\definecolor{darkblue}{rgb}{0.0,0.0,0.6}
\newcommand{\overbar}[1]{\mkern 1.5mu\overline{\mkern-1.5mu#1\mkern-1.5mu}\mkern 1.5mu}
\newcommand{\ind}{{\tau}}
\newcommand{\zth}[1]{z_{#1}}
\newcommand{\tth}[1]{t_{#1}}
\newcommand{\zcu}[1]{\overline{z}_\tau^{#1}}
\newcommand{\tcl}[1]{\underline{t}_\tau^{#1}}
\newcommand{\V}{\mathbb{V}}
\newcommand{\ovbar}[1]{\overline{#1}}
\newcommand{\ubar}[1]{\underline{#1}}
\DeclareMathAlphabet{\mathpgoth}{OT1}{pgoth}{m}{n}
\DeclareFontFamily{U}{jkpmia}{}
\DeclareFontShape{U}{jkpmia}{m}{it}{<->s*jkpmia}{}
\DeclareFontShape{U}{jkpmia}{bx}{it}{<->s*jkpbmia}{}
\DeclareMathAlphabet{\mathfrak}{U}{jkpmia}{m}{it}
\SetMathAlphabet{\mathfrak}{bold}{U}{jkpmia}{bx}{it}
\newcommand{\kindex}{\mathfrak{k}}
\begin{document}

%%%%%%%%%%%%%%%%%%%%%%%%%%%%%%%%%%%%%%%%%%%%%%%%%%%%%%%%%%%%%%%
%%%%%%%%		Hier beginnt der eigentliche Text		%%%%%%%
%%%%%%%%%%%%%%%%%%%%%%%%%%%%%%%%%%%%%%%%%%%%%%%%%%%%%%%%%%%%%%%

\title{A-priori error analysis of local incremental minimization schemes for rate-independent
evolutions
\thanks{%Submitted to the editors \today.
\funding{This research was supported by the German Research Foundation (DFG) under grant 
number~HE~6077/8-1 within the priority program Non-smooth and Complementarity-based
Distributed Parameter Systems: Simulation and Hierarchical Optimization (SPP~1962).}}}

\author{Christian Meyer\thanks{Technische Universit\"at Dortmund, Fakult\"at f\"ur Mathematik, 
 \email{cmeyer@math.tu-dortmund.de}  }
\and Michael Sievers\thanks{Technische Universit\"at Dortmund, Fakult\"at f\"ur Mathematik, 
 \email{michael.sievers@math.tu-dortmund.de} } }
 
% Custom SIAM macro to insert headers
\headers{Error analysis of local incremental minimization schemes}{Christian Meyer and Michael Sievers}

%\title{A-priori error analysis of local incremental minimization schemes for rate-independent
%evolutions\thanks{This research was supported by the German Research Foundation (DFG) under grant 
%number~HE~6077/8-1 within the priority program Non-smooth and Complementarity-based
%Distributed Parameter Systems: Simulation and Hierarchical Optimization (SPP~1962).}}
%
%\author{Christian Meyer \and Michael Sievers}
%
%
%%\institute{C.~Meyer \at
%% Technische Universit\"at Dortmund, Fakult\"at f\"ur Mathematik, 
%% Vogelpothsweg 87, 
%% 44227 Dortmund, Germany \\
%% Tel.: +49 (0)231 755 5412\\
%% Fax: +49 (0)231 755 5416\\
%% \email{cmeyer@math.tu-dortmund.de}           %  \\
%% \and
%% M.~Sievers \at
%% Technische Universit\"at Dortmund, Fakult\"at f\"ur Mathematik, 
%% Vogelpothsweg 87, 
%% 44227 Dortmund, Germany \\
%% Tel.: +49 (0)231 755 7243\\
%% Fax: +49 (0)231 755 5416\\
%% \email{michael.sievers@math.tu-dortmund.de}           %  \\
%%}

% \date{Received: date / Accepted: date}
% The correct dates will be entered by the editor

\maketitle

\begin{abstract}
This paper is concerned with a priori error estimates for the local incremental minimization scheme, which is 
an implicit time discretization method for the approximation of rate-independent systems with non-convex energies.
We first show by means of a counterexample that one cannot expect global convergence of the scheme without any 
further assumptions on the energy. For the class of uniformly convex energies, we derive error estimates of optimal order, 
provided that the Lipschitz constant of the load is sufficiently small. Afterwards, we extend this result to the case of an energy,
which is only locally uniformly convex in a neighborhood of a given solution trajectory.
For the latter case, the local incremental minimization scheme turns out to be superior compared to its global counterpart, 
as a numerical example demonstrates.
\end{abstract}

\begin{keywords}
rate independent evolutions, incremental minimization schemes, a priori error analysis, implicit time discretization, 
parameterized solutions, differential solutions
\end{keywords}
\begin{AMS}
65J08, 65K15, 65M15, 74C05, 74H15
\end{AMS}

%%%%%%%%%%%%%%% Introduction %%%%%%%%%%%%%%%%
\section{Introduction}\label{sec:intro}

This paper is concerned with a-priori error estimates for the numerical approximation of rate-independent processes. 
The system under investigation is of the form 
\begin{equation} \label{eq:subDiffInc}
0 \in \partial \RR(z^\prime(t)) + D_z\II(t,z(t)) \quad \ae \text{ in } [0,T], \tag{RIS}\end{equation}
where $\II$ denotes the energy functional and $\RR$ is a positive 1-homogeneous dissipation. 
The precise assumptions on the data are given in Section~\ref{sec:data}  below. 
The rate-independence manifests itself through the 1-homogeneity of the dissipation, 
which in fact induces that the system is invariant under time-rescaling. 
This simply means that rescaling the time in \eqref{eq:subDiffInc} results in a likewise rescaled solution.

By now, there exists a variety of different solution concepts for \eqref{eq:subDiffInc} 
being capable of handling time-discontinuities, which may occur due to non-convexity of the energy functional. 
We refer to \cite{mielkeroubi} for an overview. In this paper, we focus on the notion of \emph{parameterized solutions}. 
Loosely speaking, the main idea behind this solution concept is to parameterize the graph of an evolution satisfying
\eqref{eq:subDiffInc} by arc-length. The process is thus described in an artificial time $s$ by the following system 
\begin{equation} \left\{\qquad
\begin{gathered}
t(0) = 0, \quad z(0) = z_0, \quad t^\prime(s) + \norm{z^\prime(s)} = 1, \\
0 \in \partial \RR(z^\prime(s)) + \lambda(s)z^\prime(s) + D_z\II(t(s),z(s)) \\
\lambda(s) \geq 0, \quad \lambda(s)(1-\norm{z^\prime(s)}) = 0, 
\end{gathered} \right. 
\label{param-sol-lam} 
\end{equation} 
see \cite{efenmielke06,mielkeroubi} for details. 
Existence of solutions in the sense of \eqref{param-sol-lam} can be established in multiple ways, for instance 
by means of a vanishing viscosity analysis, see e.g.~\cite{mrs12, mrs16}. 

Another approach to show existence 
is to apply particularly chosen time discretization schemes and pass to the limit with the time step size.
A prominent example for this procedure is the so-called \emph{local incremental minimization scheme} of the form
\begin{subequations}\label{eq:locminscheme1}
\begin{align} 
 z_{k} &\in \argmin\{\II(t_{k-1},z) + \RR(z-z_{k-1}) \, : \, z \in \ZZ, \, \norm{z-z_{k-1}}_\V \leq \tau\}  \label{eq:locmin1}\\
 t_{k} &= \min\{t_{k-1}+\tau-\norm{z_{k}-z_{k-1}}_\V,T\} . \label{eq:tupdate1}
\end{align}
\end{subequations}
This approach is for instance pursued in \cite{efenmielke06} for the finite dimensional 
and in \cite{Neg14, knees17} for the infinite dimensional case. 
The authors show (weak) convergence of subsequences to solutions of \eqref{param-sol-lam} as $\tau \searrow 0$.
In \cite{fem_paramsol}, a finite element discretization is incorporated into the convergence analysis.
Moreover, as also demonstrated in \cite{fem_paramsol}, the scheme in \eqref{eq:locminscheme1} is not only 
interesting from a theoretical point of view, but can also be efficiently realized in practice 
for instance by means of a semi-smooth Newton method.
Let us mention that there exist other discretization methods to approximate parameterized solutions, 
such as relaxed local minimization schemes as proposed in \cite{acfs17} or alternating minimization schemes, 
if a second variable enters the energy functional. Moreover, time discretization and viscous regularization can be coupled 
to approximate a parameterized solution, see \cite{ks13, mrs16}. For a detailed overview, we refer to \cite{knees17}.

However, when it comes to rates of convergence for discretizations using \eqref{eq:locminscheme1}, 
the literature becomes rather scarce. Since, in case of non-convex energies, the (parameterized) solution of \eqref{eq:subDiffInc} 
is in general not unique, not even locally, as there might be a whole continuum of solutions, one can in general hardly expect any 
a priori estimates. The situation changes, if one turns to \emph{uniformly convex energies}. 
In this case, however, there is no need for a localized scheme as in \eqref{eq:locminscheme1} so that one can drop the additional constraint 
in \eqref{eq:locmin1} and simply use the a time-update of the form  $t_{k} = t_{k-1}+\tau$. 
The method arising in this way is called \emph{global incremental minimization scheme} and can be shown to converge to the 
\emph{global energetic solution}, which is unique in case of a uniformly convex energy.
Even more, in \cite{mielketheil,mpps:errorRIS}, the authors show that the error between the discrete solution of this scheme
and the global energetic solution is of order $\OO(\sqrt{\tau})$.
This result has been improved in \cite{mielke:ERIS} and, more generally, in \cite{bartels:errorEst} to rates of order $\Landau{\tau}$
for the case of a quadratic and coercive energy. 
An energy functional with these properties arises for instance in case of quasi-static elastoplasticty with linear kinematic hardening, 
where several convergence results have been obtained by various authors, see e.g.~\cite{hanreddy, AC:NumAnElastoPlast} and the references therein. 
Recently, in \cite{RinSchwarzSueli17}, the authors provide an a priori error estimate for the global minimization scheme 
in case of a semilinear and uniformly convex energy including a spatial discretization.

By contrast, to the best of our knowledge, there exists no such convergence results 
for the local incremental minimization scheme in \eqref{eq:locminscheme1}, even not in the case of a uniformly convex energy. 
With the present paper, we aim to fill this gap. Moreover, 
we provide an a priori estimate, if the energy functional is only \emph{locally uniformly convex} along a given solution trajectory. 
At this point, the local incremental minimization scheme turns out to be superior to the global one, since the latter does in general not satisfy
such an a priori estimate as we will demonstrate by means of a counterexample. 
In summary, the overall picture concerning the local incremental minimization scheme now looks as follows:
\begin{itemize}
    \item For an arbitrary non-convex energy, there exists a subsequence of discrete solutions that converges (weakly) to a parameterized solution
    as $\tau \searrow 0$.
    \item If the energy is locally uniformly convex along a solution trajectory, then the discrete solution converges with optimal rate 
    to this solution, provided that the time step size is sufficiently small.
    \item If the energy is uniformly convex, one obtains the same convergence rates as for the global incremental minimization scheme.
\end{itemize}

%A major problem here is that there exists a certain freedom of choice in the parameterization of solutions of \eqref{param-sol-lam}. Moreover, solutions are only defined in an artificial time. In order to treat these problems, we will retransform the \emph{parametrized solution} back into the physical time and examine the convergence behavior there. Therewith we will be able to show an a-priori estimate of the type $\Landau{\tau}$ in case of semilinear energies.\\

The paper is organized as follows. In Section~\ref{sec:assumptions}, we lay the foundations for our 
a priori error analysis. We present our standing assumptions, the solution concepts for \eqref{eq:subDiffInc} underlying 
our analysis, and the local incremental minimization scheme in a rigorous manner. 
The section ends with a simple one-dimensional example which shows that one can indeed not expect 
any convergence result for the whole sequence of discrete solutions without any further assumption on the energy 
such as (local) uniform convexity.
The third section is then devoted to the derivation of our a priori estimates. 
In the first subsection, we provide some basic estimates that are frequently used throughout the convergence analysis. 
In Sections~\ref{sec:GlobConvLip} and \ref{sec:GlobConvGen}, it is assumed that the energy is (globally) uniformly convex.
We start our a priori analysis with an additional assumption saying that the driving force is Lipschitz continuous with a sufficiently small Lipschitz constant. 
In Section~\ref{sec:GlobConvGen}, we then drop the smallness assumption on the Lipschitz constant. 
It is to be noted that, in this case, we do not obtain the optimal order of convergence, see Remark~\ref{rem:notoptimal} below.
Finally, Section~\ref{sec:LocConv} is concerned with the a priori analysis in case of locally uniformly convex energies. 
The numerical experiments in Section~\ref{sec:tests} illustrate our theoretical findings.

%%%%%%%%%%%%%%%%%%%%% Notations and Assumptions %%%%%%%%%%%%%%%%%%%%%%%%%%%%%
\section{Notation and standing assumptions}\label{sec:assumptions}

Let us start with some basic notation used throughout the paper. 
Unless indicated, $C>0$ always is a generic constant. 
Moreover, given two normed linear spaces $X, Y$, we denote by $\dual{\cdot}{\cdot}_{X^*, X}$ 
the dual pairing and suppress the subscript, if there is no risk for ambiguity. 
By $\|\cdot\|_X$, we denote the norm in $X$ and $\LL(X,Y)$ is the space of linear and bounded 
operators from $X$ to $Y$. Furthermore, $B_X(x,r)$ is the open ball in $X$ around $x\in X$ 
with radius $r>0$.

%%%%%%%%%%%%%%%%%%%%%%%%%%%%%%%%%%%%%%%%%%%%
\subsection{Assumptions on the data}\label{sec:data}

Let us now introduce the assumptions on the quantities in \eqref{eq:subDiffInc}. 

%%%%%%%%%%%%%%%%%%%%%%%%%%%%%%%%%%%%%%%%%%%%
\paragraph{Spaces}
Throughout the paper, $\XX $ is a Banach space and $\ZZ,\VV$ are Hilbert spaces such that
$\ZZ \overset{c,d}{\embeds} \VV \embeds \XX$,
where $\overset{d}{\embeds}$ and $\overset{c}{\embeds}$ refer to dense and compact embedding, respectively. For convenience, we will assume w.l.o.g. that the embedding constant $c_\ZZ$ of $\ZZ \to \VV$ fulfills $c_\ZZ =1$. Otherwise only the constants in the corresponding estimates will change. 
For the same reason, we will use the natural norm in $\VV$ rather than an equivalent one as carried out in \citep{knees17}. 
The Riesz isomorphism associated with $\VV$ is denoted by $J_\VV: \VV \to \VV^*$. 

%%%%%%%%%%%%%%%%%%%%%%%%%%%%%%%%%%%%%%%%%%%%
\paragraph{Energy}
For the energy functional we require that $\II$ has the following semilinear form:
\begin{equation*} 
\II : [0,T] \times \ZZ \to \R, \quad \II(t,z)=\frac{1}{2}\dual{A z}{z}_{\ZZ^\ast,\ZZ} + \FF(z) - \dual{\ell(t)}{z}_{\VV^*, \VV} \, .
\end{equation*}
wherein $A \in \LL(\ZZ, \ZZ^*)$ is a self-adjoint and coercive operator, i.e., there is a constant $\alpha > 0$ 
such that $\dual{Az}{z}_{\ZZ^*, \ZZ} \geq \alpha \|z\|_{\ZZ}^2$. In addition, we assume that 
$\ell \in C^{0,1}([0,T];\VV^*) $ and $\FF \in C^2(\ZZ;\R)$ with $\FF \geq 0$ 
and write $| \ell |_{Lip}$ for the Lipschitz constant. The restriction of $\ell(\cdot)$ to a functional on $\ZZ$ is, for convenience, denoted by the same symbol. 
%By setting
%$\beta := (c_\ZZ\norm{\ell}_{C([0,T];\VV^*)} + 1)/(4\alpha)$, where $c_\ZZ>0$ 
%denotes the embedding constant of $\ZZ\embeds \VV$, we obtain the estimate
%\[ \II(t,z) \geq \alpha \norm{z}_\ZZ^2 - c_\ZZ\,\norm{\ell}_{C([0,T];\VV^*)} \norm{z}_\ZZ 
%\geq \norm{z}_\ZZ - \beta \qquad \forall\, t \in [0,T], \, z \in \ZZ \, , \]
%which in turn implies that 
%\[ \abs{\partial_t \II(t,z)} \leq c_\ZZ\,\norm{\ell}_{C^1([0,T];\VV^*)} \norm{z}_\ZZ 
%\leq \mu (\II(t,z) + \beta) \qquad \forall t \in [0,T], \, z \in \ZZ \, , \]
%with $\mu := c_\ZZ\,\norm{\ell}_{C^1([0,T];\VV^*)}$. 
%Gronwall's lemma thus gives for all $t,s \in [0,T], \, z \in \ZZ$ that
%\begin{align}
% \II(t,z) + \beta & \leq (\II(s,z)+\beta) \exp(\mu \abs{t-s}) \label{eq:gronwall1} \\
% \text{ and } \quad \abs{\partial_t \II(t,z)} &\leq \mu (\II(s,z)+\beta) \exp(\mu \abs{t-s}) , 
% \label{eq:gronwall2}
%\end{align}

For the non-quadratic part, we assume that $\FF$ is of lower order compared to $A$ which means that
\begin{equation}\label{eq:assuF}
 D_z \FF \in C^1(\ZZ,\VV^\ast) , 
 \quad \norm{D^2_z \FF(z)v}_{\VV^\ast} \leq C_\FF (1+\norm{z}_\ZZ^q)\norm{v}_\ZZ 
\end{equation}
for some $q \geq 1$ so that, for every $z\in \ZZ$, $D_z\FF(z)$ can uniquely be extended 
to a bounded and linear functional on $\VV$, which we again denote by the same symbol for convenience. 

Moreover, we additionally assume that $\II(t,\cdot) \in C^{2,1}_{loc}(\ZZ;\R)$, 
that is to say, for all $r>0$ there exists $C(r) \geq 0$ such that for all $z_1,z_2 \in B_\ZZ(0,r)$ it holds
\begin{equation}\label{ass:regularity_I}  
\dual{\big[D_z^2\II(t,z_1)-D_z^2\II(t,z_2)\big]v}{v}_{\ZZ^\ast,\ZZ} \leq C(r) \norm{z_1-z_2}_\ZZ \norm{v}_\ZZ^2 . 
\end{equation}
Note that, due to the structure of the energy functional $\II$, the constant $C(r)$ does not depent on the time $t$ and, moreover, this assumptions holds iff $\FF \in C^{2,1}_{loc}(\ZZ;\R)$.
Lastly, we require $\II$ to be (at least locally) uniformly convex, see \cref{ass:globconv} and \cref{ass:locconv} below, which we will indicate at the appropriate places.

%\begin{Assumption}{Assumption C}[regularity of $\II$]\label{ass:regularity_I}\\
%We say that $\II(t,\cdot) \in C^{2,1}_{loc}(\ZZ;\R)$, if for all $r>0$ there exists $C(r) \geq 0$ such that for all $z_1,z_2 \in B_\ZZ(0,r)$ it holds
%\[  \dual{\big[D_z^2\II(t,z_1)-D_z^2\II(t,z_2)\big]v}{v}_{\ZZ^\ast,\ZZ} \leq C(r) \norm{z_1-z_2}_\ZZ \norm{v}_\ZZ^2 . \] 
%\end{Assumption}
%Note that, due to the structure of the energy functional $\II$, the constant $C(r)$ does not depent on the time $t$ and, moreover, \ref{ass:regularity_I} holds iff $\FF \in C^{2,1}_{loc}(\ZZ;\R)$.

%Moreover, both $\FF : \ZZ \to \R$ and $D_z \FF : \ZZ \to \ZZ^\ast$ are 
%supposed to be weak-weak continuous.
%This in particular guarantees that $\II(t,\cdot)$ is weakly lower-semicontinuous. 

%Lastly, the main assumption in order to show some a-priori error bounds is the uniform convexity of the functional $\II(t,\cdot)$, i.e.
%\begin{Assumption}{Assumption A}[$\kappa$-uniform convexity]\label{ass:globconv}\\
%We say that $\II$ is \emph{$\kappa$-uniform convex}, if for all $t \in [0,T]$ it holds \[ \dual{D_z\II(t,z_2)-D_z\II(t,z_1)}{z_2-z_1}_{\ZZ^*,\ZZ} \geq \kappa \norm{z_2-z_1}_\ZZ^2 \qquad \forall z_1,z_2 \in \ZZ \] 
%for some $\kappa>0$. 
%\end{Assumption}
%Again, due to the structure of $\II$ the $\kappa$-uniform convexity is not depending on the time. Thus it suffices to require that $ z \mapsto \dual{Az}{z}+\FF(z)$ is $\kappa$-uniform convex.

%%%%%%%%%%%%%%%%%%%%%%%%%%%%%%%%%%%%%%%%%%%%
\paragraph{Dissipation} 
In the following, we denote by $\RR$ the dissipation potential and assume $\RR : \VV \to [0,\infty)$ 
to be lower semicontinuous, convex, and positively homogeneous of degree one. 
Moreover, we require the dissipation to be bounded, i.e., there exist constants $\underline{\rho},\overline{\rho}>0$ such that, 
for all $v\in \VV$ there holds $\underline{\rho} \norm{v}_\XX \leq \RR(v) \leq \overline{\rho} \norm{v}_\VV$.  
Since $\RR$ is convex and l.s.c., it is locally Lipschitz continuous so that its subdifferential is bounded for every point of the domain. 

\paragraph{Initial data} Finally we assume that the initial state $z_0$ satisfies $z_0 \in \ZZ$ and 
$0 \in \partial \RR(0) + D_z \II(0,z_0)$, i.e., $z_0$ is locally stable.

%%%%%%%%%%%%%%%%%%%%%%%%%%%%%%%%%%%%%%%%%%%%%
%\paragraph{Initial state}\ \\
%\noindent
%The initial value $z_0$ is supposed to satisfy $z_0\in \ZZ$ and $Az_0 \in \VV^*$.

%%%%%%%%%%%%%%%%%%%%%%%%%%%%%%%%%%%%%%%%%%%%
\subsection{Solution Concepts}

We now turn to our notion of solutions and give a rigorous definition thereof. 
For a broad overview over the various solution concepts for rate independent systems, we refer to \cite{mielke11differential,mielkeroubi} and the references therein.

\begin{definition}\label{def:diffsol}
We call $z : [0,T] \to \ZZ$ a \emph{differential solution} of the rate-independent system \eqref{eq:subDiffInc}, 
if $z \in W^{1,1}(0,T;\ZZ)$ with $z(0) = z_0$ and $0 \in \partial\RR(z^\prime(t)) + D_z\II(t,z(t))$ f.a.a.\ $t \in [0,T]$. 
\end{definition}

Due to the $1$-homhogeneity of $\RR$, it holds $\partial\RR(v) \subset \partial\RR(0)$ for all $v \in \VV$. 
Thus, since $W^{1,1}(0,T;\ZZ) \embeds C(0,T;\ZZ)$ and $D_z\II$ is continuous, 
a differential solutions fulfills $0 \in \partial\RR(0) + D_z\II(t,z(t))$ for all $t \in [0,T]$. 
The set $\mathcal{S}(t) := \{ z \in \ZZ \, : \,  0 \in \partial\RR(0) + D_z\II(t,z) \}$ is often called \emph{set of local stability}. 
Accordingly, a state $ z \in \mathcal{S}(t) $ is called \emph{locally stable}. 
The notion of a differential solution plays a crucial role in our error analysis. In case of a (globally) uniformly convex energy, 
one can prove that such a solution exists and is unique, see~Appendix~\ref{sec:diffexistence}.

As indicated above, there exists multiple other notions of solutions for \eqref{eq:subDiffInc}, among them 
\emph{(global) energetic solutions} and \emph{parameterized solutions}. These two solution concepts will appear in context of our 
numerical examples. They come into play, when one drops the uniform convexity assumption on the energy.
In the non-convex case, 
both solution concepts are especially essential in the context of incremental minimization time stepping schemes, 
as (weak) limits of the sequence of iterates are precisely of this type. 
To be more precise, weak accumulation points of the local scheme in \eqref{eq:locminscheme1} for $\tau \searrow 0$ 
are parameterized solutions, whereas weak accumulation points of its global counterpart 
(where the additional inequality constraint in \eqref{eq:locmin1} is dropped and the time update is just $t_{k+1} = t_k + \tau$)
are global energetic solutions.
For a precise definition of these two solution concepts and the convergence analysis in case of non-convex energies, 
we refer to \cite{knees17} and the references therein. 
Since only differential solutions will appear in our a priori analysis, we do not go into further details concerning 
the other notions of solutions.

%%%%%%%%%%%%%%%%%%%%%%%%%%%%%%%%%%
\subsection{Local Minimization Algorithm}
In \citep{efenmielke06}, an implicit time stepping scheme based on a local minimization of dissipation plus energy was proposed to approximate parametrized solutions. This algorithm serves as a basis for our a priori analysis. Its iterates are determined by
\begin{subequations}\label{eq:locminscheme}
\begin{align} 
 z_{k} &\in \argmin\{\II(t_{k-1},z) + \RR(z-z_{k-1}) \, : \, z \in \ZZ, \, \norm{z-z_{k-1}}_\VV \leq \tau\}  \label{eq:locmin} \\
 t_{k} &= \min\{t_{k-1}+\tau-\norm{z_{k}-z_{k-1}}_\VV,T\} . \label{eq:tupdate} 
\end{align}
\end{subequations}
Note that the iterates implicitly depend on the choice of $\tau$. Nevertheless, we will omit any indexing of $t_k$ and $z_k$ for the sake of better readibility.
Now, for every $\tau>0$, we know from \citep{knees17} that this algorithm reaches the final time $T$ in a finite number of iterations (depending on $\tau$) which we will denote by $N(\tau)$. 
Moreover, by definition of $z_k$ as a solution of \eqref{eq:locmin}, it satisfies the 
necessary optimality conditions
\begin{equation}\label{eq:eq.aux004}
0 \in \partial (\RR+I_\tau)(z_{k}-z_{k-1}) + D_z\II(t_{k-1}, z_k),
\end{equation}
where $I_\tau: \VV \to [0,\infty]$ denotes the 
indicator functional associated with the constraint $v\in \overline{B_{\VV}(0,\tau)}$.
From \eqref{eq:eq.aux004}, we obtain the following optimality system:

%% lemma optimality system
\begin{lemma}[Discrete optimality System]\label{prop.optimalityprops} 
Let $k\geq 1$ and $\zth{k}$ be an arbitrary solution of \eqref{eq:locmin} 
with associated $\tth{k}$ given by \eqref{eq:tupdate}. 
Then the following optimality properties are satisfied:
There exists a Lagrange multiplier $\lambda_k \geq 0$ such that
\begin{subequations}
\begin{gather}
 \lambda_k(\norm{\zth{k}-\zth{k-1}}_\VV-\tau) = 0 \label{eq:opt.prop01} \\
 \tau \dist_{\VV^\ast}\{-D_z\II(\tth{k-1},\zth{k}),\partial \RR(0)\} 
 = \lambda_k \norm{\zth{k}-\zth{k-1}}_\VV^2 \label{eq:opt.prop02} \\
 \left\{\quad
 \begin{aligned}
  \RR(\zth{k}-\zth{k-1}) + \tau \dist_{\VV^\ast}\{-D_z\II(\tth{k-1},\zth{k}),\partial \RR(0)\}
  \qquad\quad & \\
  = \dual{-D_z\II(\tth{k-1},\zth{k})}{\zth{k}-\zth{k-1}}_{\ZZ^\ast,\ZZ} &
 \end{aligned} \right. \label{eq:opt.prop03} \\
 \RR(v) \geq - \dual{\lambda_k J_\VV(\zth{k}-\zth{k-1})
 + D_z\II(\tth{k-1},\zth{k})}{v}_{\VV^\ast,\VV} \quad\forall v\, \in \VV. \label{eq:opt.prop04} 
\end{gather}
\end{subequations}
\end{lemma}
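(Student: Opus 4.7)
The starting point is the necessary optimality inclusion \eqref{eq:eq.aux004}. The plan is to split the sum subdifferential, read off the multiplier from the ball constraint, and then exploit the positive one-homogeneity of $\RR$ to extract the four identities.

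Since $\RR$ is convex and finite on all of $\VV$ (in particular continuous at every point of the closed $\tau$-ball where $I_\tau$ is finite), the Moreau--Rockafellar sum rule applies and gives
\[
\partial(\RR+I_\tau)(\zth{k}-\zth{k-1}) \;=\; \partial\RR(\zth{k}-\zth{k-1}) \;+\; \partial I_\tau(\zth{k}-\zth{k-1}).
\]
The second summand is the normal cone of the closed $\tau$-ball in the Hilbert space $\VV$, which is $\{0\}$ inside the ball and $\{\lambda J_\VV(\zth{k}-\zth{k-1}) : \lambda\geq 0\}$ on its boundary. This directly produces a multiplier $\lambda_k\geq 0$ fulfilling the complementarity \eqref{eq:opt.prop01} and the inclusion $\xi := -D_z\II(\tth{k-1},\zth{k}) - \lambda_k J_\VV(\zth{k}-\zth{k-1}) \in \partial\RR(\zth{k}-\zth{k-1})$.

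I would now invoke two standard consequences of the positive one-homogeneity of $\RR$: (i) $\partial\RR(v)\subseteq\partial\RR(0)$ for every $v\in\VV$, and (ii) the Fenchel identity $\RR(v)=\dual{\eta}{v}$ whenever $\eta\in\partial\RR(v)$. Property (i) applied to $\xi$ yields, by the defining inequality of the subdifferential at $0$, the variational inequality \eqref{eq:opt.prop04} for every test function $v\in\VV$. Property (ii) evaluated at $\xi$ and $v=\zth{k}-\zth{k-1}$ gives
\[
\RR(\zth{k}-\zth{k-1}) \;=\; \dual{-D_z\II(\tth{k-1},\zth{k})}{\zth{k}-\zth{k-1}} \;-\; \lambda_k\norm{\zth{k}-\zth{k-1}}_\VV^2,
\]
which, once the equality \eqref{eq:opt.prop02} is in place, is exactly \eqref{eq:opt.prop03}.

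It remains to establish \eqref{eq:opt.prop02}. The inclusion $\xi\in\partial\RR(0)$ directly yields the upper bound $\dist_{\VV^\ast}\{-D_z\II(\tth{k-1},\zth{k}),\partial\RR(0)\}\leq\lambda_k\norm{\zth{k}-\zth{k-1}}_\VV$. For the matching lower bound, I use that $\partial\RR(0)$ is closed and convex with support function equal to $\RR$ (by one-homogeneity); testing the dual norm $\norm{-D_z\II(\tth{k-1},\zth{k})-\zeta}_{\VV^\ast}$ against $\zth{k}-\zth{k-1}/\norm{\zth{k}-\zth{k-1}}_\VV$ and taking the infimum over $\zeta\in\partial\RR(0)$ gives
\[
\dist_{\VV^\ast}\{-D_z\II(\tth{k-1},\zth{k}),\partial\RR(0)\}\norm{\zth{k}-\zth{k-1}}_\VV \;\geq\; \dual{-D_z\II(\tth{k-1},\zth{k})}{\zth{k}-\zth{k-1}} - \RR(\zth{k}-\zth{k-1}),
\]
whose right-hand side equals $\lambda_k\norm{\zth{k}-\zth{k-1}}_\VV^2$ by the Fenchel identity above. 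Multiplying by $\tau$ and using \eqref{eq:opt.prop01} to replace one factor of $\norm{\zth{k}-\zth{k-1}}_\VV$ by $\tau$ whenever $\lambda_k>0$ closes the argument. The only delicate point is the case distinction on $\lambda_k$: if $\lambda_k=0$ one has $\xi=-D_z\II(\tth{k-1},\zth{k})\in\partial\RR(0)$ so that both sides of \eqref{eq:opt.prop02} vanish; otherwise \eqref{eq:opt.prop01} forces $\norm{\zth{k}-\zth{k-1}}_\VV=\tau$, and the upper and lower bounds combine to an equality. All remaining steps are routine convex-analytic manipulations.
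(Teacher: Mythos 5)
Your argument is correct and is essentially the proof one finds in the references the paper cites for this lemma (\cite{knees17,fem_paramsol}): split off the normal cone of the $\VV$-ball to obtain $\lambda_k$ and \eqref{eq:opt.prop01}, then use $\partial\RR(v)\subset\partial\RR(0)$ and the Euler identity $\RR(v)=\dual{\eta}{v}$ for $\eta\in\partial\RR(v)$ to get \eqref{eq:opt.prop04} and \eqref{eq:opt.prop03}, and finally identify $\lambda_k\norm{z_k-z_{k-1}}_\VV$ with the distance to $\partial\RR(0)$ by a two-sided estimate. The only point you pass over is that the minimization in \eqref{eq:locmin} is posed over $\ZZ$, so the raw first-order condition a priori lives in $\ZZ^\ast$ (the term $Az_k$ need not define an element of $\VV^\ast$); your application of the sum rule and the normal-cone formula in $\VV$ presupposes that \eqref{eq:eq.aux004} already holds in $\VV^\ast$, and establishing this extra regularity of $D_z\II(t_{k-1},z_k)$ (via the $\VV$-Lipschitz bound $\RR\leq\overline{\rho}\norm{\cdot}_\VV$ and density of $\ZZ$ in $\VV$) is precisely where the cited proofs do their non-routine work. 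Granting that reading of \eqref{eq:eq.aux004}, the remaining steps, including your case distinction on $\lambda_k$, are all sound.
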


For a proof of this statement, see \cite{knees17} or \cite{fem_paramsol}. 
Note that \eqref{eq:opt.prop02}--\eqref{eq:opt.prop04} and the 1-homogeneity of $\RR$ imply
\begin{equation}\label{eq:discrOptCon_DiffInc}
0 \in \partial\RR(z_k-z_{k-1}) + \lambda_k J_\VV (z_k-z_{k-1}) + D_z\II(t_{k-1},z_k) .
\end{equation}
In addition, \eqref{eq:opt.prop01} and \eqref{eq:opt.prop02} give
\begin{equation}\label{eq:lambda_dist_expression}
\lambda_k = \frac{1}{\tau} \dist_{\VV^\ast}\{-D_z\II(\tth{k-1},\zth{k}),\partial \RR(0)\} .
\end{equation}

\begin{remark}\label{rem:hatN}
In order to keep the following arguments concise, we will proceed the iteration for $t_{N(\tau)} = T$, until we find $z_{N(\tau)+n} \in \ZZ$, which is locally stable again, i.e., $0 \in \partial\RR(0) + D_z\II(t_{N(\tau)},z_{N(\tau)+n})$.
In \cref{lem:maxNum_noSteps} and \cref{lem:timeprogress} below, we will see that, under suitable assumptions, this condition is fulfilled after a finite number of steps, which is bounded independent of $\tau$. Eventually we denote $\hat{N}(\tau) := N(\tau)+n$.
\end{remark}

\begin{remark}\label{rem:z1=z0}	
Due to the convexity of $\II(t,\cdot)$ and the assumption on the initial state $z_0$, i.e., $0 \in \partial\RR(0) + D_z\II(t_0,z_0)$, 
there holds $\II(0,z_0) \leq \II(0,z) + \RR(z-z_0)$ for all $z \in \ZZ$
so that $z_1 = z_0$ is the unique minimizer of \eqref{eq:locmin}
and consequently, the first iterate of the local minimization algorithm always equals the initial state. 
This also entails $t_1 - t_0 = \tau$. We will use this fact at some places of the paper.
Note that the uniform convexity of  $\II(t_0,\cdot)$ on $B_\ZZ(z_0,\tau)$ is perfectly sufficient for the above argument, which will become important 
in Section~\ref{sec:LocConv} below.
\end{remark}

%%%%%%%%%%%%%%%%%%%%%%%%%%%%%%%%%%%%%%
\subsection{A Counterexample in Case of a Non-Convex Energy}

Before we continue our error analysis, let us take a look at a first numerical example for the local minimization algorithm, which illustrates that on cannot expect any convergence result going beyond \citep{knees17,fem_paramsol} without further assumptions. For this example, we set $\ZZ=\VV=\XX=\R$ as well as:
\begin{align}
\RR(v) = \abs{v} \quad \text{and} \quad \II(t,z) = \frac{1}{2} z^2 + \FF(z) - \ell(t)z \label{eq:Energ_locConv_1D_1}
\end{align}
with
\begin{align*}
&\FF(z) = \left\{ \begin{array}{rc}
2z^3 - 5/2\, z^2 +1 & , z \geq 0 \\
-2z^3 - 5/2 \, z^2 +1 & , z<0
\end{array} \right. \quad \text{and} \quad  \ell(t) = -24(t-1/4)^2+5/3  .
\end{align*}
The fact that the energy functional is not (strictly) convex induces that solutions are in general not unique. However, it is a priori not clear, whether the discrete approximations converge to some particular parameterized solution (potentially even with some rate) or not. The following example demonstrates that this is in general \emph{not} the case. For $z_0 = -1/3$ straight forward calculations show that 
\[ z_1(t) \equiv -1/3 \quad \text{and} \quad 
 z_2(t) = \left\{
\begin{array}{rl}
-1/3 & ,t  \in [0,1/4) \\
1/3(1+\sqrt{2})  & ,t \in [1/4,1/2] 
\end{array} \right. \]
are solutions of the rate-independent system \eqref{eq:Energ_locConv_1D_1}. The numerical results depicted in Figure~\ref{fig:nonUnique_1D} show that, although $z_2$ is continuous, the discrete solution either approximates $z_1$ or $z_2$ depending on the choice of $\tau$. Consequently, as indicated above, without any form of (uniform) convexity of the energy-functional, it is not clear, if any of the solutions is preferred by the algorithm. In addition an a priori error estimate can hardly be expected. As a consequence of this example, we will impose additional assumptions on the energy to derive a priori error estimates. First we will assume that the energy is uniformly convex (Sections~\ref{sec:GlobConvLip} \& \ref{sec:GlobConvGen}) and later on generalize our results for the case of locally uniformly convex energies (Section~\ref{sec:LocConv}). 
%\begin{figure}
%      \includegraphics[width=\textwidth]{bilder/nonUnique1D_edit_compressed.png} 
%      \caption{Approximations of two different parameterized solutions. Depending on the choice of $\tau$ either of two solutions is approximated. The set of local stability, i.e. $\cup_{ t \in [0,0.5] } \mathcal{S}(t) $, is depicted in gray.} 
%      \label{fig:nonUnique_1D} 
%   \centering 
%\end{figure}
\begin{figure}[h!]
  \centering   
  \begin{overpic}[scale=0.5]{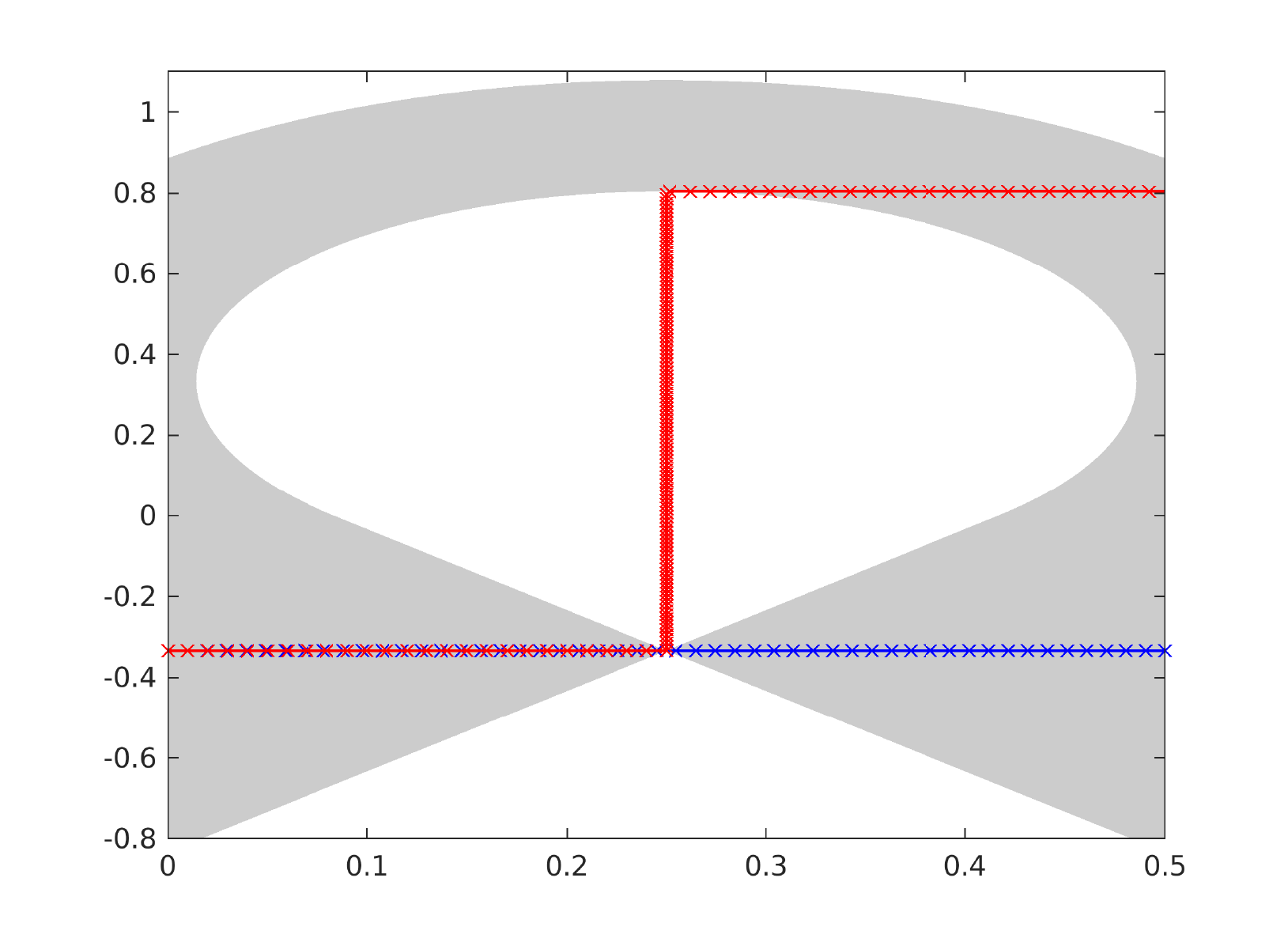}
     \put(69.4,29.75){\frame{\includegraphics[trim={140 100 120 100},clip,scale=0.41]{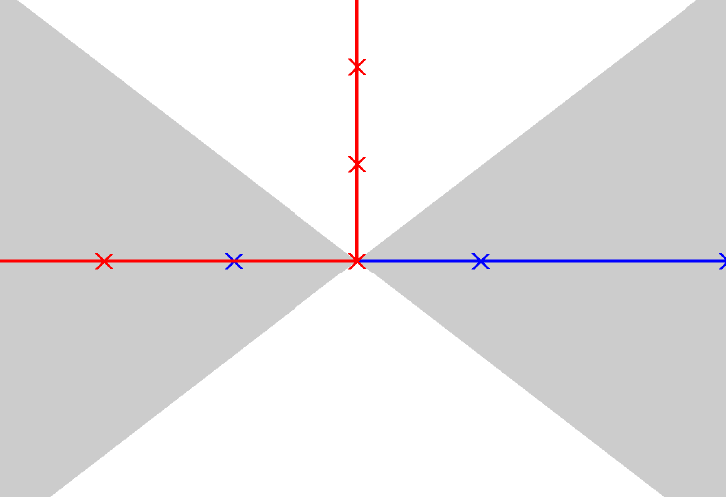}}}  
     \put(49,21){\frame{\makebox(5,3){\,}}}
     \put(49,24){\line(2,3){20.38}}
     \put(54,21){\line(6,1){51.76}}
  \end{overpic}
\caption{Approximations of two different parameterized solutions. Depending on the choice of $\tau$ either of two solutions is approximated. The set of local stability, i.e. $\cup_{ t \in [0,0.5] } \mathcal{S}(t) $, is depicted in gray.} 
     \label{fig:nonUnique_1D} 
\end{figure}

%The reason for this behavior is that the area in which the initial state $z_0$ is stable shrinks down to a single point in a linear way, as can be seen in the Figure~\ref{fig:nonUnique_1D}. Here the region of local stability, i.e. $S_{loc} = \{ z \in \ZZ\, : \, 0 \in \partial\RR(0) + D_z\II(t,z)\}$, is coloured in gray. Thus depending on the choice of $\tau$ the iterates either simply jump over the "singularity" (see blue indicated iterates in Figure~\ref{fig:nonUnique_1D}) or hit the region where $z_0$ is not locally stable anymore.

%%%%%%%%%%%%%%%%%%%%%%%%%%%%%%%%%%%%%%%%%%%
% Convergence rates global case
%%%%%%%%%%%%%%%%%%%%%%%%%%%%%%%%%%%%%%%%%%%
\section{A Priori Error Estimates}\label{sec:AprioriEstimates}

As mentioned above, the first part of our error analysis is based on the following

\begin{assumption}[$\kappa$-uniform convexity]\label{ass:globconv}\\
We say that $\II$ is \emph{$\kappa$-uniformly convex}, if there exists a $\kappa > 0$ such that, for all $t \in [0,T]$ and all $z,v \in \ZZ$, 
it holds $\dual{D^2_z\II(t,z)v}{v}_{\ZZ^*,\ZZ} \geq \kappa \norm{v}_\ZZ^2$. 
\end{assumption}

It is to be noted that, due to the structure of $\II$, the $\kappa$-uniform convexity is not depending on the time. 
Thus it suffices to require that $ z \mapsto \dual{Az}{z}+\FF(z)$ is $\kappa$-uniform convex. 
This property especially implies that 
\[ \dual{D_z\II(t,z_2)-D_z\II(t,z_1)}{z_2-z_1}_{\ZZ^*,\ZZ} \geq \kappa \norm{z_2-z_1}_\ZZ^2 \qquad \forall z_1,z_2 \in \ZZ . \] 
Later on, in Section~\ref{sec:LocConv}, we will relax this assumption and turn to locally uniformly convex energies, 
see Assumption~\ref{ass:locconv} below.

Before we start with our error analysis, we derive several auxiliary results that are frequently used throughout the whole paper.

\subsection{Basic Estimates}

In this section, we provide some basic estimates which will be useful for the error analysis in the upcoming subsections.

\begin{lemma}[Uniform a-priori estimate for iterates] \label{lem.basicest3}
The iterates of Algorithm~\ref{eq:locminscheme} fulfill $\sup_{\tau>0,\, k\in\N} \norm{\zth{k}}_\ZZ < \infty$. 
\end{lemma}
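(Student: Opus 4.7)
The plan is to exploit the minimality of $z_k$ in \eqref{eq:locmin} with the trivial competitor $z_{k-1}$, telescope the resulting energy decrease along the iteration, and close the estimate via coercivity of $A$.

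Since $\|0\|_\VV = 0 \leq \tau$, the state $z_{k-1}$ is itself admissible in the $k$-th minimization problem, which gives
\[\II(t_{k-1}, z_k) + \RR(z_k - z_{k-1}) \leq \II(t_{k-1}, z_{k-1}).\]
Because only the load $\ell$ carries the time dependence of $\II$, one has $\II(t_{k-1}, z_{k-1}) = \II(t_{k-2}, z_{k-1}) - \dual{\ell(t_{k-1}) - \ell(t_{k-2})}{z_{k-1}}_{\VV^\ast,\VV}$. Iterating this identity down to the initial step and invoking $z_1 = z_0$ from Remark~\ref{rem:z1=z0} (so that $\II(t_0, z_1) = \II(0, z_0)$) produces the telescoped inequality
\[\II(t_{k-1}, z_k) + \sum_{j=2}^{k} \RR(z_j - z_{j-1}) \leq \II(0, z_0) - \sum_{j=2}^{k} \dual{\ell(t_{j-1}) - \ell(t_{j-2})}{z_{j-1}}_{\VV^\ast,\VV}.\]

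Next, I would bound both sides separately. The left-hand side admits a lower bound via coercivity of $A$, nonnegativity of $\FF$ and $\RR$, Young's inequality and the normalization $c_\ZZ = 1$, yielding $\II(t_{k-1}, z_k) \geq \frac{\alpha}{4}\|z_k\|_\ZZ^2 - C_0$ for a constant $C_0$ depending only on $\alpha$ and $\|\ell\|_{L^\infty([0,T];\VV^\ast)}$. The right-hand side is controlled by the Lipschitz continuity of $\ell$ combined with $t_j - t_{j-1} \in [0,\tau]$ and $\sum_j (t_j - t_{j-1}) \leq T$ (the extended iterates of Remark~\ref{rem:hatN} are stationary in time and therefore contribute nothing), which delivers
\[-\sum_{j=2}^{k} \dual{\ell(t_{j-1}) - \ell(t_{j-2})}{z_{j-1}}_{\VV^\ast,\VV} \leq |\ell|_{Lip}\, T \max_{1 \leq j < k} \|z_j\|_\ZZ.\]

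Setting $m_K := \max_{1 \leq j \leq K}\|z_j\|_\ZZ$ for arbitrary $K$ and taking the maximum of the combined inequality over $k \leq K$ produces the quadratic inequality $\tfrac{\alpha}{4} m_K^2 \leq C_1 + |\ell|_{Lip}\, T\, m_K$, where $C_1 = C_0 + \II(0, z_0)$ is independent of both $\tau$ and $K$. The positive root of the associated quadratic yields a uniform bound $m_K \leq M^\ast$ that depends neither on $K$ nor on $\tau$, which is exactly the claim. The only subtle step is this last one: because the Lipschitz bound is linear in $\max_j\|z_j\|_\ZZ$, one cannot close the argument pointwise in $k$ but must pass to the running maximum and solve the resulting quadratic — a routine maneuver but the one place where care is needed.
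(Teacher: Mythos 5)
Your proof is correct and is essentially the standard energy--dissipation argument that the paper delegates to \cite{knees17} and \cite{fem_paramsol}: testing the minimality of $z_k$ with the admissible competitor $z_{k-1}$, telescoping the energies with the load increments as the only remainder, bounding below by coercivity of $A$ and $\FF\geq 0$, and closing via the quadratic inequality in the running maximum. The points that need care --- passing to $m_K$ rather than arguing pointwise in $k$, using $\sum_j(t_{j-1}-t_{j-2})=t_{k-1}-t_0\leq T$, and noting that the extended iterates of Remark~\ref{rem:hatN} contribute no load terms --- are all handled correctly.
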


%% Proof
\begin{proof}
see \citep{knees17} or \cite{fem_paramsol}.
\end{proof}
%\begin{proof}
%Using $\FF \geq 0$ and the coercivity of $A$, Young's inequality implies 
%for every $t\in [0,T]$ and every $z \in \ZZ$ that
%\begin{align*}
% \II(t,z) &= \frac{1}{2} \dual{Az}{z}_{\ZZ^\ast,\ZZ} + \FF(z) - \dual{\ell(t)}{z}_{\VV^\ast,\VV} \\
% &\geq \frac{\alpha}{2} \norm{z}_\ZZ^2 - \dual{\ell(t)}{z}_{\VV^\ast,\VV}
% \geq  \frac{\alpha}{4} \norm{z}_\ZZ^2 -c_\alpha \norm{\ell}_{C^0([0,T];\VV^\ast)}^2
%\end{align*}
%with a constant $c_{\alpha}$, depending only on $\alpha$ and the embedding constant of $\ZZ\embeds \VV$.
%Combining this with \eqref{eq:basic.02} and using $\RR \geq 0$, we arrive at
%\begin{equation*}
%\begin{aligned}
% \frac{\alpha}{4} \norm{\zth{k}}_\ZZ^2 &\leq c_\alpha \norm{\ell}_{C^0([0,T];\VV^\ast)}^2 + \Ih{k}{k} \\
% &\leq c_{\alpha} \norm{\ell}_{C^0([0,T];\VV^\ast)}^2 + (\II(0,z_0)+\beta) \exp(\mu T).
%\end{aligned}
%\end{equation*}
%\end{proof}
Thus, we have that $z_k \in B_{\ZZ}(0,r_0)$ for all $k \in \N$ for some $r_0$ independent of $\tau$. The next result is essential in the context of parameterized solutions, since it implies that the artificial time is bounded and that the final time $T$ is reached within a finite number of iteration.
%%%%%%%% bound artificial time
\begin{proposition}[Bound on artificial time] \label{prop.finiteNumberSteps}
For every $\tau>0$, there exists an index $N(\tau) \in \N$ such that $t_{N(\tau)} = T$. Moreover, it holds
$\sum_{k=1}^{N(\tau)} \norm{z_k-z_{k-1}}_\ZZ \leq C_{\Sigma}$
for some $C_{\Sigma}=C_{\Sigma}(\alpha,\FF,|\ell|_{Lip},z_0,T)>0$ independent of $\tau$.
\end{proposition}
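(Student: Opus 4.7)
The plan hinges on the variational energy descent coupled with the quadratic structure of $\II$. Writing the minimization~\eqref{eq:locmin} with the admissible test $z=z_{k-1}$ yields the per-step energy inequality
\[
\II(t_{k-1},z_k)+\RR(z_k-z_{k-1})\le\II(t_{k-1},z_{k-1}).
\]
Substituting $\II(t_{k-1},z_k)=\II(t_k,z_k)+\dual{\ell(t_k)-\ell(t_{k-1})}{z_k}_{\VV^*,\VV}$ and telescoping for $k=1,\dots,N$, the Lipschitz continuity of $\ell$ together with the a-priori bound $\|z_k\|_\ZZ\le r_0$ from Lemma~\ref{lem.basicest3}, and the lower boundedness of $\II$ on $B_\ZZ(0,r_0)$ (a direct consequence of the coercivity of $A$, $\FF\ge 0$, and $\ell\in C([0,T];\VV^*)$), give the $\tau$-independent bound $\sum_{k=1}^N\RR(z_k-z_{k-1})\le C$.

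Next I would upgrade this to a $\ZZ$-norm bound by exploiting the semi-coercivity of $\II$. A second-order Taylor expansion of $\II(t_{k-1},\cdot)$ around $z_{k-1}$ together with $\dual{Av}{v}\ge\alpha\|v\|_\ZZ^2$ and the lower-order bound~\eqref{eq:assuF} produces the per-step estimate
\[
\tfrac{\alpha}{2}\|z_k-z_{k-1}\|_\ZZ^2\le -\RR(z_k-z_{k-1})-\dual{D_z\II(t_{k-1},z_{k-1})}{z_k-z_{k-1}}+\tfrac{C(r_0)}{2}\|z_k-z_{k-1}\|_\ZZ\|z_k-z_{k-1}\|_\VV.
\]
The cross term $\dual{D_z\II(t_{k-1},z_{k-1})}{\cdot}$ is handled via the optimality at step $k-1$, which expresses $-D_z\II(t_{k-2},z_{k-1})$ as an element of $\partial\RR(0)+\lambda_{k-1}J_\VV(z_{k-1}-z_{k-2})$; the difference $D_z\II(t_{k-1},z_{k-1})-D_z\II(t_{k-2},z_{k-1})=-(\ell(t_{k-1})-\ell(t_{k-2}))$ is $\OO(\tau)$ in $\VV^*$. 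Combining this with the 1-homogeneity of $\RR$, the constraint $\|z_k-z_{k-1}\|_\VV\le\tau$, and two applications of Young's inequality, one arrives after summation and a bootstrap using the time-update identity $k\tau=t_k+\sum_{j\le k}\|z_j-z_{j-1}\|_\VV\le T+\sum_{j\le k}\|z_j-z_{j-1}\|_\ZZ$ at the desired estimate $\sum_{k=1}^{N(\tau)}\|z_k-z_{k-1}\|_\ZZ\le C_\Sigma$, with $C_\Sigma$ depending only on $\alpha,\FF,|\ell|_{\mathrm{Lip}},z_0,T$.

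The existence of a finite $N(\tau)$ with $t_{N(\tau)}=T$ is then an immediate corollary: by induction $k\tau=t_k+\sum_{j=1}^k\|z_j-z_{j-1}\|_\VV\le T+C_\Sigma$, so the scheme must reach $T$ in at most $N(\tau)\le(T+C_\Sigma)/\tau$ iterations.

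The main obstacle will be the last bootstrap: the $\XX$-coercivity of $\RR$ alone only yields boundedness of $\sum\|z_k-z_{k-1}\|_\XX$, and the $\ZZ$-coercivity of $A$ only produces a quadratic control $\sum\|z_k-z_{k-1}\|_\ZZ^2\le C$. Since the embedding chain $\ZZ\hookrightarrow\VV\hookrightarrow\XX$ goes the wrong way, converting the latter into the claimed linear, $\ell^1$-type total variation bound requires very carefully playing the box constraint $\|z_k-z_{k-1}\|_\VV\le\tau$ off against the a-priori bound on $N(\tau)\tau$, so that the Cauchy--Schwarz factor $\sqrt{N(\tau)}\sim\tau^{-1/2}$ is cancelled exactly; properly controlling the stability-defect $\dual{D_z\II(t_{k-1},z_{k-1})}{z_k-z_{k-1}}$ across active steps (where $\lambda_{k-1}>0$ and $z_{k-1}$ is not locally stable) is the main technical point.
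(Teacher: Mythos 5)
The paper itself gives no proof of this proposition but defers to \cite{knees17,fem_paramsol}; measured against the argument there, your first two building blocks are the right ones: the energy--dissipation telescoping gives $\sum_k\RR(z_k-z_{k-1})\le C$ and hence $\Sigma_\XX:=\sum_k\|z_k-z_{k-1}\|_\XX\le C_0$, and the Taylor/consecutive-optimality estimate (with the multiplier terms telescoping via \eqref{eq:opt.prop03} and $\lambda_1=0$ from Remark~\ref{rem:z1=z0}, rather than leaving an uncontrolled $\sum_k\lambda_{k-1}\tau^2$ as in your version) gives $\sum_k\|z_k-z_{k-1}\|_\ZZ^2\le C\,\tau\,N(\tau)\tau$. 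The gap sits exactly where you suspect it, and it is not closable by the means you propose. Cauchy--Schwarz together with the identity $N(\tau)\tau=T+\Sigma_\VV$ (with $\Sigma_\VV:=\sum_k\|z_k-z_{k-1}\|_\VV$) does cancel the factor $\sqrt{N(\tau)}$ exactly, but what comes out is
\[
\Sigma_\ZZ:=\sum_k\|z_k-z_{k-1}\|_\ZZ\;\le\;\sqrt{N(\tau)}\Bigl(\textstyle\sum_k\|z_k-z_{k-1}\|_\ZZ^2\Bigr)^{1/2}\;\le\;\sqrt{C}\,\bigl(T+\Sigma_\VV\bigr)\;\le\;\sqrt{C}\,\bigl(T+\Sigma_\ZZ\bigr),
\]
a self-referential inequality that only closes if $\sqrt{C}<1$; since $C$ involves $1/\alpha$, $C_\FF$ and $|\ell|_{Lip}$, it is not small in general. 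Your ``bootstrap using the time-update identity'' is circular: $N(\tau)\tau$ is precisely the quantity to be bounded, so it cannot serve as an a-priori input.

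The missing ingredient is the \emph{compactness} of the embedding $\ZZ\embeds\VV$, a standing assumption of the paper that is used in \cite{knees17} at exactly this point. Ehrling's lemma yields, for every $\varepsilon>0$, a constant $C_\varepsilon$ with $\|v\|_\VV\le\varepsilon\|v\|_\ZZ+C_\varepsilon\|v\|_\XX$, hence $\Sigma_\VV\le\varepsilon\,\Sigma_\ZZ+C_\varepsilon\,\Sigma_\XX$. Anchoring the bootstrap on the dissipation bound $\Sigma_\XX\le C_0$ from your Step 1 --- which you derive but then dismiss as insufficient, whereas it is in fact the indispensable anchor --- turns the displayed inequality into $\Sigma_\ZZ\le\sqrt{C}\,(T+C_\varepsilon C_0)+\sqrt{C}\,\varepsilon\,\Sigma_\ZZ$, which closes once $\varepsilon$ is fixed with $\sqrt{C}\,\varepsilon\le 1/2$. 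With $\Sigma_\VV$ (hence $N(\tau)\tau$) bounded, finiteness of $N(\tau)$ and the $\ZZ$-total-variation bound follow as you indicate, provided all estimates are first run for arbitrary partial sums $k\le K$ before concluding termination.
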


%% Proof
\begin{proof}
see \citep{knees17} or \cite{fem_paramsol}.
\end{proof}

In what follows, we denote by $N(\tau)$ the number of necessary iterates to reach the final time at fineness $\tau$.
Finally, we state the following three auxiliary results, which will be used several times throughout this paper.
%%%%% ehrling type
\begin{lemma}\label{lem:ehrling}
There exists $C_{\FF,r_0}>0$, such that for all $z_1,z_2 \in B_\ZZ(0,r_0)$:
\begin{equation*}
\abs{\dual{D_z\FF(z_1)-D_z\FF(z_2)}{v-w}_{\VV^*, \VV}} 
\leq C_{\FF,r_0} \, \norm{z_1-z_2}_\ZZ \, \norm{v-w}_\VV
\end{equation*}
for all $v,w \in \ZZ$.
% \[ \abs{\dual{D_z\FF(v)-D_z\FF(w)}{z_1-z_2}_{\VV^*, \VV}} 
% \leq C \, \norm{v-w}_\ZZ \, \norm{z_1-z_2}_\VV \]
% for all $v,w \inB_\ZZ(0,r)$
\end{lemma}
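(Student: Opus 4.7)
The plan is a direct application of the fundamental theorem of calculus for the map $D_z\FF \colon \ZZ \to \VV^*$, which is $C^1$ by assumption \eqref{eq:assuF}. First, I would write, as a Bochner integral in $\VV^*$,
\begin{equation*}
D_z\FF(z_1) - D_z\FF(z_2) = \int_0^1 D_z^2\FF\bigl(z_2 + s(z_1-z_2)\bigr)(z_1-z_2)\, ds,
\end{equation*}
pair the identity with $v-w \in \VV$, and pull the duality bracket under the integral sign.

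Second, I would estimate the integrand pointwise. Setting $z_s := z_2 + s(z_1-z_2)$, the growth bound in \eqref{eq:assuF} gives
\begin{equation*}
\bigl\|D_z^2\FF(z_s)(z_1-z_2)\bigr\|_{\VV^*} \leq C_\FF\bigl(1 + \|z_s\|_\ZZ^q\bigr)\|z_1-z_2\|_\ZZ.
\end{equation*}
Since $B_\ZZ(0,r_0)$ is convex and both endpoints lie inside, $z_s \in B_\ZZ(0,r_0)$ for all $s \in [0,1]$, hence $\|z_s\|_\ZZ^q \leq r_0^q$ uniformly.

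Third, combining the duality estimate $|\dual{\xi}{v-w}_{\VV^*,\VV}| \leq \|\xi\|_{\VV^*}\|v-w\|_\VV$ with the previous bound and integrating over $s \in [0,1]$ yields the claim with explicit constant $C_{\FF,r_0} := C_\FF(1 + r_0^q)$. No genuine obstacle is expected; the only mild point to be careful about is the consistent interpretation of $D_z\FF(z)$ as an element of $\VV^*$ (rather than merely $\ZZ^*$), which is precisely what \eqref{eq:assuF} ensures and which justifies the $\VV^*$-valued Bochner integral above.
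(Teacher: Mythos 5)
Your proposal is correct and is essentially the paper's own argument spelled out in full: the paper also derives the estimate as a "direct consequence of the growth condition" on $D_z^2\FF$ in \eqref{eq:assuF}, arriving at the same constant $C_\FF(1+r_0^q)$. The mean-value/Bochner-integral representation and the convexity of the ball are exactly the (implicit) steps behind the paper's one-line proof.
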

\begin{proof}
The proof is a direct consequence of the growth-condition on $D^2_z\FF$. Let $v,w \in \ZZ$ be arbitrary. Using the aforementioned growth condition in \eqref{eq:assuF} together with the embedding $\ZZ \embeds \VV$ yields
\begin{align*}
\abs{\dual{D_z\FF(z_1)-D_z\FF(z_2)}{v-w}_{\VV^*, \VV}} 
%\leq \norm{D_z\FF(z_1)-D_z\FF(z_2)}_{\VV^\ast} \norm{v-w}_\VV 
 \leq C(1+r_0^q)\, \norm{z_1-z_2}_\ZZ \, \norm{v-w}_\VV .
\end{align*}
%Exploiting Lemma~\ref{lem.basicest3} we have $z_k,z_j \in B(0,r_0)$ for some $r_0 >0$ independent of $\tau$, which gives the assertion with a constant $C_\FF = C (1+r_0^q)$. 
\end{proof}
\noindent
\begin{remark}\label{rem:ehrling2}
Thanks to Lemma~\ref{lem.basicest3} and \ref{lem:ehrling}, there is a constant $C_\FF>0$ such that, for all iterates $z_k,z_j \in \ZZ$ it holds
\begin{align*}
\abs{\dual{D_z\FF(z_1)-D_z\FF(z_2)}{v-w}_{\VV^*, \VV}} 
%\leq \norm{D_z\FF(z_1)-D_z\FF(z_2)}_{\VV^\ast} \norm{v-w}_\VV 
 \leq C(1+r_0^q)\, \norm{z_1-z_2}_\ZZ \, \norm{v-w}_\ZZ .
\end{align*}
\end{remark}
%In parts of this paper we will use the following, slighlty weaker estimate
%\begin{align*}
%\forall z_1,z_2 \in &B_\ZZ(0,r_0), \, \, v,w \in \ZZ : \\
%&\quad \abs{\dual{D_z\FF(z_1)-D_z\FF(z_2)}{v-w}_{\ZZ^*, \ZZ}} 
%\leq C_{\FF,r_0} \, \norm{z_1-z_2}_\ZZ \, \norm{v-w}_\ZZ 
%\end{align*}
%which is a direct consequence of the embedding $\ZZ \embeds \VV$. Since the corresponding estimate becomes clear through the denoted norms, we simply reference both inequalities by Lemma~\ref{lem:ehrling}.

%%%%% general estimate
\begin{lemma}\label{lem:general_estimate}
Under the \cref{ass:globconv} we have for all iterates $k \in \N, \, k \leq N(\tau)$:
\begin{align}
0 &\geq \kappa \norm{z_{k+1}-z_{k}}_\ZZ^2 - |\ell|_{Lip} (t_{k}-t_{k-1}) \norm{z_{k+1}-z_{k}}_\VV + (\lambda_{k+1} - \lambda_k) \tau^2 . \label{eq:eq.aux002}
\end{align}
\end{lemma}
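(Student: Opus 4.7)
The plan is to subtract the discrete differential inclusion \eqref{eq:discrOptCon_DiffInc} at consecutive steps $k$ and $k+1$ and test with $z_{k+1}-z_k$. Writing the optimality systems, we get $\xi_k\in\partial\RR(z_k-z_{k-1})$ and $\xi_{k+1}\in\partial\RR(z_{k+1}-z_k)$ such that
\begin{align*}
0 &= \xi_k + \lambda_k J_\VV(z_k-z_{k-1}) + D_z\II(t_{k-1},z_k),\\
0 &= \xi_{k+1} + \lambda_{k+1} J_\VV(z_{k+1}-z_k) + D_z\II(t_k,z_{k+1}).
\end{align*}
Subtracting and pairing with $z_{k+1}-z_k$ gives an identity consisting of a dissipation contribution, a multiplier contribution, and a force-difference contribution, and I estimate each separately.

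For the dissipation part I use the $1$-homogeneity of $\RR$: by Euler $\dual{\xi_{k+1}}{z_{k+1}-z_k}=\RR(z_{k+1}-z_k)$, while $\xi_k\in\partial\RR(z_k-z_{k-1})\subset\partial\RR(0)$ yields $\dual{\xi_k}{z_{k+1}-z_k}\le\RR(z_{k+1}-z_k)$, so $\dual{\xi_{k+1}-\xi_k}{z_{k+1}-z_k}\ge 0$. For the force difference I split
\[ D_z\II(t_k,z_{k+1})-D_z\II(t_{k-1},z_k) = \bigl[D_z\II(t_k,z_{k+1})-D_z\II(t_k,z_k)\bigr] + \bigl[\ell(t_{k-1})-\ell(t_k)\bigr]. \]
The first bracket paired with $z_{k+1}-z_k$ is at least $\kappa\|z_{k+1}-z_k\|_\ZZ^2$ by \cref{ass:globconv}, and the second is bounded below by $-|\ell|_{Lip}(t_k-t_{k-1})\|z_{k+1}-z_k\|_\VV$ via the Lipschitz property of $\ell$.

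The step that needs the most care is handling the Lagrange multiplier terms, namely
\[ \lambda_{k+1}\|z_{k+1}-z_k\|_\VV^2 - \lambda_k\dual{J_\VV(z_k-z_{k-1})}{z_{k+1}-z_k}_{\VV^*,\VV}. \]
Here I invoke the complementarity \eqref{eq:opt.prop01} twice: either the multiplier vanishes or the norm equals $\tau$, so $\lambda_{k+1}\|z_{k+1}-z_k\|_\VV^2=\lambda_{k+1}\tau^2$, and $\lambda_k\|z_k-z_{k-1}\|_\VV=\lambda_k\tau$. Combining the latter with Cauchy--Schwarz and the feasibility bound $\|z_{k+1}-z_k\|_\VV\le\tau$ yields
\[ \lambda_k\dual{J_\VV(z_k-z_{k-1})}{z_{k+1}-z_k}_{\VV^*,\VV} \le \lambda_k\tau\|z_{k+1}-z_k\|_\VV \le \lambda_k\tau^2. \]
Plugging these bounds into the tested identity gives \eqref{eq:eq.aux002}. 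The only slightly delicate point is the asymmetric treatment of the two multiplier terms (equality for the $k+1$ term via complementarity, inequality for the $k$ term via complementarity plus the constraint $\|z_{k+1}-z_k\|_\VV\le\tau$); everything else is a direct consequence of the assumptions and the $1$-homogeneity of $\RR$.
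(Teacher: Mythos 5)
Your proposal is correct and follows essentially the same route as the paper: the paper combines \eqref{eq:opt.prop02}--\eqref{eq:opt.prop03} at step $k+1$ (your Euler identity for the selected subgradient $\xi_{k+1}$) with \eqref{eq:opt.prop04} at step $k$ tested by $z_{k+1}-z_k$ (your bound $\dual{\xi_k}{z_{k+1}-z_k}\le\RR(z_{k+1}-z_k)$), subtracts, and handles the multiplier terms exactly as you do — complementarity giving $\lambda_{k+1}\tau^2$ on the new step and complementarity plus Cauchy--Schwarz plus the constraint giving the bound $\lambda_k\tau^2$ on the old one. The splitting of the force difference into the convexity part and the load part, and the final estimates via Assumption~\ref{ass:globconv} and the Lipschitz continuity of $\ell$, coincide with the paper's.
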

 \begin{proof}
First of all, we observe that, due to the complementarity condition in \eqref{eq:opt.prop01}, it holds 
$\lambda_k \norm{z_k-z_{k-1}}_\VV^2 = \lambda_k \tau^2$. 
Now, by inserting \eqref{eq:opt.prop02} in \eqref{eq:opt.prop03} and writing the resulting equation for the iteration $k+1$, we obtain
\begin{align}
\RR(z_{k+1}-z_{k}) = \dual{-D_z\II(t_{k},z_{k+1})}{z_{k+1}-z_{k}}_{\ZZ^\ast,\ZZ} - \lambda_{k+1} \tau^2 . \label{eq:eq.aux001}
\end{align}
Testing the inequality \eqref{eq:opt.prop04} with $v = z_{k+1}-z_k$ yields
\begin{align*}
\RR(z_{k+1}-z_{k}) %&= \dual{-D_z\II(t_{k-1},z_{k})}{z_{k+1}-z_{k}}_{\ZZ^\ast,\ZZ} - \lambda_{k} \dual{z_k-z_{k-1}}{z_{k+1}-z_k}_\VV \\
&\geq \dual{-D_z\II(t_{k-1},z_{k})}{z_{k+1}-z_{k}}_{\ZZ^\ast,\ZZ} - \lambda_{k} \norm{z_k-z_{k-1}}_\VV \norm{z_{k+1}-z_k}_\VV \\
&\geq \dual{-D_z\II(t_{k-1},z_{k})}{z_{k+1}-z_{k}}_{\ZZ^\ast,\ZZ} - \lambda_{k}\tau^2
\end{align*}
Subtracting hereof the terms in \eqref{eq:eq.aux001}, exploiting the $\kappa$-uniform convexity of $\II(t,\cdot)$ and the Lipschitz-continuity of $\ell$, we obtain
\begin{align}
0
%& \geq \dual{D_z\II(t_{k},z_{k+1})-D_z\II(t_{k-1},z_{k})}{z_{k+1}-z_{k}}_{\ZZ^\ast,\ZZ} + (\lambda_{k+1}-\lambda_k) \tau^2 \notag \\
& \geq \dual{D_z\II(t_{k},z_{k+1})-D_z\II(t_{k},z_{k})}{z_{k+1}-z_{k}}_{\ZZ^\ast,\ZZ} \notag \\
&\quad + \dual{D_z\II(t_{k},z_{k})-D_z\II(t_{k-1},z_{k})}{z_{k+1}-z_{k}}_{\ZZ^\ast,\ZZ}  + (\lambda_{k+1}-\lambda_k) \tau^2 \notag \\
&\geq \kappa \norm{z_{k+1}-z_{k}}_\ZZ^2 - |\ell|_{Lip} (t_{k}-t_{k-1}) \norm{z_{k+1}-z_{k}}_\VV + (\lambda_{k+1} - \lambda_k) \tau^2 \label{eq:eq.aux005}
\end{align}
which was claimed.
\end{proof}
\begin{remark}
Revisiting the proof of \cref{lem:general_estimate}, we only needed the $\kappa$-uniform convexity in the last estimate. Since this will become important in the local uniform convex case, we state this estimate explicitly here: 
For all $k \in \N,$ $k \leq N(\tau)$, it holds (without assuming $\II$ to be uniformly convex):
\begin{align}
0 &\geq \dual{D_z\II(t_{k},z_{k+1})-D_z\II(t_{k},z_{k})}{z_{k+1}-z_{k}}_{\ZZ^\ast,\ZZ} \notag \\
&\quad + \dual{D_z\II(t_{k},z_{k})-D_z\II(t_{k-1},z_{k})}{z_{k+1}-z_{k}}_{\ZZ^\ast,\ZZ}  + (\lambda_{k+1}-\lambda_k) \tau^2 . \label{eq:eq.general_estimate_2} 
\end{align}
\end{remark}

\begin{lemma}\label{lem:estimate_consec_step}
Under \cref{ass:globconv} it holds for any $k \in \N$ with $k \leq N(\tau)$:
\begin{equation*} 
0 \in \partial\RR(0) + D_z\II(t_{k-1},z_k)
\quad \Longrightarrow \quad  \norm{z_{k+1}-z_k}_\ZZ \leq \frac{|\ell|_{Lip}}{\kappa} \, (t_k-t_{k-1}) 
\end{equation*}
\end{lemma}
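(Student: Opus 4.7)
The plan is to proceed in close analogy to the proof of Lemma~\ref{lem:general_estimate}, replacing the optimality inequality \eqref{eq:opt.prop04} at step $k$ by the local-stability hypothesis, which is strictly stronger in the sense that it comes without any multiplier term. Concretely, from $0 \in \partial\RR(0) + D_z\II(t_{k-1},z_k)$ I would read off the subgradient inequality
\[
\RR(v) \geq \dual{-D_z\II(t_{k-1},z_k)}{v}_{\VV^*,\VV} \qquad \forall\, v \in \VV,
\]
and then specialize to $v = z_{k+1}-z_k$.

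Next I would invoke identity \eqref{eq:eq.aux001} written at index $k+1$, namely
\[
\RR(z_{k+1}-z_k) = \dual{-D_z\II(t_k,z_{k+1})}{z_{k+1}-z_k}_{\ZZ^*,\ZZ} - \lambda_{k+1}\tau^2,
\]
and subtract it from the tested subgradient inequality above. Splitting the resulting difference $D_z\II(t_k,z_{k+1}) - D_z\II(t_{k-1},z_k)$ into a spatial part $D_z\II(t_k,z_{k+1})-D_z\II(t_k,z_k)$ and a temporal part $D_z\II(t_k,z_k)-D_z\II(t_{k-1},z_k)$ gives
\[
0 \geq \kappa\,\norm{z_{k+1}-z_k}_\ZZ^2 - |\ell|_{Lip}(t_k-t_{k-1})\,\norm{z_{k+1}-z_k}_\VV + \lambda_{k+1}\tau^2,
\]
exactly as in \eqref{eq:eq.aux005} but with the crucial difference that no $-\lambda_k\tau^2$ term appears, since the local-stability hypothesis yields no multiplier.

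Since $\lambda_{k+1}\tau^2\geq 0$ may simply be dropped, and since $\norm{\cdot}_\VV \leq \norm{\cdot}_\ZZ$ by the embedding normalization $c_\ZZ = 1$, I obtain
\[
\kappa\,\norm{z_{k+1}-z_k}_\ZZ^2 \leq |\ell|_{Lip}(t_k-t_{k-1})\,\norm{z_{k+1}-z_k}_\ZZ.
\]
Dividing by $\norm{z_{k+1}-z_k}_\ZZ$ (the inequality is trivial otherwise) delivers the asserted bound. There is no genuine obstacle: the only subtle point is to realize that the local-stability hypothesis at $(t_{k-1},z_k)$ functions as a ``clean'' replacement of the first-order condition on the previous step, so the $\lambda_k\tau^2$ defect present in \eqref{eq:eq.aux005} simply disappears, which is exactly what yields a sharp one-step bound driven purely by the temporal Lipschitz increment $|\ell|_{Lip}(t_k - t_{k-1})$.
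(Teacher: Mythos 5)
Your proof is correct and follows essentially the same route as the paper: both reduce to the inequality $0 \geq \kappa\norm{z_{k+1}-z_k}_\ZZ^2 - |\ell|_{Lip}(t_k-t_{k-1})\norm{z_{k+1}-z_k}_\VV + \lambda_{k+1}\tau^2$ and conclude via $\lambda_{k+1}\geq 0$ and the embedding. The only cosmetic difference is that the paper observes $\lambda_k=0$ from \eqref{eq:lambda_dist_expression} and cites Lemma~\ref{lem:general_estimate} as a black box, whereas you re-run that lemma's derivation with the local-stability inequality replacing \eqref{eq:opt.prop04}; the two computations are identical.
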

\begin{proof} Let $0 \in \partial \RR(0) + D_z\II(t_{k-1},z_k)$,
which directly implies that $\lambda_k = 0$, due to \eqref{eq:lambda_dist_expression}. Thanks to \cref{lem:general_estimate} 
and the non-negativity of $\lambda_{k+1}$, we thus arrive at
%\kappa \norm{z_{k+1}-z_{k}}_\ZZ^2 - |\ell|_{Lip} (t_{k}-t_{k-1}) \norm{z_{k+1}-z_{k}}_\VV + \lambda_{k+1} \tau^2 \\
$\kappa \norm{z_{k+1}-z_{k}}_\ZZ^2 - |\ell|_{Lip} (t_{k}-t_{k-1}) \norm{z_{k+1}-z_{k}}_\ZZ \leq 0$, 
where we used the embedding $\ZZ \embeds \VV$ with constant $ c_\ZZ = 1 $.
\end{proof}

%%%% general estimate 3
%Finally, the uniform convexity of the energy functional allows us to estimate the iterates in the stronger $\ZZ$-norm rather than the $\VV$-norm, which will become important to bound the derivative of the affine interpolants in $\ZZ$.
%\begin{lemma}\label{lem:general_estimate_3}
%For all iterates $k \in \N, \, k \leq N(\tau)$ we have
%\begin{align}
%0 &\geq \kappa \norm{z_{k+1}-z_{k}}_\ZZ^2 - |\ell|_{Lip} (t_{k}-t_{k-1}) \norm{z_{k+1}-z_{k}}_\VV + (\lambda_{k+1} - \lambda_k) \tau^2 \label{eq:eq.aux002}
%\end{align}
%\end{lemma}
%\begin{proof}
%\end{proof}

%%%%%%%%%%%%%%%%%%%%%%%%%%%%%%%%%%%%%%%%%
\subsection{Globally Uniformly Convex Energy (in case $|\ell|_{Lip}$ is small)}\label{sec:GlobConvLip}

We are now in the position to start our error analysis. We begin with the case of a uniformly convex energy, 
see Assumption~\ref{ass:globconv}. Beside this, we additionally assume

\begin{assumption}[Bound on the Lipschitz constant of the driving force]\hfill\label{ass:Lipschitz_small}\\
There exists $\delta \in (0,\kappa]$ so that $|\ell |_{Lip} \leq \kappa - \delta$.
\end{assumption}

We will drop this assumption in the next subsection for the price of losing the optimal rate of convergence, 
see Theorem~\ref{thm:reverse_approx_general} below.

In order to define a discrete solution, we first introduce suitable interpolants in the artificial time: \\
For $s \in [s_{k-1},s_k) \subset [0,S_\ind)$, the continuous and piecewise affine interpolants are defined through
\begin{equation}\label{eq:affine-interpolants}
\hat{z}_\ind(s) := \zth{k-1} + \frac{(s-s_{k-1})}{s_k-s_{k-1}} ( \zth{k} - \zth{k-1}), \quad
\hat{t}_\ind(s) := \tth{k-1} + \frac{(s-s_{k-1})}{s_k - s_{k-1}} ( \tth{k} - \tth{k-1}), 
\end{equation}
while the piecewise constant interpolants are given by
\begin{equation*}
 \ovbar{z}_\ind(s) := \zth{k}, \qquad \ovbar{t}_\ind(s) := \tth{k}, \qquad 
 \ubar{z}_\ind(s) := \zth{k-1}, \qquad \ubar{t}_\ind(s) := \tth{k-1}.
\end{equation*}
The basic idea of our convergence proof is to transform the affine-interpolant back into the physical time and then to compare it with the unique differential solution of the rate-independent system \eqref{eq:subDiffInc}, which exists due to \citep[Thm 7.4]{mielketheil}. In order to guarantee that the back-transformation exists and fulfills some upper bounds, we need the following Lemma:

\begin{lemma}\label{lem:timeprogress}
Let \cref{ass:Lipschitz_small} hold. Then it holds that
\begin{equation}
\norm{z_{k+1} - z_{k}}_\ZZ \leq \frac{\kappa-\delta}{\kappa}\, (t_{k}-t_{k-1}) \quad \forall \, 1 \leq k \leq N(\tau)
\label{eq:timeprogress}
\end{equation} 
and $ (1- \frac{\kappa-\delta}{\kappa})= \frac{\delta}{\kappa} \leq \hat{t}_\tau^\prime(s) \leq 1 $ for almost all $s \in [0,\hat{S}_\tau]$. Moreover it holds $\hat{N}(\tau) = N(\tau)+1$.
\end{lemma}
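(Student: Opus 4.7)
The proof proceeds by induction on $k$. The key observation is that \cref{ass:Lipschitz_small}, combined with \cref{lem:estimate_consec_step}, keeps the box constraint $\|z_k - z_{k-1}\|_\VV \le \tau$ strictly inactive throughout the iteration, so the Lagrange multipliers $\lambda_k$ all vanish.

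\textbf{Base case.} I would invoke \cref{rem:z1=z0} to identify $z_1 = z_0$ and $t_1 - t_0 = \tau$. Because the initial state is locally stable, $0 \in \partial\RR(0) + D_z\II(t_0, z_1)$, so \cref{lem:estimate_consec_step} together with \cref{ass:Lipschitz_small} immediately gives $\|z_2 - z_1\|_\ZZ \le (|\ell|_{Lip}/\kappa)(t_1-t_0) \le ((\kappa-\delta)/\kappa)(t_1-t_0)$. \textbf{Induction step.} Assuming the bound at index $k-1$, the embedding $\ZZ \embeds \VV$ with constant one together with $t_{k-1}-t_{k-2} \le \tau$ yields $\|z_k - z_{k-1}\|_\VV \le ((\kappa-\delta)/\kappa)\,\tau < \tau$. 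Complementarity \eqref{eq:opt.prop01} then forces $\lambda_k = 0$, and \eqref{eq:discrOptCon_DiffInc} combined with $\partial\RR(z_k - z_{k-1}) \subset \partial\RR(0)$ (which holds by $1$-homogeneity of $\RR$) delivers $0 \in \partial\RR(0) + D_z\II(t_{k-1}, z_k)$. A second application of \cref{lem:estimate_consec_step} and \cref{ass:Lipschitz_small} then closes the induction and proves \eqref{eq:timeprogress}.

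Given \eqref{eq:timeprogress}, the bounds on $\hat{t}_\tau'$ follow from $\hat{t}_\tau'(s) = (t_k - t_{k-1})/(s_k - s_{k-1})$ and the time update \eqref{eq:tupdate}: with the natural arc-length choice $s_k - s_{k-1} = \tau$ one has $\hat{t}_\tau'(s) = 1 - \|z_k - z_{k-1}\|_\VV/\tau$, and the just-established estimate immediately yields $\delta/\kappa \le \hat{t}_\tau'(s) \le 1$. Finally, applying \eqref{eq:timeprogress} once more at $k = N(\tau)$ produces $\|z_{N(\tau)+1} - z_{N(\tau)}\|_\VV < \tau$, hence $\lambda_{N(\tau)+1} = 0$, and the same $1$-homogeneity argument shows that $z_{N(\tau)+1}$ is locally stable at $t_{N(\tau)} = T$, so $\hat{N}(\tau) = N(\tau)+1$. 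The main technical obstacle is to arrange the bookkeeping so that the implication chain ``bound at step $k-1$ $\Rightarrow$ $\lambda_k = 0$ $\Rightarrow$ $z_k$ locally stable at $t_{k-1}$ $\Rightarrow$ bound at step $k$'' closes cleanly, and in particular to handle the boundary step where \eqref{eq:tupdate} may cap $t_k$ at $T$; once this loop is in place, the algebra is routine.
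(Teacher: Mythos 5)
Your proof is correct and follows essentially the same route as the paper: induction via Remark~\ref{rem:z1=z0} for the base case, then the chain ``inactive constraint $\Rightarrow$ $\lambda_k=0$ $\Rightarrow$ local stability of $z_k$ via \eqref{eq:discrOptCon_DiffInc} and $\partial\RR(z_k-z_{k-1})\subset\partial\RR(0)$ $\Rightarrow$ Lemma~\ref{lem:estimate_consec_step}'', with the bounds on $\hat t_\tau'$ and the identification $\hat N(\tau)=N(\tau)+1$ read off from \eqref{eq:timeprogress} and the complementarity conditions exactly as in the paper.
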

\begin{proof} We argue by induction. 
Since $z_1 = z_0$ by Remark~\ref{rem:z1=z0}, 
we have $\partial \RR(z_1 - z_0) + D_z\II(t_0,z_1) = \partial \RR(0) + D_z\II(t_0,z_0) \ni 0$ so that \cref{lem:estimate_consec_step}
and Assumption~\ref{ass:Lipschitz_small} imply
\[  \norm{z_{2}-z_1}_\ZZ \leq \frac{|\ell|_{Lip}}{\kappa} \,  (t_1-t_0) \leq \frac{\kappa-\delta}{\kappa}\, (t_1-t_0), \]
which is \eqref{eq:timeprogress} for $k=1$.
Now, let $k\geq2$ be arbitrary and assume that \eqref{eq:timeprogress} holds for $k-1$, 
i.e., $\norm{z_{k} - z_{k-1}}_\ZZ \leq \frac{\kappa-\delta}{\kappa}\, (t_{k-1}-t_{k-2})<\tau$. 
Consequently, the complementarity conditions in \eqref{eq:opt.prop01} and \eqref{eq:discrOptCon_DiffInc} imply
\begin{equation*}
0 \in \partial \RR(z_k - z_{k-1}) + D_z\II(t_{k-1},z_k) \subset \partial \RR(0) + D_z\II(t_{k-1},z_k). 
\end{equation*}
Thus, by applying again \cref{lem:estimate_consec_step} and Assumption~\ref{ass:Lipschitz_small}, we obtain \eqref{eq:timeprogress} for the next iteration. 

For $s \in (0,\tau)$, the lower bound on $\hat{t}^\prime(s)$ follows immediately from $t_1 - t_0 = \tau$, see Remark~\ref{rem:z1=z0}.
For $s > \tau$, it is a direct consequence of \eqref{eq:timeprogress}, the embedding $\ZZ \embeds \VV$, and the time-update formula \eqref{eq:tupdate}. Finally, by \eqref{eq:timeprogress} and the complementarity condition \eqref{eq:opt.prop01}, we have $\lambda_{N(\tau)+1} = 0$, so that indeed $\hat{N}(\tau) = N(\tau)+1$ thanks to \eqref{eq:opt.prop02}.  
\end{proof}

We are now in the position to proof our main result on the convergence rate for parameterized solutions. By the Lemma above, there exists an unique inverse function $\hat{s}_\tau(t) : [0,T] \mapsto [0,\hat{S}_\tau]$ of $\hat{t}_\tau$. 
We will then denote by $z_\tau(t) := \hat{z}_\tau(s_\tau(t)) $ the retransformed discrete parameterized solution (see also end of the proof of \cref{thm:reverse_approx_small}).

% Since the time parameterization is nonunique, we retransform the discrete parameterized solution into the physical time. To do so we need the previous Lemma in order to provide an inverse function from the artificial into the physical time. We will thus denote by $\tilde{z}_\tau(t)$ the retransformed discrete parameterized solution.

%%%%%%%%%%%%% main theorem - convergence rate \tau
\begin{theorem}\label{thm:reverse_approx_small}
Let $\II(t,\cdot) \in C^{2,1}_{loc}(\ZZ;\R)$ (see \eqref{ass:regularity_I}) as well as \cref{ass:globconv} and \cref{ass:Lipschitz_small} hold. Moreover let $\ell \in W^{1,\infty}([0,T];\VV^\ast)$ with $\ell^\prime \in BV([0,T];\VV)$. Then, the sequence $\{z_\tau\}_{\tau>0}$ of retransformed discrete parameterized solutions converges to the unique (differential) solution $z$ and satisfies the a-priori error estimate
\begin{equation}\label{eq:reverse_approx_small}
\norm{z_\tau(t) - z(t)}_\ZZ \leq K \, \tau \qquad \forall t \in [0,T],
\end{equation}
where $K=K(\alpha,\kappa,\ell,z_0,T,\FF,\norm{A}_{\mathcal{L}(\ZZ,\ZZ^\ast)})>0$ is independent of $\tau$.
\end{theorem}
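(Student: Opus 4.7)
My plan is to compare the retransformed affine interpolant $z_\tau(t)=\hat{z}_\tau(\hat{s}_\tau(t))$ directly with the unique differential solution $z$ in physical time. The first key observation is that Lemma~\ref{lem:timeprogress} forces the ball constraint in \eqref{eq:locmin} to be \emph{strictly} inactive at every step: indeed $\|z_k-z_{k-1}\|_\VV\le\|z_k-z_{k-1}\|_\ZZ\le\tfrac{\kappa-\delta}{\kappa}(t_{k-1}-t_{k-2})<\tau$, so by the complementarity condition \eqref{eq:opt.prop01} we get $\lambda_k=0$ for every $k$. The discrete optimality system therefore collapses to the clean inclusion $0\in\partial\RR(z_k-z_{k-1})+D_z\II(t_{k-1},z_k)$, which in terms of the piecewise-affine $z_\tau$ and the piecewise-constant interpolants $\ubar{t}_\tau,\ovbar{z}_\tau$ reads $0\in\partial\RR(z_\tau'(t))+D_z\II(\ubar{t}_\tau(t),\ovbar{z}_\tau(t))$ on each subinterval. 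A standard differentiation of the continuous inclusion, together with uniform convexity, shows in parallel that $z\in W^{1,\infty}(0,T;\ZZ)$ with Lipschitz constant controlled by $|\ell|_{Lip}/\kappa$.

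Pairing these two inclusions via the maximal monotonicity of $\partial\RR$ (applied after rescaling by $1$-homogeneity, so that $\partial\RR(z_k-z_{k-1})=\partial\RR(z_\tau'(t))$) yields the pointwise comparison
\begin{equation*}
\langle D_z\II(\ubar{t}_\tau(t),\ovbar{z}_\tau(t))-D_z\II(t,z(t)),\,z_\tau'(t)-z'(t)\rangle\le 0\quad\text{a.e. on }(0,T).
\end{equation*}
I would then decompose the driver via the telescoping chain $(\ubar{t}_\tau,\ovbar{z}_\tau)\to(t,\ovbar{z}_\tau)\to(t,z_\tau)\to(t,z)$, exploiting the semilinear structure $D_z\II(t,z)=Az+D_z\FF(z)-\ell(t)$. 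The ``good'' piece $\langle D_z\II(t,z_\tau)-D_z\II(t,z),z_\tau'-z'\rangle$ I would identify, up to lower-order $\FF$-corrections, with the time-derivative of the Bregman-type functional $\mathcal{B}(t):=\II(t,z_\tau(t))-\II(t,z(t))-\langle D_z\II(t,z(t)),z_\tau(t)-z(t)\rangle$, which by $\kappa$-uniform convexity satisfies $\mathcal{B}(t)\ge\tfrac{\kappa}{2}\|z_\tau(t)-z(t)\|_\ZZ^2$. The remaining pieces produce residuals of the form $\langle\ell(\ubar{t}_\tau)-\ell(t),z_\tau'-z'\rangle$ and $\langle A(\ovbar{z}_\tau-z_\tau)+(D_z\FF(\ovbar{z}_\tau)-D_z\FF(z_\tau)),z_\tau'-z'\rangle$, each individually of order $\tau$ pointwise.

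Integrating the assembled inequality over $[0,t]$ with $\mathcal{B}(0)=0$, I would handle the $\ell$-residual by integration by parts in time, so that the integrand featuring $z_\tau'-z'$ is traded for one featuring $z_\tau-z$ plus a boundary term of order $\tau\cdot\|z_\tau-z\|_\ZZ$ at $s=t$; the $A$- and $D_z\FF$-residuals are handled analogously on each subinterval, using that an antiderivative of $z_\tau(s)-\ovbar{z}_\tau(s)$ vanishes at the right endpoint $s=t_k$ and exploiting the lower-order range of $D_z\FF$ via Lemma~\ref{lem:ehrling}. Each manipulation yields a contribution bounded by $C\tau\sup_{s\le t}\|z_\tau(s)-z(s)\|_\ZZ+C\tau^2$. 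Absorbing the linear term into the coercive contribution $\tfrac{\kappa}{2}\|z_\tau(t)-z(t)\|_\ZZ^2$ via Young's inequality (applied after taking the supremum in $s$) then delivers the pointwise bound $\|z_\tau(t)-z(t)\|_\ZZ\le K\tau$.

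The hard part, in my view, is achieving the \emph{optimal} linear rate rather than the naive $\OO(\sqrt{\tau})$ that a direct use of the Lipschitz bound $\RR(v)+\RR(-v)\le 2\overline{\rho}\|v\|_\VV$ would yield. The square-root loss must be averted by never estimating $\RR$-differences brutally; instead one routes all dissipation information through the \emph{equations} $\xi_k=-D_z\II(t_{k-1},z_k)$ and $\eta(t)=-D_z\II(t,z(t))$ and plays them off against each other through the uniform convexity of $\II$. This in turn requires using in concert (i) the semilinear structure of $\II$ (so that $D_z\FF$-cross-terms act only into $\VV^*$ and are controlled by Lemma~\ref{lem:ehrling}), (ii) the $\ZZ$-Lipschitz regularity of both $z_\tau$ and $z$ coming from Lemma~\ref{lem:timeprogress} and its continuous analogue, and (iii) the stated $BV$-regularity of $\ell'$, which is what makes the integration-by-parts step on the $\ell$-residual produce an $\OO(\tau)$-bound rather than merely $\OO(\sqrt{\tau})$.
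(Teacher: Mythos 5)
Your high-level architecture matches the paper's: you correctly observe that under Assumption~\ref{ass:Lipschitz_small} the ball constraint is never active (so $\lambda_k=0$ and the discrete inclusion is clean), you pair the discrete and continuous inclusions through the monotonicity/characterization of $\partial\RR$, and you close with a uniform-convexity-based error functional and Gronwall. The gap lies exactly where you suspect it does, and your proposed fix does not close it. After your telescoping decomposition, the residuals you must integrate are $\langle \ell(\ubar{t}_\tau(s))-\ell(s),z_\tau'-z'\rangle$ and $\langle A(\ovbar{z}_\tau-z_\tau)+D_z\FF(\ovbar{z}_\tau)-D_z\FF(z_\tau),z_\tau'-z'\rangle$. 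Take the $A$-piece on $[t_{k-1},t_k)$: it equals $(1-\theta(s))\langle A(z_k-z_{k-1}),z_\tau'(s)-z'(s)\rangle$. The part paired with $z_\tau'$ has a sign and integrates to $\tfrac12\langle A(z_k-z_{k-1}),z_k-z_{k-1}\rangle$, which is of size $\tau^2$ \emph{per step} and hence $\OO(\tau)$ after summing over the $\OO(1/\tau)$ steps; the part paired with $z'$ is likewise $\OO(\tau^2)$ per step with no cancellation across steps (if $z$ and $z_\tau$ both move monotonically, all terms have one sign). Your integration by parts makes matters worse: differentiating $z_\tau-\ovbar{z}_\tau$ produces $z_\tau'=(z_k-z_{k-1})/\Delta t_k$ in the bulk, and the resulting sums are controlled only by $\sum_k\|z_k-z_{k-1}\|_\ZZ\le C_\Sigma=\OO(1)$, yielding terms of size $\OO(1)\cdot\sup_s\|z_\tau(s)-z(s)\|_\ZZ$ that cannot be absorbed. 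The same happens with the $\ell$-residual: $\ell(\ubar{t}_\tau(s))-\ell(s)$ is only first-order small, and after integration by parts the bulk term $\int\langle\ell',z_\tau-z\rangle$ is again $\OO(1)\cdot\sup\|z_\tau-z\|$. Either way your Gronwall inequality reads $\gamma(t)\le C\gamma(t)+C\tau$, which delivers only $\|z_\tau-z\|_\ZZ\le C\sqrt{\tau}$ -- precisely the suboptimal rate of Theorem~\ref{thm:reverse_approx_general}, not the claimed $\OO(\tau)$.

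The missing idea is the paper's replacement of the piecewise-constant driving force $\xi_{k-1}=D_z\II(t_{k-1},z_k)$ by the time-affine convex combination $(1-\theta(t))\xi_{k-1}+\theta(t)\xi_k$ with $\xi_k=D_z\II(t_k,z_{k+1})$, which is admissible because $-\xi_{k-1},-\xi_k\in\partial\RR(0)$ and $\partial\RR(0)$ is convex (this also explains the paper's use of the index-shifted interpolant $\tilde z_\tau$, only compared with $z_\tau$ at the very end at the cost of one extra $\tau$). With this choice the $A$-contribution to the residual $E(t)$ vanishes \emph{identically} by linearity, the $\FF$-contribution becomes a genuine second-order interpolation error, and the $\ell$-contribution becomes the interpolation error of $\ell$, which is where the $BV$-regularity of $\ell'$ actually enters and yields $\int_0^T E\le C\tau^2$. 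The price is the additional term $R(t)=2(1-\theta(t))\langle\xi_{k-1}-\xi_k,\tilde z_\tau'\rangle$, which is superficially $\OO(\tau)$ pointwise but whose integral is shown to be $\OO(\tau^2)$ by a discrete summation-by-parts argument exploiting the sign information from the discrete stability inequality, the symmetry and positive semidefiniteness of $D_z^2\EE$, the bound $\sum_k\|z_k-z_{k-1}\|_\ZZ\le C_\Sigma$, and again $\ell'\in BV$. This entire mechanism (Steps~3--5 of the paper's proof) has no counterpart in your sketch, and without it the linear rate is not attainable along your route.
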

\begin{proof}For convenience of the reader we split the rather lengthy proof into eight parts, which are as follows:
\begin{enumerate}
    \item[0.] First, we will see that, due to the uniform convexity of the energy, \eqref{eq:subDiffInc} even admits a unique differential solution and not only 
    a parameterized one.
    \item[1.] Based on Lemma~\ref{lem:timeprogress}, we can transform the piecewise affine interpolants introduced above to the physical time. 
    This allows to compare the discrete solution with the exact (differential) solution, which of course also ``lives'' in the physical time.
    The error analysis however uses a slightly different piecewise affine interpolant, denoted by $\tilde z_\tau$ providing a certain shift in the time steps.
    \item[2.] In analogy to \cite{mielketheil}, we introduce a quantity $\gamma(t)$, which dominates the pointwise error $\|\tilde z_\tau (t) - z(t)\|_\ZZ$.
    This error measure enables us to deal with uniformly convex energy functionals instead of just quadratic and coercive ones.
    \item[3.] The error measure is essentially estimated by two contributions, denoted by $E(t)$ and $R(t)$. Both contributions depend only 
    differences of $D_z\II$ evaluated at different time points and different discrete solutions.
    \item[4.~\&~5.] $E(t)$ and $R(t)$ are estimated by using the smoothness properties of $\FF$ and the load $\ell$. In addition, 
    the uniform convexity of $\II$ plays an essential role for the estimate of $R$. In this way, one obtains a estimate of $\Landau{\tau^2}$ for the 
    $L^1$-norms of $E$ and $R$.
    \item[6.] Together with Gronwall's lemma, this estimate yields a bound of $\Landau{\tau}$ for the error indicator $\gamma$ and thus also for the 
    error $\|\tilde z_\tau (t) - z(t)\|_\ZZ$. 
    \item[7.] Finally, we relate the $\|\tilde z_\tau (t) - z(t)\|_\ZZ$ with the auxiliary interpolant $\tilde z_\tau$ to the ``true error'' containing the 
    ``correct'' interpolant $z_\tau = \hat z_\tau \circ s_\tau$ as introduced above.
\end{enumerate}
\vspace*{1ex}

\noindent
\textsl{Step 0: Differential Solution}\\
First of all, due to \cref{thm:app.existence_and_uniqueness}, there exists a unique (differential) solution $z \in C^{0,1}(0,T;\ZZ)$ of the rate-independent system. In particular, it holds f.a.a.~$t\in [0,T]$ that $0 \in \partial\RR(z^\prime(t)) + D_z\II(t,z(t))$,
which can be reformulated as (see \eqref{eq:app.uniqueDiffSol_reformulations}):
\begin{subequations}
\begin{align}
\forall v \in \ZZ: \qquad \RR(v) &\geq \dual{-D_z\II(t,z(t))}{v}_{\ZZ^\ast,\ZZ} \quad  &\forall t \in [0,T] \label{eq:uniqueDiffSol_reformulation1} \\
\RR(z^\prime(t)) &= \dual{-D_z\II(t,z(t))}{z^\prime(t)}_{\ZZ^\ast,\ZZ} \quad &\faa t \in [0,T]. \label{eq:uniqueDiffSol_reformulation2}
\end{align}
\end{subequations}
Since $ z \in C^{0,1}(0,T;\ZZ)$, it additionally holds 
\begin{equation}\label{eq:bound_diff_sol_thm1}
\norm{z^\prime(t)}_\ZZ \leq C \quad \faa t \in [0,T].
\end{equation}

\noindent
\textsl{Step 1: Construction of interpolants in the physical time}\\
Given $t \in [t_{k-1}^\tau,t_k^\tau)$ with $k\leq N(\tau)$, we define the following affine interpolant
\begin{gather}\label{eq:affineInterpol_thm1}
\tilde{z}_\tau(t) = z_{k}^\tau + \frac{t-t_{k-1}^\tau}{t_k^\tau-t_{k-1}^\tau}(z_{k+1}^\tau - z_{k}^\tau).
\end{gather} 
Note that $[t_{k-1},t_k)$ is nonempty and that $\lambda_k = 0$ due to \cref{lem:timeprogress}. Thus, from the first order optimality condition for the local minimization problem, i.e. \eqref{eq:discrOptCon_DiffInc}, we know that $0 \in \partial\RR(\tilde{z}_\tau^\prime(t)) + D_z\II(t_{k},z_{k+1})$.
Analogous to \textsl{Step 0}, this can be reformulated as 
%\begin{align}
%\forall v \in \VV: \qquad \RR(v) &\geq \dual{-D_z\II(\underline{t}_\tau(t),\overline{z}_\tau(t))}{v} \quad  &\forall t \in [0,T]  \\
%\RR(\tilde{z}_\tau^\prime(t)) &= \dual{-D_z\II(\underline{t}_\tau(t),\overline{z}_\tau(t))}{\tilde{z}_\tau^\prime(t)} \quad &\faa t \in [0,T] \label{eq:discreteSol_reformulation2}
%\end{align}
\begin{subequations}
\begin{align}
\forall v \in \ZZ: \qquad \RR(v) &\geq \dual{-D_z\II(t_{k},z_{k+1})}{v}_{\ZZ^\ast,\ZZ} \quad  &\forall k \in \{0,\dots,N(\tau)\}  \label{eq:discreteSol_reformulation1}\\
\RR(\tilde{z}_\tau^\prime(t)) &= \dual{-D_z\II(t_k,z_{k+1})}{\tilde{z}_\tau^\prime(t)}_{\ZZ^\ast,\ZZ} \quad &\faa t \in [0,T] \label{eq:discreteSol_reformulation2}
\end{align}
\end{subequations}
Exploiting \cref{lem:timeprogress}, we additionally have
\begin{equation}\label{eq:discr_deriv_bound_thm1}
\norm{\tilde{z}_\tau^\prime(t)}_\ZZ \leq C \quad \faa t \in [0,T].
\end{equation}

\noindent
\textsl{Step 2: Introduction of an error measure}\\
We now basically follow the lines of \citep[Thm 7.4]{mielketheil}, but have to adapt the underlying analysis at some points. Therefore we present the arguments in detail. Let us define 
\begin{equation}\label{eq:defgamma}
 \gamma(t) := \dual{D_z\II(t,\tilde{z}_\tau(t)) - D_z\II(t,z(t))}{\tilde{z}_\tau(t)-z(t)}_{\ZZ^\ast,\ZZ} \,.
\end{equation}
Due to the $\kappa$-uniform convexity of $\II(t,\cdot)$, we have 
\begin{equation}\label{eq:gammaest}
    \gamma(t) \geq \kappa \norm{\tilde{z}_\tau(t)-z(t)}_\ZZ^2
\end{equation}
so that $\gamma$ measures the discretization error. In full analogy to \citep[Thm 7.4]{mielketheil}, we can estimate (see Appendix~\ref{sec:App.errorGamma})
\begin{equation}
\dot{\gamma}(t) \leq C \, \norm{\tilde{z}_\tau(t)-z(t)}_\ZZ^2 
+ 2 \dual{D_z\II(t,\tilde{z}_\tau(t)) - D_z\II(t,z(t))}{\tilde{z}^\prime_\tau(t)-z^\prime(t)}_{\ZZ^\ast,\ZZ} \, , \label{eq:ddt-gamma_thm1}
\end{equation}
for almost all $t \in [0,T]$. We split the second term into two parts, namely
\begin{align*}
e_1(t) &:= 2 \, \dual{D_z\II(t,z(t)) - D_z\II(t,\tilde{z}_\tau(t))}{z^\prime(t)}_{\ZZ^\ast,\ZZ} \\
\text{and} \quad e_2(t) &:= 2 \, \dual{D_z\II(t,\tilde{z}_\tau(t)) - D_z\II(t,z(t))}{\tilde{z}^\prime_\tau(t)}_{\ZZ^\ast,\ZZ} \, . 
\end{align*}
\textsl{Step 3: Estimates for the error $e_i$}\\
Let again $k \leq N(\tau)$ and $t \in [t_{k-1},t_k)$ be arbitrary. First observe that, due to the convexity of $\partial \RR(0)$, it holds for 
\[ \theta(t) = \frac{t-t_{k-1}}{t_k-t_{k-1}} \]
that $-(1-\theta(t)) \, \xi_{k-1} - \theta(t) \, \xi_k \in \partial \RR(0)$ with 
$\xi_{k-1} := D_z\II(t_{k-1},z_k)$ and $\xi_k := D_z\II(t_k,z_{k+1})$.
From the characterization of $\partial\RR(0)$, we infer 
$\RR(v) \geq - \dual{(1-\theta(t)) \, \xi_{k-1} + \theta(t) \, \xi_k}{v}_{\ZZ^\ast,\ZZ}$ for all $v \in \ZZ$.
Inserting herein $v = z^\prime(t)$ and substracting \eqref{eq:uniqueDiffSol_reformulation2}, we can estimate 
\begin{align}
e_1(t) &= 2 \, \dual{D_z\II(t,z(t)) - \left(1-\theta(t) \right)\xi_{k-1} - \theta(t) \, \xi_k}{z^\prime(t)}_{\ZZ^\ast,\ZZ} \notag \\
&\qquad + 2 \dual{\left(1-\theta(t) \right)\xi_{k-1} + \theta(t) \, \xi_k - D_z\II(t,\tilde{z}_\tau(t))}{z^\prime(t)}_{\ZZ^\ast,\ZZ} \notag \\
&\leq 2 \, \norm{\left(1-\theta(t) \right)\xi_{k-1} + \theta(t) \, \xi_k - D_z\II(t,\tilde{z}_\tau(t))}_{\ZZ^\ast} \norm{z^\prime(t)}_\ZZ \label{eq:e1_estimate_thm1}
\end{align}
for almost all $t \in [t_{k-1},t_k)$.

Next we turn to the term $e_2$. Similarly, we take $v = \tilde{z}_\tau^\prime(t)$ in \eqref{eq:uniqueDiffSol_reformulation1} and substract \eqref{eq:discreteSol_reformulation2} to obtain $0 \geq \dual{D_z\II(t_{k},z_{k+1}) - D_z\II(t,z(t))}{\tilde{z}_\tau^\prime(t)}_{\ZZ^\ast,\ZZ}$,
from which we deduce
\begin{equation}\label{eq:e2_estimate_thm1}
\begin{aligned}
e_2(t) 
%&= 2 \, \dual{D_z\II(t,\tilde{z}_\tau(t)) - D_z\II(t_{k},z_{k+1})}{\tilde{z}^\prime_\tau(t)}_{\ZZ^\ast,\ZZ} \\ 
%&\qquad + 2 \, \dual{D_z\II(t_{k},z_{k+1}) - D_z\II(t,z(t))}{\tilde{z}^\prime_\tau(t)}_{\ZZ^\ast,\ZZ} 
&\leq 2 \, \dual{D_z\II(t,\tilde{z}_\tau(t)) - D_z\II(t_{k},z_{k+1})}{\tilde{z}^\prime_\tau(t)}_{\ZZ^\ast,\ZZ}\\
&\leq 2 \dual{D_z\II(t,\tilde{z}_\tau(t)) - \left(1-\theta(t) \right)\xi_{k-1} - \theta(t) \, \xi_k}{\tilde{z}_\tau^\prime(t)}_{\ZZ^\ast,\ZZ} \\
&\qquad + 2 \dual{\left(1-\theta(t) \right)\xi_{k-1} + \theta(t) \, \xi_k - D_z\II(t_{k},z_{k+1})}{\tilde{z}_\tau^\prime(t)}_{\ZZ^\ast,\ZZ} \\
&\leq 2 \, \norm{\left(1-\theta(t) \right)\xi_{k-1} + \theta(t) \, \xi_k - D_z\II(t,\tilde{z}_\tau(t))}_{\ZZ^\ast} \norm{\tilde{z}_\tau^\prime(t)}_\ZZ \notag \\
&\qquad + 2 (1-\theta(t)) \dual{D_z\II(t_{k-1},z_k)-D_z\II(t_{k},z_{k+1})}{\tilde{z}_\tau^\prime(t)}_{\ZZ^\ast,\ZZ}.
\end{aligned}
\end{equation}
%Furthermore, in account of $-D_z\II(t_k,z_{k+1}) \in \partial\RR(0)$, we get
%\[ 0 \geq \dual{D_z\II(t_{k-1},z_k) - D_z\II(t_k,z_{k+1})}{\tilde{z}_\tau^\prime(t)} . \]
%Multiplying by $\theta(t) \geq 0$ from above this yields
%\begin{equation}
%0 \leq \theta(t) \dual{D_z\II(t_k,z_{k+1}) - D_z\II(t_{k-1},z_k)}{\tilde{z}_\tau^\prime(t)} = \theta(t) \dual{\xi_{k-1}-\xi_k}{\tilde{z}_\tau^\prime(t)} \label{eq:eq.aux007}
%\end{equation}
%Hence we have
%\begin{align*}
%e_2(t) &\leq 2 \, \dual{D_z\II(t,\tilde{z}_\tau(t)) - D_z\II(t_{k-1},z_k)}{\tilde{z}^\prime_\tau(t)} \\
%&\qquad + \theta(t) \dual{D_z\II(t_k,z_{k+1}) - D_z\II(t_{k-1},z_k)}{\tilde{z}_\tau^\prime(t)} \\
%&= 2 \, \dual{D_z\II(t,\tilde{z}_\tau(t)) - \left(1-\theta(t) \right)\xi_{k-1} + \theta(t) \, \xi_k}{\tilde{z}_\tau^\prime(t)} \\
%&\leq 2 \, \norm{\left(1-\theta(t) \right)\xi_{k-1} + \theta(t) \, \xi_k - D_z\II(t,\tilde{z}_\tau(t))}_{\ZZ^\ast} \norm{\tilde{z}_\tau^\prime(t)}_\ZZ
%\end{align*}
%for almost all $ t \in [t_{k-1},t_k)$. 
Next, let us define
\begin{align} 
E(t) &:= \norm{\left(1-\theta(t) \right)\xi_{k-1} + \theta(t) \, \xi_k - D_z\II(t,\tilde{z}_\tau(t))}_{\ZZ^\ast} \label{eq:E_thm1}\\
\text{and } \quad R(t) &:= 2 (1-\theta(t)) \dual{D_z\II(t_{k-1},z_k)-D_z\II(t_{k},z_{k+1})}{\tilde{z}^\prime_\tau(t)}_{\ZZ^\ast,\ZZ} . \label{eq:R_thm1}
\end{align}
Then we insert \eqref{eq:E_thm1} and \eqref{eq:R_thm1} in \eqref{eq:e1_estimate_thm1} and \eqref{eq:e2_estimate_thm1}. The resulting estimates for $e_1$ and $e_2$ are in turn inserted in \eqref{eq:ddt-gamma_thm1}, which, together with the boundedness of $\norm{z^\prime(t)}_\ZZ$ and $\norm{\tilde{z}_\tau^\prime(t)}_\ZZ$ by \eqref{eq:bound_diff_sol_thm1} and \eqref{eq:discr_deriv_bound_thm1}, yields 
\begin{equation}
\dot{\gamma}(t) \leq C ( \norm{\tilde{z}_\tau(t) - z(t)}_\ZZ^2 + E(t) + R(t) ) . \label{eq:gamma-E-R_thm1}
\end{equation}

\noindent
\textsl{Step 4: Estimate for $E(t)$}\\
The particular structure of $\II$ together with the linearity of $A$ and the definition of $\tilde{z}_\tau$ gives
\begin{align*}
E(t) &\leq %\norm{(1-\theta(t)) A z_{k-1} + \theta(t) A z_k - A\left( (1-\theta(t))z_{k-1} - \theta(t) z_k\right)}_{\ZZ^\ast} \\
\norm{(1-\theta(t)) D_z\FF(z_{k}) + \theta(t) D_z\FF(z_{k+1}) - D_z\FF\left( (1-\theta(t))z_{k} - \theta(t) z_{k+1}\right)}_{\ZZ^\ast} \\
&\qquad + \norm{(1-\theta(t)) \ell(t_{k-1}) + \theta(t) \ell(t_k) - \ell(t)}_{\ZZ^\ast} \\
&=: I_1(t) + I_2(t).
\end{align*}
Exploiting the regularity of $\FF$, we can estimate
\begin{align*}
I_1(t)  
%&= \Big{\|} \theta(t) \left( D_z\FF(z_{k+1}) - D_z\FF(z_{k}) \right) - \theta(t) \int_0^1 D_z^2\FF(z_k+s \theta(t)(z_{k+1}-z_k))[z_{k+1}-z_k] \d s \Big{\|}_{\ZZ^\ast}  \\ 
&\leq \theta(t) \norm{z_{k+1}-z_k}_\ZZ  \\
& \;\quad \int_0^1 \vnorm{ D_z^2\FF(z_k+s (z_{k+1}-z_k)) - D_z^2\FF(z_k+s \theta(t)(z_{k+1}-z_k))}_{\LL(\ZZ,\LL(\ZZ,\ZZ^\ast))} \d s \\
&\leq C \norm{z_{k+1}-z_k}_\ZZ^2 , 
\end{align*}
where we also used $ \theta(t) \in [0,1]$ and the boundedness of the iterates $z_{k}$ in $\ZZ$ independent of $\tau$ from \cref{lem.basicest3}. For $I_2$, we proceed similarly by exploiting the regularity of $\ell$: 
%\begin{align*}
%I_2(t) &\leq \norm{(1-\theta(t)) \ell(t_{k-1}) + \theta(t) \ell(t_k) - \ell(t_{k-1}) - (t-t_{k-1}) \ell^\prime(t_{k-1})}_{\VV} \\
%&\quad + \frac{(t-t_{k-1})^2}{2} \int_0^1 \norm{\ell^{\prime \prime}(t_{k-1} + s(t_k-t_{k-1}))}_\VV \d s \\
%&\leq \vnorm{(t-t_{k-1}) \left( \frac{\ell(t_k)-\ell(t_{k-1})}{t_k-t_{k-1}} - \ell^\prime(t_{k-1}) \right)}_{\VV} + \frac{\tau^2}{2} \int_0^T \norm{\ell^{\prime \prime}(s)}_\VV \, \d s \\
%&\leq \tau^2 \int_0^T \norm{\ell^{\prime \prime}(s)}_\VV \, \d s 
%\end{align*}
\begin{align*}
I_2(t) \leq 
\int_{t_{k-1}}^{t} \left\| \frac{\ell(t_k)-\ell(t_{k-1})}{t_k-t_{k-1}} - \ell^\prime(s) \right\|_{\VV} \d s 
%&\leq \int_{t_{k-1}}^t \frac{1}{t_k-t_{k-1}} \vnorm{\int_{t_{k-1}}^{t_k} \ell^\prime(r)-\ell^\prime(s) \d r}_\VV \d s 
%\leq \int_{t_{k-1}}^t \norm{\ell^{\prime}}_{BV([t_{k-1},t_k];\VV)} \d s 
\leq \tau \norm{\ell^{\prime}}_{BV([t_{k-1},t_k];\VV)} \, \d s . 
\end{align*}
Since $\norm{z_{k+1}-z_k}_\ZZ \leq C \tau$ by \cref{lem:timeprogress}, the above estimates for $I_1(t)$ and $I_2(t)$ imply 
for all $t \in [t_{k-1},t_k)$ that $E(t) \leq C \tau^2 + \tau \norm{\ell^{\prime}}_{BV([t_{k-1},t_k];\VV)}$. 
Now integrating $E$ yields
\begin{equation}
\int_0^T E(t) \d t \leq C \tau^2 + \tau^2 \norm{\ell^{\prime}}_{BV([0,T];\VV)} \leq C \tau^2 . \label{eq:E_estimate_thm1}
\end{equation} 

\noindent
\textsl{Step 5: Estimate for $R(t)$}\\
First, we abbreviate $\EE(z) := \dual{Az}{z} + \FF(z)$ so that $\II(t,z) = \EE(z) - \dual{\ell(t)}{z}$, as well as 
\begin{alignat*}{5}
\Delta t_{k} &:= t_k-t_{k-1}, &\quad \d_\tau \ell_k &:= \frac{\ell(t_k)-\ell(t_{k-1})}{\Delta t_k}, & \quad & k = 1, ..., N(\tau),\\
\d_\tau z_{k+1} &:= \frac{z_{k+1}-z_{k}}{\Delta t_k}, & \quad
\d_\tau D_z\EE_{k+1} &:= \frac{D_z\EE(z_{k+1})-D_z\EE(z_k) }{\Delta t_k},
& \quad & k = 1, ..., N(\tau) - 1,
\end{alignat*} 
as well as $\d_\tau \ell_0 = 0$, $\d_\tau z_1 = 0$, and $\d_\tau D_z\EE_1 = 0$.
By \cref{lem:timeprogress}, we have
\begin{equation}
\norm{ \d_\tau z_k}_\ZZ \leq C . \label{eq:dt_zk_bound}
\end{equation}
Now, on account of $-D_z\II(t_{k-1},z_{k}) \in \partial\RR(z_k-z_{k-1})$, we deduce from \eqref{eq:discreteSol_reformulation1} tested with $z_k-z_{k-1}$
that $0 \geq \dual{D_z\II(t_{k-1},z_k) - D_z\II(t_{k},z_{k+1})}{z_{k}-z_{k-1}}_{\ZZ^\ast,\ZZ}$.
Inserting the definitions of $\tilde{z}$ and $\theta(t)$, we thus obtain for $t \in [\tth{k-1},\tth{k})$:
\begin{align*}
R(t) %&= 2 (1-\theta(t)) \dual{D_z\II(t_{k-1},z_k)-D_z\II(t_{k},z_{k+1})}{\tilde{z}^\prime_\tau(t)}_{\ZZ^\ast,\ZZ} \\
&= 2 (t_k-t) \dual{(\Delta t_k)^{-1} [D_z\II(t_{k-1},z_k)-D_z\II(t_{k},z_{k+1})]}{\d_\tau z_{k+1}-\d_\tau z_k}_{\ZZ^\ast,\ZZ} \\
&\qquad  + 2 (t_k-t) \dual{(\Delta t_k)^{-1}[D_z\II(t_{k-1},z_k)-D_z\II(t_{k},z_{k+1})]}{\d_\tau z_k}_{\ZZ^\ast,\ZZ} \\
&\leq 2 (t_k-t) \dual{(\Delta t_k)^{-1} [D_z\II(t_{k-1},z_k)-D_z\II(t_{k},z_{k+1})]}{\d_\tau z_{k+1}-\d_\tau z_k}_{\ZZ^\ast,\ZZ} \\
&= 2 (t_k-t) \dual{-\d_\tau D_z\EE_{k+1} + \d_\tau \ell_k}{\d_\tau z_{k+1}-\d_\tau z_k}_{\ZZ^\ast,\ZZ} .
\end{align*}
Integrating then gives
\begin{align}
\int_0^T R(t) \d t %&\leq \sum_{k=1}^{N(\tau)} \int_{t_{k-1}}^{t_k} R(t) \d t \notag \\
&\leq \sum_{k=1}^{N(\tau)} (\Delta t_k)^2 \dual{-\d_\tau D_z\EE_{k+1} + \d_\tau \ell_k}{\d_\tau z_{k+1}-\d_\tau z_k}_{\ZZ^\ast,\ZZ} \notag \\
&\leq \tau^2 \sum_{k=1}^{N(\tau)} \dual{-\d_\tau D_z\EE_{k+1}}{\d_\tau z_{k+1}-\d_\tau z_k}_{\ZZ^\ast,\ZZ} 
 + \dual{\d_\tau \ell_k}{\d_\tau z_{k+1}-\d_\tau z_k}_{\ZZ^\ast,\ZZ} . \label{eq:R_estimate_thm1}
\end{align} 
For the terms involving $\ell$ we have
\begin{align*}
&\sum_{k=1}^{N(\tau)} \dual{\d_\tau \ell_k}{\d_\tau z_{k+1}-\d_\tau z_k}_{\VV^\ast,\VV} \\
&\qquad = \sum_{k=1}^{N(\tau)} \dual{\d_\tau \ell_k}{\d_\tau z_{k+1}}_{\VV^\ast,\VV} - \dual{\d_\tau \ell_k - \d_\tau \ell_{k-1}}{\d_\tau z_k}_{\VV^\ast,\VV} - \dual{\d_\tau \ell_{k-1}}{\d_\tau z_k}_{\VV^\ast,\VV} , \\ 
\end{align*}
where we used $\d_\tau \ell_0 = 0$. % and exploit that $z_1 = z_0$, since $0 \in \partial \RR(0) + D_z\II(t_0,z_0)$ (see Remark~\ref{rem:z1=z0}).
The second term is estimated analogously to $I_2$, exploiting the regularity of $\ell$ as well as the boundedness of $\norm{\d_\tau z_k}_\VV$ from \eqref{eq:dt_zk_bound}, which yields 
% note that d_tau z_1 = 0
\begin{align*}
&\hspace*{-0.5cm}\abs{\dual{\d_\tau \ell_k - \d_\tau \ell_{k-1}}{\d_\tau z_k}_{\VV^\ast,\VV}} \\
&=\Big|\int_0^1 \dual{\ell^\prime(t_{k-1}+s(t_k-t_{k-1})) - \ell^\prime(t_{k-2}+s(t_{k-1}-t_{k-2})) \d s}{\d_\tau z_k}_{\VV^\ast,\VV} \Big| \\
&\leq \norm{\ell^{\prime}}_{BV([t_{k-2},t_k];\VV)} \, \norm{\d_\tau z_k}_\VV \leq C \norm{\ell^{\prime}}_{BV([t_{k-2},t_k];\VV)} .
\end{align*}
Hence, thanks to $\d_\tau \ell_0 = 0$ and \eqref{eq:dt_zk_bound},
\begin{align}
&\sum_{k=1}^{N(\tau)} \dual{\d_\tau \ell_k}{d_t z_{k+1}-d_t z_k}_{\VV^\ast,\VV} \notag \\[-1ex]
&\leq \sum_{k=1}^{N(\tau)} \dual{\d_\tau \ell_k}{\d_\tau z_{k+1}}_{\VV^\ast,\VV} - \dual{\d_\tau \ell_{k-1}}{\d_\tau z_k}_{\VV^\ast,\VV} + C \norm{\ell^{\prime}}_{BV([t_{k-2},t_k];\VV)}  \notag \\
&\leq \dual{\d_\tau \ell_{N(\tau)-1}}{\d_\tau z_{N(\tau)}}_{\VV^\ast,\VV} + 2 C \norm{\ell^{\prime}}_{BV([0,T];\VV)} 
\leq C( \abs{\ell}_{Lip} +  \norm{\ell^{\prime}}_{BV([0,T];\VV)} ) . \label{eq:est-sum_dt-lk}
\end{align}
Now, for the terms involving $D_z\EE$, we first calculate
\begin{align*}
\dual{\d_\tau D_z\EE_{k+1}}{\d_\tau z_k}_{\ZZ^\ast,\ZZ}
&= \Big\langle \frac{D_z\EE(z_{k+1})-D_z\EE(z_{k})}{t_k-t_{k-1}} , \d_\tau z_k \Big\rangle_{\ZZ^\ast,\ZZ} \\
&= \int_0^1 \dual{D_z^2\EE(z_k+s(z_{k+1}-z_k))[\d_\tau z_{k+1}]}{\d_\tau z_k}_{\ZZ^\ast,\ZZ} \, \d s .
\end{align*}
Since $D_z^2\EE$ is symmetric, we obtain
\begin{align*}
&2 \int_0^1 \dual{D_z^2\EE(z_k+s(z_{k+1}-z_k))[\d_\tau z_{k+1}]}{\d_\tau z_k}_{\ZZ^\ast,\ZZ} \, \d s \\
&= - \int_0^1 \dual{D_z^2\EE(z_k+s(z_{k+1}-z_k))[\d_\tau z_{k+1}-\d_\tau z_k]}{\d_\tau z_{k+1}- \d_\tau z_k}_{\ZZ^\ast,\ZZ} \, \d s \\
%&\quad+ \int_0^1 \dual{D_z^2\EE(z_k+s(z_{k+1}-z_k))[\d_\tau z_{k+1}]}{\d_\tau z_{k+1}}_{\ZZ^\ast,\ZZ} \, \d s \\
%&\quad+ \int_0^1 \dual{D_z^2\EE(z_k+s(z_{k+1}-z_k))[\d_\tau z_{k}]}{\d_\tau z_k}_{\ZZ^\ast,\ZZ} \, \d s \\
%&\leq -\kappa \norm{\d_\tau z_{k+1} - \d_\tau z_k}_\ZZ^2 \\
&\quad+ \int_0^1 \dual{D_z^2\EE(z_k+s(z_{k+1}-z_k))[\d_\tau z_{k+1}]}{\d_\tau z_{k+1}}_{\ZZ^\ast,\ZZ} \, \d s \\
&\quad+ \int_0^1 \dual{(D_z^2\EE(z_k+s(z_{k+1}-z_k))- D_z^2\EE(z_{k-1}+s(z_{k}-z_{k-1}))) [\d_\tau z_{k}]}{\d_\tau z_k}_{\ZZ^\ast,\ZZ} \, \d s \\
&\quad+ \int_0^1 \dual{D_z^2\EE(z_{k-1}+s(z_{k}-z_{k-1}))[\d_\tau z_{k}]}{\d_\tau z_k}_{\ZZ^\ast,\ZZ} \, \d s
\end{align*}
%Thus we have 
%\begin{align*}
%&\hspace*{-1cm}\dual{\frac{D_z\EE(z_k)-D_z\EE(z_{k+1})}{t_k-t_{k-1}}}{d_t z_k} \\
%&\leq 1/2 \dual{\frac{D_z\EE(z_k)-D_z\EE(z_{k-1})}{t_{k-1}-t_{k-2}}}{d_t z_k}  \\
%&\qquad + 1/2 \dual{\frac{D_z\EE(z_{k+1})-D_z\EE(z_{k})}{t_k-t_{k-1}}}{d_t z_{k+1}}  \\
%&\qquad+ C \norm{d_t z_{k}}^2 \norm{z_{k+1}-z_{k-1}}
%\end{align*}
Thus, thanks to the convexity of $\EE$, we have 
\begin{align*}
\dual{\d_\tau D_z\EE_{k+1}}{\d_\tau z_k}_{\ZZ^\ast,\ZZ} &\leq \frac{1}{2} \dual{\d_\tau D_z\EE_k}{\d_\tau z_k}_{\ZZ^\ast,\ZZ} + \frac{1}{2} \dual{\d_\tau D_z\EE_{k+1}}{\d_\tau z_{k+1}}_{\ZZ^\ast,\ZZ} \\
&\qquad + \frac{1}{2}C \norm{\d_\tau z_{k}}_\ZZ^2 (\norm{z_{k+1}-z_{k}}_\ZZ + \norm{z_k-z_{k-1}}_\ZZ ) ,
\end{align*}
where we also used the regularity of $\EE$. Exploiting Proposition ~\ref{prop.finiteNumberSteps} and \eqref{eq:dt_zk_bound}, we eventually end up with 
%\begin{align*}
%&\hspace*{-1cm}\sum_{k=1}^{N(\tau)} \dual{\frac{D_z\EE(z_k)-D_z\EE(z_{k+1})}{t_k-t_{k-1}}}{d_t z_{k+1}-d_t z_k} \\
%&\leq \sum_{k=1}^{N(\tau)} \{ 1/2 \dual{\frac{D_z\EE(z_k)-D_z\EE(z_{k-1})}{t_{k-1}-t_{k-2}}}{d_t z_k}  \\
%&\qquad - 1/2 \dual{\frac{D_z\EE(z_{k+1})-D_z\EE(z_{k})}{t_k-t_{k-1}}}{d_t z_{k+1}}  \\
%&\qquad + C \norm{d_t z_{k}}^2 \norm{z_{k+1}-z_{k-1}} \} \\
%&\leq 1/2 \dual{\frac{D_z\EE(z_1)-D_z\EE(z_{0})}{t_{1}-t_{0}}}{d_t z_1} + C \\
%&\leq C
%\end{align*}
\begin{align*}
&\hspace*{-0.5cm}\sum_{k=1}^{N(\tau)} \dual{\d_\tau D_z\EE_{k+1}}{\d_\tau z_{k}}-\dual{\d_\tau D_z\EE_{k+1}}{\d_\tau z_{k+1}}_{\ZZ^\ast,\ZZ} \\
&\leq \frac{1}{2} \sum_{k=1}^{N(\tau)} \begin{aligned}[t] \{ &\dual{\d_\tau D_z\EE_{k}}{\d_\tau z_k}_{\ZZ^\ast,\ZZ} - \dual{\d_\tau D_z\EE_{k+1}}{\d_\tau z_{k+1}}_{\ZZ^\ast,\ZZ} \\
&+ C \norm{\d_\tau z_{k}}_\ZZ^2 (\norm{z_{k+1}-z_{k}}_\ZZ + \norm{z_k-z_{k-1}}_\ZZ ) \} \end{aligned} \\
&\leq C C_\Sigma + \frac{1}{2} \dual{\d_\tau D_z\EE_1}{\d_\tau z_1}_{\ZZ^\ast,\ZZ} - \frac{1}{2} \dual{\d_\tau D_z\EE_{N(\tau)+1}}{\d_\tau z_{N(\tau)+1}}_{\ZZ^\ast,\ZZ} \leq C .
\end{align*}
wherein the last estimate is due to Remark~\ref{rem:z1=z0}, i.e., $\dual{\d_\tau D_z\EE_1}{\d_\tau z_1} = 0$, and the convexity of $\EE$, that is $\dual{\d_\tau D_z\EE_{N(\tau)+1}}{\d_\tau z_{N(\tau)+1}} \geq 0$. Combining this with \eqref{eq:E_estimate_thm1}, \eqref{eq:est-sum_dt-lk} and \eqref{eq:R_estimate_thm1}, we have overall shown that 
\begin{equation}\label{eq:eq.EandR}
\int_0^T E(t) \d t + \int_0^T R(t) \d t \leq C \tau^2 .
\end{equation}
\textsl{Step 6: Obtain Convergence Rate by Gronwall Lemma}\\
Exploiting that $\gamma(t)/\kappa \geq \norm{\tilde{z}_\tau(t)-z(t)}_\ZZ^2$ in \eqref{eq:gamma-E-R_thm1}, one obtains
$\dot{\gamma}(t) \leq C ( \gamma(t) + E(t) + R(t) )$.
Integrating this and using Gronwall's Lemma as well as the estimates \eqref{eq:eq.EandR} on $E$ and $R$ yield
$\gamma(t) \leq (\gamma(0) + C \tau^2)\exp^{C t} \leq C (\gamma(0)+\tau^2)$.
Due to $\tilde{z}_\tau(0) = z(0) = z_0$, we have $\gamma(0) = 0$. 
Using another time the $\kappa$-uniform convexity of $\II$, we therefore finally obtain
\begin{equation}\label{eq:tildeerr}
    \norm{\tilde{z}_\tau(t)-z(t)}_\ZZ^2 \leq \gamma(t)/\kappa \leq C \tau^2 . 
\end{equation}
\textsl{Step 7: Comparing interpolants}\\
By $\hat{z}_\tau$ we denote the affine interpolation of the discrete approximations with stepsize $\tau$ in the artificial time, see \eqref{eq:affine-interpolants}. From \cref{lem:timeprogress}, we conclude that $\hat{t}_\tau(s)$ is monotonically increasing and $\hat{t}_\tau^\prime(s) \geq  1- \frac{\kappa-\delta}{\kappa} $ a.e. in $[0,\hat{S}_\tau]$. Thus, there exists a unique inverse function $s_\tau : [0,T] \to [0,\hat{S}_\tau]$ with $ 1 \leq s_\tau^\prime(t) \leq \frac{1}{1- \frac{\kappa-\delta}{\kappa}}$ a.e. in $[0,T]$. Given this inverse, one can define $\hat{z}_\tau$ as the retransformed affine interpolant, 
i.e., $z_\tau(t) := \hat{z}_\tau(s_\tau(t))$.
By elementary calculations, %which are carried out in the Appendix~\ref{sec:App.backtransformation}, 
the explicit formula for $z_\tau$ is:
\begin{gather*}
z_\tau(t) = z_{k-1}^\tau + \frac{t-t_{k-1}^\tau}{t_k^\tau-t_{k-1}^\tau}(z_k^\tau - z_{k-1}^\tau), \quad t \in [t_{k-1}^\tau,t_k^\tau),
\end{gather*}
i.e., $z_\tau$ is just the affine interpolant in the physical time. 
Comparing $z_\tau$ with $\tilde{z}_\tau$ from \eqref{eq:affineInterpol_thm1} results in
\begin{multline*}
\norm{z_\tau(t)-\tilde{z}_\tau(t)}_\ZZ 
= \norm{z_{k-1}^\tau + \theta(t) (z_k^\tau - z_{k-1}^\tau) - z_{k}^\tau - \theta(t) (z_{k+1}^\tau - z_{k}^\tau)}_\ZZ \\
\leq (1-\theta(t)) \norm{z_{k-1}^\tau-z_k^\tau}_\ZZ + \theta(t) \norm{z_k^\tau-z_{k+1}^\tau}_\ZZ 
\leq \tau .
\end{multline*}
where we exploited \eqref{eq:timeprogress} once more. Now, since $k \leq N(\tau)$ was arbitrary, we have $\norm{z_\tau(t)-\tilde{z}_\tau(t)}_\ZZ \leq  \tau$ for all $t \in [0,T]$. In combination with \eqref{eq:tildeerr}, this finally gives
$\norm{z_\tau(t)-z(t)}_\ZZ \leq  K \tau$,
which is the desired result. A careful analysis of the used estimates and the corresponding constants yields that $K$ provides the claimed dependencies.
\end{proof}

Some remarks and comments concerning the assertion of Theorem~\ref{thm:reverse_approx_small} and its proof are in order:

\begin{remark}\label{rem:uni-conv_estimates}
In preparation of Section~\ref{sec:LocConv} below, we note that the uniform convexity of the energy is only needed at three places in the above
analysis: firstly for the estimate in \eqref{eq:timeprogress}, 
secondly for the lower bound on $\gamma$ in \eqref{eq:gammaest}, and thirdly for the inequality
\begin{equation}
 \int_0^1 \dual{D_z^2\EE(z_k+s(z_{k+1}-z_k))[\d_\tau z_{k+1}-\d_\tau z_k]}{\d_\tau z_{k+1}- \d_\tau z_k}_{\ZZ^\ast,\ZZ}\, \d s 
 \geq 0 .\label{eq:auxA02} 
\end{equation}
However \eqref{eq:timeprogress} and \eqref{eq:auxA02} remain valid, if $\II(t_{k},\cdot)$ is only $\kappa$-uniformly convex on a ball
$B_\ZZ(z,\Delta)$ with radius $\Delta > \tau > 0$ and $z_k,z_{k+1} \in B_\ZZ(z,\Delta)$. 
To see this, note that \eqref{eq:timeprogress} follows from estimate \eqref{eq:eq.aux002}, see proof of Lemma~\ref{lem:timeprogress}, 
which itself is a consequence of $ \dual{D_z\II(t_{k},z_{k+1})-D_z\II(t_{k},z_{k})}{z_{k+1}-z_{k}}_{\ZZ^\ast,\ZZ} \geq \kappa \norm{z_{k+1}-z_{k}}_\ZZ^2 $.
This inequality, just as inequality \eqref{eq:auxA02}, only require that $z_k$ and $z_{k+1}$ lay in a region of uniform convexity of $\II$. 
The estimate on $\gamma$ finally necessitates that $ \tilde{z}_\tau(t) \in B_\ZZ(z(t),\Delta)$ and that $\II$ is uniformly convex on $B_\ZZ(z(t),\Delta)$ 
for all $ t \in [0,T]$, cf.~the definition of $\gamma$ in \eqref{eq:defgamma}. 
\end{remark}

\begin{remark}
    In view of the regularity of the differential solution, i.e., $z\in W^{1,\infty}(0,T;\ZZ)$, the convergence rate of $\Landau{\tau}$ in 
    Theorem~\ref{thm:reverse_approx_small} can be regarded as optimal, since the piecewise affine interpolation of the solution 
    does not yield a better convergence rate.
\end{remark}

\begin{remark}
    We expect that a spatial discretization can also be included in the above a priori estimates, following e.g.~the lines of 
    \cite{mpps:errorRIS}. This would however go beyond the scope of the paper and is subject to future research.
\end{remark}

\subsection{The General Case (w/o smallness assumption on $|\ell|_{Lip}$)}\label{sec:GlobConvGen}
Let us now turn to the general case, where the Lipschitz-constant does not necessarily fulfill $|\ell|_{Lip} < \kappa $. In this case, we can no longer guarantee that the algorithm always makes progress w.r.t.~time, which implies that the back-transformation onto the physical time need not exist. In order to handle these cases, we will neglect all iterates for which the time-update does not proceed. Consequently, we need to ensure that the algorithm only needs a finite number of iterates (independent of $\tau$) to reach a new local minimum in the interior of $B_\VV(z_{k-1},\tau)$ so that, after a maximum number of $M$ iterates, the algorithm again performs a timestep. This is part of the next two Lemmata. 

\begin{lemma}\label{lem:maxNum_noSteps}
Let \cref{ass:globconv} hold. Then there exists $m \in \N$, independent of $\tau$, such that, for all iterates $k \in \N$, $k < \hat{N}(\tau)$, there exists an index $\hat{k} \in [k,k+m]$ so that $0 \in \partial\RR(0)+D_z\II(t_{\hat{k}-1},z_{\hat{k}})$,
i.e., after at most $m$ iterations, the iterate is again locally stable.
\end{lemma}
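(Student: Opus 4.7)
The plan is to combine the descent estimate \eqref{eq:eq.aux002} from Lemma~\ref{lem:general_estimate} with the complementarity \eqref{eq:opt.prop01} and the characterization \eqref{eq:lambda_dist_expression}, which together imply that an iterate $z_j$ is locally stable if and only if $\lambda_j = 0$, and that whenever $\lambda_j > 0$ the constraint is active, $\|z_j - z_{j-1}\|_\VV = \tau$, and the time update \eqref{eq:tupdate} yields $t_j = t_{j-1}$. I would then bound the length of any maximal consecutive run of non-stable iterates uniformly in $\tau$, noting that $z_1 = z_0$ is always stable by Remark~\ref{rem:z1=z0}, so such runs always have a stable predecessor.

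First, I would show that the multiplier at the first non-stable iterate following a stable one is bounded by $|\ell|_{Lip}$. Suppose $z_{k_0}$ is stable, so $\lambda_{k_0} = 0$, and $z_{k_0+1}$ is not. Plugging $\|z_{k_0+1} - z_{k_0}\|_\VV = \tau$, $\lambda_{k_0} = 0$, and the universal bound $t_{k_0} - t_{k_0-1} \leq \tau$ into \eqref{eq:eq.aux002} yields
\begin{equation*}
\lambda_{k_0+1}\tau^2 + \kappa \,\|z_{k_0+1}-z_{k_0}\|_\ZZ^2 \;\leq\; |\ell|_{Lip}\,(t_{k_0}-t_{k_0-1})\,\tau \;\leq\; |\ell|_{Lip}\,\tau^2,
\end{equation*}
and therefore $\lambda_{k_0+1} \leq |\ell|_{Lip}$.

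Next, within a non-stable run the physical time is frozen, so applying \eqref{eq:eq.aux002} at each subsequent index $k_0+j$, $j\geq 1$, and exploiting that non-stability of $z_{k_0+j+1}$ forces $\|z_{k_0+j+1} - z_{k_0+j}\|_\ZZ \geq \|z_{k_0+j+1} - z_{k_0+j}\|_\VV = \tau$ via the embedding $\ZZ \hookrightarrow \VV$ with constant one, one obtains the strict decrease
\begin{equation*}
\lambda_{k_0+j+1} \;\leq\; \lambda_{k_0+j} - \kappa \,\frac{\|z_{k_0+j+1}-z_{k_0+j}\|_\ZZ^2}{\tau^2} \;\leq\; \lambda_{k_0+j} - \kappa.
\end{equation*}
A straightforward induction then gives $\lambda_{k_0+1+j} \leq |\ell|_{Lip} - j\kappa$, so the multiplier must become non-positive after at most $\lceil |\ell|_{Lip}/\kappa \rceil$ further steps, meaning the corresponding iterate is locally stable. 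Consequently, every maximal non-stable block has length at most $\lceil |\ell|_{Lip}/\kappa \rceil$, and setting $m := \lceil |\ell|_{Lip}/\kappa \rceil$ one finds $\hat k \in [k, k+m]$ with $z_{\hat k}$ locally stable for every admissible $k$: if $z_k$ itself is stable pick $\hat k = k$, otherwise $z_k$ lies inside a non-stable block whose terminating stable iterate sits at most $m$ positions ahead.

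The main subtlety is the index bookkeeping across the transition from a stable to a non-stable iterate: the first step exploits $\lambda_{k_0}=0$ to close the estimate without residual multiplier terms, whereas the subsequent steps crucially rely on the time freezing $t_{k_0+j} = t_{k_0+j-1}$ to eliminate the load-dependent term in \eqref{eq:eq.aux002}. Since the final value of $m$ depends only on $\kappa$ and $|\ell|_{Lip}$, it is manifestly independent of $\tau$, as claimed.
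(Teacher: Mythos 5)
Your proof is correct and follows essentially the same strategy as the paper: bound the multiplier at the first non-stable iterate after a stable one by $|\ell|_{Lip}$ via \eqref{eq:eq.aux002}, then use the frozen time and the active constraint to show the multipliers decrease by at least $\kappa$ per step, yielding a $\tau$-independent bound on the block length. The only (immaterial) differences are that you organize the argument around locally stable iterates rather than iterates where the time progresses, and you obtain the slightly sharper constant $m=\lceil |\ell|_{Lip}/\kappa\rceil$ instead of the paper's $\lceil |\ell|_{Lip}/\kappa\rceil+1$.
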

\begin{proof}
%%%%%%%%%
%(Let $k \in \N$ be given. In order to simplify notation we will suppress the index $k$. In this proof we thus write $z_{l+1}$ for the iterate $z_{k+l+1}$.) \\
W.l.o.g. let $k$ be the last iterate with $t_k-t_{k-1} >0$ (otherwise we choose $\tilde{k}<k$ as the last iterate, where a time-step took place and apply the same argumentation with $\tilde{k}$ instead of $k$, which will then give the same $m$).
By Remark~\ref{rem:z1=z0} we have $t_1-t_0>0$ so that there always exists such an index $k \leq N(\tau)$. We will first show that $\lambda_{k+1}$ is bounded by the Lipschitz-constant of $\ell$. Afterwards, we will show that the sequence $\{\lambda_{k+l}\}_{l\geq 1}$ is monotonically decreasing by some constant factor. Since all multipliers are non-negative, this will lead to $\lambda_{k+m} = 0$, which yields the desired result. \\
\textsl{Step 1: Boundedness of $\lambda_{k+1}$} \\
Since $t_k-t_{k-1}>0$, we have $\lambda_k = 0$ by \eqref{eq:tupdate} and \eqref{eq:opt.prop01} so that \cref{lem:general_estimate} implies 
\begin{multline*} 
0 \geq \kappa \norm{z_{k+1}-z_{k}}_\ZZ^2 - |\ell|_{Lip} (t_{k}-t_{k-1}) \norm{z_{k+1}-z_{k}}_\VV + \lambda_{k+1}  \tau^2 \\
\geq - |\ell|_{Lip} (t_{k}-t_{k-1}) \norm{z_{k+1}-z_{k}}_\VV + \lambda_{k+1}  \tau^2 
\geq - |\ell|_{Lip} \tau^2 + \lambda_{k+1} \tau^2
\end{multline*}
so that indeed $\lambda_{k+1} \leq |\ell|_{Lip}$.\\
\textsl{Step 2: Monotonicity of $\{\lambda_{k+l}\}_{l \geq 1}$}\\
To proceed, let $l\geq 2$ iterations be given without time-progress (otherwise $m=2$), which means that 
\begin{gather}
t_{k+l} = t_{k+l-1} = \dots = t_k  \label{eq:eq.aux003} \\
\text{and} \quad \norm{z_{k+l}-z_{k+l-1}}_\VV = \norm{z_{k+l-1}-z_{k+l-2}}_\VV = \dots = \tau . \label{eq:eq.aux006}
\end{gather}
We will now show that the sequence $\{\lambda_{k+l}\}_{l\geq 1}$ is monotonically decreasing by some constant factor. Together with \eqref{eq:eq.aux002} for the index $k+l$, \eqref{eq:eq.aux003} implies
\[ 0 \geq \kappa \norm{z_{k+l}-z_{k+l-1}}_\ZZ^2 + \lambda_{k+l} \tau^2 - \lambda_{k+l-1} \tau^2  . \]
Using the embedding $\ZZ \embeds \VV$ and inserting \eqref{eq:eq.aux006}, we obtain
$0 \geq \kappa \tau^2 + \lambda_{k+l} \tau^2 - \lambda_{k+l-1} \tau^2$.
Combining this with the bound on $\lambda_{k+1}$ from above and rearranging terms then yields
\[ \lambda_{k+l} \leq \lambda_{k+l-1} - \kappa \quad \Longrightarrow \quad \lambda_{k+l} \leq \lambda_{k+1} - (l-1) \kappa \leq |\ell|_{Lip} - (l-1) \kappa ,\]
which finally gives that $\lambda_{k+m} = 0$ for $m = \lceil |\ell|_{Lip}/\kappa \rceil +1$ due to the non-negativity of the multipliers. Thus, by \eqref{eq:discrOptCon_DiffInc}, we have 
$0 \in \partial\RR(0) + D_z\II(t_{k+m-1},z_{k+m})$.
%Therefor, it either holds $\norm{z_{k+\tilde{m}}-z_{k+\tilde{m}-1}}_\VV < \tau$, which implies that $t_{k+\tilde{m}}-t_{k+\tilde{m}-1} > 0$, or $\norm{z_{k+\tilde{m}}-z_{k+\tilde{m}-1}}_\VV = \tau$ and \eqref{eq:auxA07} implies that $t_{k+\tilde{m}+1}-t_{k+\tilde{m}}>0$ due to \eqref{eq:lem.smallstep1} from Lemma~\ref{lem:smallsteps} and the time-update \eqref{eq:tupdate}. In both cases, we have proven the assertion for $m = \tilde{m}+1$.
\end{proof}

\begin{lemma}\label{lem:maxNum_noSteps2}
Let \cref{ass:globconv} hold. Then there exists $M \in \N$, independent of $\tau$ and $\varepsilon$, such that, for all iterates $k \in \N$, $k < N(\tau)$, there exists an index $\hat{k} \in [k,k+M]$ so that $t_{\hat{k}+1} - t_{\hat{k}} > 0$, 
i.e., after at most $M$ iterations, the algorithm performs a timestep.
\end{lemma}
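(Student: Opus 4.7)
The plan is to derive this statement as a short corollary of Lemma~\ref{lem:maxNum_noSteps} combined with Lemma~\ref{lem:estimate_consec_step}, thereby avoiding any need to rerun the multiplier decrease argument. The key observation is that whenever the scheme produces a locally stable iterate \emph{after} a stretch of no-progress steps, the subsequent iterate must coincide with it, so that the time update \eqref{eq:tupdate} is then forced to advance strictly.

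Concretely, fix $k<N(\tau)$ and let $m$ denote the constant provided by Lemma~\ref{lem:maxNum_noSteps}; set $\hat{k}:=k$ immediately if $t_{k+1}>t_k$. Otherwise I let $k'\in\{1,\dots,k\}$ be the largest index with $t_{k'}>t_{k'-1}$; such $k'$ exists by Remark~\ref{rem:z1=z0}, which guarantees $t_1-t_0=\tau>0$, and by maximality one has $t_{k'}=t_{k'+1}=\dots=t_k$. Running the chain of inequalities from the proof of Lemma~\ref{lem:maxNum_noSteps} starting from $k'$ in place of $k$---as expressly permitted by the parenthetical at the beginning of that proof---yields $\lambda_{k'+m}=0$, and hence via \eqref{eq:discrOptCon_DiffInc} local stability $0\in\partial\RR(0)+D_z\II(t_{k'+m-1},z_{k'+m})$. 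Since no time progress took place after $k'$, one has $t_{k'+m}=t_{k'+m-1}$, so Lemma~\ref{lem:estimate_consec_step} forces $\|z_{k'+m+1}-z_{k'+m}\|_\ZZ=0$; the time update \eqref{eq:tupdate} then produces $t_{k'+m+1}=t_{k'+m}+\tau>t_{k'+m}$.

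It remains to locate $\hat{k}:=k'+m$ inside $[k,k+M]$. Since $k'\le k$, certainly $\hat{k}\le k+m$, while $\hat{k}<k$ would mean that strict time progress already occurred at iterate $k'+m+1\le k$, contradicting the maximality of $k'$. Hence $\hat{k}\in[k,k+m]$, and setting $M:=m=\lceil|\ell|_{Lip}/\kappa\rceil+1$, which depends only on $|\ell|_{Lip}$ and $\kappa$ and is therefore independent of $\tau$ and $\varepsilon$, completes the argument. The only even mildly subtle point is checking that the proof of Lemma~\ref{lem:maxNum_noSteps} really can be re-anchored at the auxiliary index $k'$ instead of its distinguished ``last time-progress iterate''; this is, however, exactly the flexibility built into that proof and calls for no new estimate.
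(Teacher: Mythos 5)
Your proposal is correct and rests on essentially the same mechanism as the paper's proof: \cref{lem:maxNum_noSteps} delivers a locally stable iterate within $m$ steps, and then \cref{lem:estimate_consec_step} together with the time update \eqref{eq:tupdate} forces the very next increment to vanish and hence the next time step to equal $\tau$; the paper merely dichotomizes on whether $\norm{z_{k+m}-z_{k+m-1}}_\VV<\tau$ instead of re-anchoring at the last progress index $k'$. One small caveat: your assertions that the multiplier chain runs uninterrupted up to $k'+m$ (so that $\lambda_{k'+m}=0$) and that $t_{k'+m}=t_{k'+m-1}$ are only guaranteed for indices up to $k+1$; if a time step occurs at some iterate $j\in(k+1,k'+m]$, the decrease of $\lambda$ restarts there and both claims may fail — but then $\hat k=j-1\in[k,k+m-1]$ settles the lemma immediately, so adding this trivial case closes the argument.
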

\begin{proof}
From \cref{lem:maxNum_noSteps} there exists $m \in \N$ such that 
\begin{equation}\label{eq:auxA07} 
0 \in \partial\RR(0) + D_z\II(t_{k+m-1},z_{k+m}) . 
\end{equation} 
Therefore, it either holds $\norm{z_{k+m}-z_{k+m-1}}_\VV < \tau$, which implies that $t_{k+m}-t_{k+m-1} > 0$, or $\norm{z_{k+m}-z_{k+m-1}}_\VV = \tau$ and \eqref{eq:auxA07} in combination with the time-update \eqref{eq:tupdate} implies that 
\begin{equation*} 
\norm{z_{k+m}-z_{k+m-1}}_\ZZ \leq \frac{|\ell|_{Lip}}{\kappa} (t_{k+m} - t_{k+m-1}) 
= \frac{|\ell|_{Lip}}{\kappa} ( \tau - \norm{z_{k+m}-z_{k+m-1}}_\VV ) = 0 .
\end{equation*}
Again, from the time-update \eqref{eq:tupdate}, it follows $t_{k+m+1}-t_{k+m}=\tau>0$. In both cases, we have proven the assertion for $M = m+1$.
\end{proof}

%%%%% corollary
%\noindent
%From the last two Lemmas we directly obtain the following
%\begin{Corollary}\label{cor:maxNum_smallSteps}
%Let $\II$ be $\kappa$-uniform convex and $0 < \varepsilon \leq \frac{\kappa}{\kappa+|\ell|_{Lip}}$. Then there exists $M \in \N$ independent of $\tau$ and $\varepsilon$, such that 
%\[t_{k+M} - t_{k} \geq \varepsilon \tau \] 
%for all iterates $k \in \N, \, k \leq N(\tau)$.
%\end{Corollary}
%\begin{proof}
%Combine Lemma~\ref{lem:smallsteps} and Lemma~\ref{lem:maxNum_noSteps}. This gives $M = m+1$.
%\end{proof}
%
%\noindent
%Thus, after a maximum of $M$ iterations, we have again a timestep which is larger than $\varepsilon \tau$. \\

\noindent
We finally need an estimate for the iterates in the stronger $\ZZ$-norm, in order to get a uniform bound for the derivative of the linear-interpolants.

%%%% lemma 
\begin{lemma}\label{lem:bound_iter_Z}
Let \cref{ass:globconv} be satisfied. Then there exists a constant $C = C(|\ell|_{Lip},\kappa)>0$ such that
$\norm{z_{k} - z_{k-1}}_\ZZ \leq C \, \tau$ for all iterations $k \leq \hat{N}(\tau)$. 
\end{lemma}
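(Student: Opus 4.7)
The plan is to exploit the inequality \eqref{eq:eq.aux002} of Lemma~\ref{lem:general_estimate} together with the constraints $\|z_{k}-z_{k-1}\|_\VV \le \tau$ and $t_k - t_{k-1} \le \tau$ imposed by the scheme, and to bound the multipliers $\lambda_k$ uniformly in terms of $|\ell|_{Lip}$ and $\kappa$.

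First, I would establish that $\lambda_k \le |\ell|_{Lip}$ for every $k\le \hat{N}(\tau)$. The argument is exactly the one already carried out in the proof of Lemma~\ref{lem:maxNum_noSteps}: whenever $t_k > t_{k-1}$, the complementarity condition \eqref{eq:opt.prop01} combined with \eqref{eq:tupdate} forces $\|z_k - z_{k-1}\|_\VV < \tau$ and hence $\lambda_k = 0$. Inserting $\lambda_k = 0$ into \eqref{eq:eq.aux002} applied at index $k$ and using $t_k - t_{k-1}\le\tau$ and $\|z_{k+1}-z_k\|_\VV\le\tau$ yields $\lambda_{k+1}\tau^2 \le |\ell|_{Lip}\tau^2$, i.e., $\lambda_{k+1}\le|\ell|_{Lip}$. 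During subsequent iterations without time progress, Step~2 of Lemma~\ref{lem:maxNum_noSteps} shows that the sequence $\{\lambda_{k+l}\}_{l\ge 1}$ decreases by at least $\kappa$ per iterate, so the bound $\lambda_k\le|\ell|_{Lip}$ propagates through the whole iteration.

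Second, with this uniform multiplier bound, I would rewrite \eqref{eq:eq.aux002} (shifting the index $k \mapsto k-1$) as
\begin{equation*}
\kappa\,\|z_k - z_{k-1}\|_\ZZ^2 \le |\ell|_{Lip}(t_{k-1}-t_{k-2})\,\|z_k-z_{k-1}\|_\VV + (\lambda_{k-1}-\lambda_k)\tau^2.
\end{equation*}
Using $t_{k-1}-t_{k-2}\le\tau$, $\|z_k-z_{k-1}\|_\VV\le\tau$, $\lambda_k\ge0$ and $\lambda_{k-1}\le|\ell|_{Lip}$, I obtain
\begin{equation*}
\|z_k - z_{k-1}\|_\ZZ^2 \le \frac{2|\ell|_{Lip}}{\kappa}\,\tau^2,
\end{equation*}
which is the claimed estimate with $C = \sqrt{2|\ell|_{Lip}/\kappa}$. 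The cases $k=1$ (where $z_1=z_0$ by Remark~\ref{rem:z1=z0}) and the boundary iterate at $\hat{N}(\tau)$ follow immediately, the latter by the same application of \eqref{eq:eq.aux002} together with the convention on $\lambda_{\hat{N}(\tau)}$.

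The only substantive step is the uniform bound on the multipliers $\lambda_k$; the remainder is a direct manipulation of the already established inequality \eqref{eq:eq.aux002}. Since the argument for $\lambda_k \le |\ell|_{Lip}$ is essentially a reorganization of what was done in Lemma~\ref{lem:maxNum_noSteps}, I expect no serious technical obstacle.
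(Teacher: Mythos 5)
Your proposal is correct and follows essentially the same route as the paper: it bounds the multipliers by $|\ell|_{Lip}$ via the argument in Lemma~\ref{lem:maxNum_noSteps} and then applies the index-shifted inequality \eqref{eq:eq.aux002} together with $\lambda_k\ge 0$ and $\lambda_{k-1}\le|\ell|_{Lip}$, yielding the same constant $\sqrt{2|\ell|_{Lip}/\kappa}$. The treatment of $k=1$ via Remark~\ref{rem:z1=z0} also matches the paper.
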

\begin{proof} For $k=1$ this easily follows from Remark~\ref{rem:z1=z0}. Hence, let $k \geq 2$. In the proof of \cref{lem:maxNum_noSteps}, we have seen that the multipliers $\lambda_k$ are bounded by $|\ell|_{Lip}$ for all $k \leq \hat{N}(\tau)$. Another application of \cref{lem:general_estimate} thus gives
\begin{multline*}
\kappa \norm{z_{k}-z_{k-1}}_\ZZ^2 
\leq |\ell|_{Lip} (t_{k-1}-t_{k-2}) \norm{z_{k}-z_{k-1}}_\VV - (\lambda_{k} - \lambda_{k-1}) \tau^2 \\
\leq |\ell|_{Lip} \tau^2 + \lambda_{k-1}  \tau^2  
\leq 2 |\ell|_{Lip} \tau^2 ,
\end{multline*} 
where we exploited the positivity of the multiplier $\lambda_{k}$.
\end{proof}

As mentioned above, the time-discrete parametrized solution will only include the iterates for which the time-update proceeds. Thus we set
\begin{itemize}
\item $N(\tau) =$ number of iterations to reach the end-time $T$ (with stepsize $\tau$)
\item $\hat{N}(\tau) =$ number of iterations to reach the final locally stable state $z_{\hat{N}(\tau)}$ (see Remark~\ref{rem:hatN})
\item $\NN(\tau) := \{ k \in \{1,\dots,N(\tau)\} \, : \, t_{k}-t_{k-1} > 0 \} \cup \{0,\hat{N}(\tau)\}$
\end{itemize}

In what follows, the iterations in $\NN(\tau)$ are numbered from $1$ to $|\NN(\tau)|$ and the corresponding mapping is denoted by $\kindex$, i.e.,
\begin{align*}
\kindex : \{0,1,\dots,|\NN(\tau)| \} \to \NN(\tau) \quad \text{so that} \quad \NN(\tau) = \{\kindex(0), \kindex(1) , \dots, \kindex(\NN(\tau)) \}.
\end{align*}
Therewith, we define for $t \in [t_{\kindex(j-1)},t_{\kindex(j)})$ 
\begin{equation*} 
\tilde{z}_\tau(t) = z_{\kindex(j)} + \frac{t-t_{\kindex(j-1)}}{t_{\kindex(j)}-t_{\kindex(j-1)}} \left( z_{\kindex(j+1)}-z_{\kindex(j)} \right) , \quad \tilde{z}_\tau(T) = z_{\hat{N}(\tau)}
\end{equation*}
as well as $\zcu{}(t) = z_{\kindex(j)}$, $\tcl{}(t) = t_{\kindex(j)-1}$. 
%\begin{itemize}
%\item $\kindex : \{0,1,\dots,|\NN(\tau)| \} \to \NN(\tau),$ this mapping returns to each index $j$ from the index set $\{0,1,\dots,|\NN(\tau)| \}$ the corresponding iteration number $\kindex(j)$ from the set of timesteps $\NN(\tau)$ at which the $j$-th timestep accured. 
%\item For $t \in [t_{\kindex(j-1)},t_{\kindex(j)})$ define
%\[ \tilde{z}_\tau(t) = z_{\kindex(j)} + \frac{t-t_{\kindex(j-1)}}{t_{\kindex(j)}-t_{\kindex(j-1)}} \left( z_{\kindex(j+1)}-z_{\kindex(j)} \right) , \quad \tilde{z}_\tau(T) = z_{\hat{N}(\tau)} \]
%as well as 
%\[ \zcu{}(t) = z_{\kindex(j)}, \quad \tcl{}(t) = t_{\kindex(j)-1}. \]
%\end{itemize}
\noindent Note that it holds
\begin{equation}\label{eq:auxA09}
t_{k} = \dots = t_{\kindex(j-1)}  \quad \forall k \in \{\kindex(j-1),\kindex(j-1)+1,\dots,\kindex(j)-1\}
\end{equation}
and consequently
\begin{equation}
\quad 0 \in \partial\RR(0) + D_z\II(t_{\kindex(j)-1},z_{\kindex(j)}) = \partial\RR(0) + D_z\II(\tcl{}(t),\zcu{}(t)). \label{eq:auxA06}
\end{equation}
Moreover we have the following estimates:

%%%% lemma
\begin{lemma}\label{lem:bound_z'_nosmall}
Let \cref{ass:globconv} and $-D_z\II(0,z_0) \in \partial\RR(0)$ hold. Then there exists constants $M \in \N$ and $C_1,C_2>0$ independent of $\tau$ and $\varepsilon$ so that
\begin{alignat}{3}
\kindex(j) - \kindex(j-1) &\leq M & \quad &\forall j = 1,\dots,|\NN(\tau)| \label{eq:estimate_numSmallSteps}\\
\norm{(\tilde{z}_\tau)^\prime(t)}_\ZZ &\leq C_1 & \quad &\forall_{a.a.}\, t \in [0,T], \label{eq:bound_discrete-derivative} \\
\norm{\tilde{z}_\tau(t)-\zcu{}(t)}_\ZZ &\leq C_2 \tau & \quad &\forall \, t \in [0,T], \label{eq:bound_difference-interpolants} \\
\abs{t-\tcl{}(t)} &\leq \tau & \quad &\forall \, t \in [0,T]. \label{eq:bound_backtrafo-time}
\end{alignat}
\end{lemma}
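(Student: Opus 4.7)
My plan is to dispatch the four estimates in order, with the bound \eqref{eq:bound_discrete-derivative} on the discrete derivative being the only nontrivial one. Estimate \eqref{eq:estimate_numSmallSteps} is immediate from \cref{lem:maxNum_noSteps2}: applying it at $k=\kindex(j-1)$ produces some $\hat k\in[\kindex(j-1),\kindex(j-1)+M]$ with $t_{\hat k+1}-t_{\hat k}>0$, hence $\hat k+1\in\NN(\tau)$, and the minimality of $\kindex(j)$ in $\NN(\tau)$ among indices larger than $\kindex(j-1)$ gives $\kindex(j)\le\kindex(j-1)+M+1$ (after relabeling the constant). Estimate \eqref{eq:bound_backtrafo-time} is equally direct: for $t\in[t_{\kindex(j-1)},t_{\kindex(j)})$, \eqref{eq:auxA09} identifies $\tcl{}(t)=t_{\kindex(j)-1}=t_{\kindex(j-1)}$, and the time-update \eqref{eq:tupdate} yields $t_{\kindex(j)}-t_{\kindex(j-1)}=\tau-\norm{z_{\kindex(j)}-z_{\kindex(j)-1}}_\VV\le\tau$.

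For \eqref{eq:bound_difference-interpolants}, I would use the affine formula to write $\tilde z_\tau(t)-\zcu{}(t)=\theta(t)(z_{\kindex(j+1)}-z_{\kindex(j)})$ with $\theta(t)\in[0,1]$. Telescoping the difference $z_{\kindex(j+1)}-z_{\kindex(j)}$ over at most $\kindex(j+1)-\kindex(j)\le M$ consecutive increments (by the already-proved \eqref{eq:estimate_numSmallSteps}) and applying \cref{lem:bound_iter_Z} to each increment gives $\norm{\tilde z_\tau(t)-\zcu{}(t)}_\ZZ\le MC\tau=:C_2\tau$.

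The main obstacle is \eqref{eq:bound_discrete-derivative}. On the interval $[t_{\kindex(j-1)},t_{\kindex(j)})$ the derivative is
\[
 (\tilde z_\tau)'(t)=\frac{z_{\kindex(j+1)}-z_{\kindex(j)}}{t_{\kindex(j)}-t_{\kindex(j-1)}},
\]
so I need to control both numerator and denominator, and the delicate part is a uniform lower bound $t_{\kindex(j)}-t_{\kindex(j-1)}\gtrsim\tau$, which is not automatic. The key is that \eqref{eq:auxA06} provides local stability of $z_{\kindex(j)}$ at $t_{\kindex(j)-1}=t_{\kindex(j-1)}$, so \cref{lem:estimate_consec_step} applied at $k=\kindex(j)$ yields
\[
 \norm{z_{\kindex(j)+1}-z_{\kindex(j)}}_\ZZ\le\tfrac{|\ell|_{Lip}}{\kappa}\bigl(t_{\kindex(j)}-t_{\kindex(j-1)}\bigr).\qquad(\ast)
\]
I would then split into two cases. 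If $\kindex(j+1)=\kindex(j)+1$, then $(\ast)$ is exactly the numerator and the derivative is bounded by $|\ell|_{Lip}/\kappa$. Otherwise $\kindex(j+1)>\kindex(j)+1$, so step $\kindex(j)+1$ is stuck ($t_{\kindex(j)+1}=t_{\kindex(j)}$), which by \eqref{eq:tupdate} forces $\norm{z_{\kindex(j)+1}-z_{\kindex(j)}}_\VV=\tau$. Combining this with $(\ast)$ and the embedding $\ZZ\embeds\VV$ with constant one gives the desired lower bound $t_{\kindex(j)}-t_{\kindex(j-1)}\ge\kappa\tau/|\ell|_{Lip}$. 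The numerator is bounded by $MC\tau$ as in estimate \eqref{eq:bound_difference-interpolants}, so the ratio is at most $MC|\ell|_{Lip}/\kappa$. Taking $C_1$ to be the maximum of the two case-bounds yields the claim.
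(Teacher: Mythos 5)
Your proposal is correct and follows essentially the same route as the paper: the key ingredients are identical, namely the telescoping of $z_{\kindex(j+1)}-z_{\kindex(j)}$ over at most $M$ increments each controlled by \cref{lem:bound_iter_Z}, and the lower bound on the denominator $t_{\kindex(j)}-t_{\kindex(j-1)}$ obtained from the local stability \eqref{eq:auxA06} via \cref{lem:estimate_consec_step}; the paper merely organizes the case split by the size of $t_{\kindex(j)}-t_{\kindex(j-1)}$ relative to $\varepsilon\tau$ rather than by whether $\kindex(j+1)=\kindex(j)+1$. The only imprecision is in your Case B when $t_{\kindex(j)}=T$: there the $\min\{\cdot,T\}$ in \eqref{eq:tupdate} means a frozen time does not by itself force $\norm{z_{\kindex(j)+1}-z_{\kindex(j)}}_\VV=\tau$, and one must instead invoke the complementarity condition \eqref{eq:opt.prop01} (if the norm were $<\tau$ then $\lambda_{\kindex(j)+1}=0$, so $z_{\kindex(j)+1}$ would be locally stable and $\kindex(j+1)=\hat{N}(\tau)=\kindex(j)+1$, contradicting Case B), exactly as the paper does in its separate treatment of the final-time case.
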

\begin{proof} The first statement is a direct consequence of \cref{lem:maxNum_noSteps}. Let $\varepsilon := \frac{\kappa}{\kappa+|\ell|_{Lip}} \leq 1$. In order to estimate the derivative of the affine interpolants, let $j \in \{1,\dots,|\NN(\tau)|-1 \}$. We then distinguish the following two cases:
\begin{enumerate}[label=\roman*)]
\item If $(t_{\kindex(j)}-t_{\kindex(j-1)}) \geq \varepsilon \tau$ then
\begin{equation}\label{eq:tildezprime1}
\vnorm{\frac{ z_{\kindex(j+1)}-z_{\kindex(j)}}{t_{\kindex(j)}-t_{\kindex(j-1)}} }_\ZZ 
%\leq \frac{ \norm{z_{\kindex(j)}-z_{\kindex(j-1)}}_\ZZ}{\varepsilon \tau} 
\leq \sum_{i=\kindex(j-1)}^{\kindex(j)-1}\frac{ \norm{z_{i+1}-z_{i}}_\ZZ}{\varepsilon \tau}
\leq \frac{M C}{\varepsilon} .
\end{equation}
\item Otherwise $\varepsilon \tau > (t_{\kindex(j)}-t_{\kindex(j-1)}) > 0$. Since $\kindex(j) \in \mathcal{N}(\tau)$, the complementarity condition \eqref{eq:opt.prop01} and the time-update \eqref{eq:tupdate} imply $\lambda_{\kindex(j)} = 0$. Consequently, \cref{lem:estimate_consec_step} in combination with \eqref{eq:auxA09} give
\begin{equation} \norm{z_{\kindex(j)+1}-z_{\kindex(j)}}_\ZZ \leq \frac{|\ell|_{Lip}}{\kappa} (t_{\kindex(j)}-t_{\kindex(j)-1}) = \frac{|\ell|_{Lip}}{\kappa} (t_{\kindex(j)}-t_{\kindex(j-1)}) . \label{eq:auxA08} \end{equation}
%Due to the time-update \eqref{eq:tupdate} and the embedding $\ZZ \embeds \VV$ we thus either have
%\[ t_{\iota(k)+1} - t_{\iota(k)} = \tau - \norm{z_{\iota(k)+1}-z_{\iota(k)}}_\VV \geq \varepsilon \tau > 0 \] 
%or $ t_{\iota(k)} = t_{N(\tau)} = T$, which, in both cases, implies $ \iota(k+1) = \iota(k)+1$.
Therefore, if $t_{\kindex(j)} < T$, then the time update \eqref{eq:tupdate} and the embedding $\ZZ \embeds \VV$ give 
\[ t_{\kindex(j)+1}-t_{\kindex(j)} = \tau - \norm{z_{\kindex(j)+1}-z_{\kindex(j)}}_\VV \geq (1-\frac{|\ell|_{Lip}}{\kappa} \varepsilon) \tau = \varepsilon \tau >0 \]
and consequently, $\kindex(j+1) = \kindex(j)+1$. If $ t_{\kindex(j)} = t_{N(\tau)} = T$, then \eqref{eq:auxA08} implies 
\[ \norm{z_{\kindex(j)+1}-z_{\kindex(j)}}_\VV \leq \frac{|\ell|_{Lip}}{\kappa} \varepsilon \tau < \tau \]
so that $z_{\kindex(j)+1}$ is locally stable, which in turn yields $\hat{N}(\tau) = \kindex(j)+1$ and hence $\kindex(j+1) = \hat{N}(\tau) = \kindex(j)+1$. Thus, in both cases, $\kindex(j+1) = \kindex(j)+1$ and consequently, \eqref{eq:auxA08} yields
\begin{equation}\label{eq:tildezprime2}
\vnorm{\frac{ z_{\kindex(j+1)}-z_{\kindex(j)}}{t_{\kindex(j)}-t_{\kindex(j-1)}} }_\ZZ \leq \frac{|\ell|_{Lip}}{\kappa} . 
\end{equation}
\end{enumerate}
Hence, \eqref{eq:tildezprime1} and \eqref{eq:tildezprime2} give \eqref{eq:bound_discrete-derivative} with $C_1 = \max\{\frac{M C (\kappa + |\ell|_{Lip})}{\kappa},\frac{|\ell|_{Lip}}{\kappa}\}$. For \eqref{eq:bound_difference-interpolants}, we first calculate
\[ \norm{\tilde{z}_\tau(t)-\zcu{}(t)}_\ZZ = \vabs{\frac{t-t_{\kindex(j-1)}}{t_{\kindex(j)}-t_{\kindex(j-1)}}}\norm{z_{\kindex(j+1)}-z_{\kindex(j)}}_\ZZ . \] 
Another application of \eqref{eq:estimate_numSmallSteps} and \cref{lem:bound_iter_Z} then yield for all $t \in [0,T]$
\[ \norm{\tilde{z}_\tau(t)-\zcu{}(t)}_\ZZ \leq \sum_{i=\kindex(j)}^{\kindex(j+1)-1} \norm{z_{i+1}-z_i}_\ZZ\leq M C \tau =: C_2 \tau. \]
Finally \eqref{eq:bound_backtrafo-time} is a direct consequence of the construction of $\tcl{}(t)$.
\end{proof}

\begin{remark}
Taking a closer look to the proof of \cref{lem:bound_z'_nosmall} we observe that it actually holds
\begin{equation} \frac{1}{t_{\kindex(j)}-t_{\kindex(j-1)}}\sum_{i=\kindex(j)}^{\kindex(j+1)-1} \norm{z_{i+1}-z_i}_\ZZ \leq C  \label{eq:iota_sum_bound} 
\end{equation}
for all $ j < \hat{N}(\tau)$.
\end{remark}
%\begin{proof} The first statement is a direct consequence of Lemma~\ref{lem:maxNum_noSteps}.
%In order to estimate the derivative of the affine interpolants, we observe, that due to \eqref{eq:estimate_numSmallSteps} and Lemma~\ref{lem:bound_iter_Z}, it holds
%\begin{align*} 
%\norm{(\tilde{z}_\tau)^\prime(t)}_\ZZ &= \vnorm{\frac{ z_{\iota(k)}-z_{\iota(k-1)}}{t_{\iota(k)}-t_{\iota(k-1)}} }_\ZZ \\
%&\leq \frac{ \norm{z_{\iota(k)}-z_{\iota(k-1)}}_\ZZ}{\varepsilon \tau} \\
%&\leq \sum_{j=\iota(k-1)}^{\iota(k)-1}\frac{ \norm{z_{j+1}-z_{j}}_\ZZ}{\varepsilon \tau}
%&\leq \frac{M C}{\varepsilon} = C_1/\varepsilon \qquad \faa t \in [0,T],
%\end{align*}
%which is \eqref{eq:bound_discrete-derivative}.\\
%For \eqref{eq:bound_difference-interpolants} we first calculate
%\[ \norm{\tilde{z}_\tau^\varepsilon(t)-\zcu{\varepsilon}(t)}_\ZZ = \vabs{1+\frac{t-t_{\iota(k-1)}}{t_{\iota(k)}-t_{\iota(k-1)}}}\norm{z_{\iota(k)}-z_{\iota(k-1)}}_\ZZ . \] 
%Another application of \eqref{eq:estimate_numSmallSteps} and Lemma~\ref{lem:bound_iter_Z} then yields for all $t \in [0,T]$
%\[ \norm{\tilde{z}_\tau^\varepsilon(t)-\zcu{\varepsilon}(t)}_\ZZ \leq 2 \sum_{\iota(k-1)}^{\iota(k)-1} \norm{z_{j+1}-z_j}_\ZZ\leq 2 M C \tau = C_2 \tau. \]
%Finally \eqref{eq:bound_backtrafo-time} is a consequence of the construction of $\iota$, since it holds for all $t \in [0,T]$
%\begin{align*} \abs{t-\tcl{\varepsilon}(t)} &\leq \max_{k} \, \abs{t_{\iota(k)}-t_{\iota(k-1)}} \\
%&\leq (\varepsilon + 1) \tau \leq 2 \tau.
%\end{align*}
%\end{proof}

With all this at hand, we are now in the position to show an a-priori estimate in the general case:
\begin{theorem}\label{thm:reverse_approx_general}
Let \cref{ass:globconv} be fulfilled. Then there exists $C>0$, independent of $\tau$, such that for the affine interpolants $\tilde{z}_\tau:[0,T]\to \ZZ$, defined as above, it holds:
\[ \norm{z(t)-\tilde{z}_\tau(t)}_\ZZ \leq C \sqrt{\tau} \quad \forall t \in [0,T], \]
where $z \in C^{0,1}([0,T];\ZZ)$ is the unique (differential) solution of the RIS.
\end{theorem}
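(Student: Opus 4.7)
The plan is to closely follow the eight-step proof of Theorem~\ref{thm:reverse_approx_small}, using the restricted affine interpolant $\tilde{z}_\tau$ introduced above; since $\tilde{z}_\tau$ already lives in physical time, the back-transformation step of the earlier proof disappears, and the iteration index $k$ is consistently replaced by $\kindex(j)$. The auxiliary bounds that the earlier proof relied on are now provided by Lemma~\ref{lem:bound_z'_nosmall}, Lemma~\ref{lem:bound_iter_Z} and \eqref{eq:estimate_numSmallSteps}: uniformly in $\tau$ one has $\norm{\tilde{z}_\tau^\prime(t)}_\ZZ \leq C_1$, $|t_{\kindex(j)} - t_{\kindex(j-1)}| \leq \tau$, and $\norm{z_{\kindex(j+1)} - z_{\kindex(j)}}_\ZZ \leq MC\tau$. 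Local stability \eqref{eq:auxA06} together with the identity $t_{\kindex(j+1)-1} = t_{\kindex(j)}$ from \eqref{eq:auxA09} ensures that both $\xi_{j-1} := D_z\II(t_{\kindex(j-1)}, z_{\kindex(j)})$ and $\xi_j := D_z\II(t_{\kindex(j)}, z_{\kindex(j+1)})$ satisfy $-\xi_{j-1}, -\xi_j \in \partial\RR(0)$, so the convex-combination argument of Step~3 produces the analogues of $E(t)$ and $R(t)$ defined in \eqref{eq:E_thm1}--\eqref{eq:R_thm1}, and the Step~4 estimate $\int_0^T E(t)\,\d t \leq C\tau^2$ carries over verbatim.

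The sole reason for the weaker rate lies in the estimates of $R$ and of the $e_2$ term. In the original proof the sharp bound $\int_0^T R \leq C\tau^2$ relied on the dissipation-saturation equality \eqref{eq:discreteSol_reformulation2}, which required $\lambda_{k+1} = 0$ at every iterate. In the present setting, however, the aggregated jump $z_{\kindex(j+1)} - z_{\kindex(j)}$ is a sum of several intermediate increments at which the multipliers $\lambda_i$ may be positive (though bounded by $|\ell|_{Lip}$ by Lemma~\ref{lem:maxNum_noSteps}), and that equality is lost. Rather than attempting a more delicate telescoping, I would bound $R$ crudely and pointwise: the Lipschitz continuity of $\ell$ and the local Lipschitz continuity of $D_z\FF$ combined with the $\tau$-bounds on $\norm{z_{\kindex(j+1)} - z_{\kindex(j)}}_\ZZ$ and $|t_{\kindex(j)} - t_{\kindex(j-1)}|$ give $\norm{D_z\II(t_{\kindex(j-1)}, z_{\kindex(j)}) - D_z\II(t_{\kindex(j)}, z_{\kindex(j+1)})}_{\ZZ^\ast} \leq C\tau$, hence $|R(t)| \leq C\tau$ via \eqref{eq:bound_discrete-derivative}, so that $\int_0^T R(t)\,\d t \leq C\tau$. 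Summing \eqref{eq:opt.prop03} over the intermediate iterates and exploiting the boundedness of the $\lambda_i$ in an analogous fashion yields $\RR(\d_\tau z_{j+1}) \leq \dual{-D_z\II(t_{\kindex(j)}, z_{\kindex(j+1)})}{\d_\tau z_{j+1}}_{\ZZ^\ast,\ZZ} + \Landau{\tau}$, which in turn introduces an additional additive $\Landau{\tau}$ term in the upper bound on $e_2$.

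Combining these estimates, $\dot\gamma(t) \leq C\gamma(t) + C\,E(t) + R(t) + \Landau{\tau}$, and Gronwall's lemma with $\gamma(0) = 0$ gives $\gamma(t) \leq C\tau$, so that $\norm{\tilde{z}_\tau(t) - z(t)}_\ZZ^2 \leq \gamma(t)/\kappa \leq C\tau$ by $\kappa$-uniform convexity, which is the claim. The main obstacle is precisely the loss of the dissipation-saturation identity for aggregated jumps; the unavoidable $\Landau{\tau}$ cost this introduces in both $R$ and $e_2$ is what drops the rate from the $\Landau{\tau}$ of Theorem~\ref{thm:reverse_approx_small} to the $\Landau{\sqrt{\tau}}$ obtained here, in line with Remark~\ref{rem:notoptimal}.
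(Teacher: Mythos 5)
Your proposal is correct and, at its core, coincides with the paper's argument: the decisive step in both is to recover the dissipation-saturation identity for the aggregated increment $z_{\kindex(j+1)}-z_{\kindex(j)}$ by summing \eqref{eq:opt.prop03} over the skipped intermediate iterates, which (via \eqref{eq:auxA09}, \cref{lem:bound_iter_Z} and \eqref{eq:estimate_numSmallSteps}) costs an additive defect of order $\Landau{\tau}$, and this defect fed through Gronwall is exactly what degrades the rate to $\Landau{\sqrt{\tau}}$. The difference is organizational: you route the argument through the $E$/$R$ decomposition of \cref{thm:reverse_approx_small}, whereas the paper short-circuits this by establishing the perturbed variational inequality \eqref{eq:auxA05} directly and testing it against \eqref{eq:thm.uniqueVISol}, then absorbing all remaining differences of $D_z\II$ into a single three-term splitting estimated by $C\tau$ in \eqref{eq:auxA10}; once an $\Landau{\tau}$ source is unavoidable anyway, the finer $E$/$R$ bookkeeping buys nothing, which is presumably why the paper drops it. Two small points: your claim that $\int_0^T E\,\d t\leq C\tau^2$ ``carries over verbatim'' tacitly uses $\ell^\prime\in BV([0,T];\VV)$, which is a hypothesis of \cref{thm:reverse_approx_small} but not of this theorem --- the standing Lipschitz assumption on $\ell$ only gives $\int_0^T E\,\d t\leq C\tau$, which however suffices here; and your saturation inequality is stated with $\xi_j=D_z\II(t_{\kindex(j)},z_{\kindex(j+1)})$ where the paper's \eqref{eq:auxA05} uses $D_z\II(\tcl{}(t),\zcu{}(t))$, a harmless discrepancy since the two differ by $\Landau{\tau}$ in $\ZZ^\ast$.
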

\begin{proof}
%From \citep[Thm 7.4]{mielketheil} there exists an unique (differential) solution $z \in C^{0,1}([0,T];\ZZ)$ of the RIS \eqref{eq:subDiffInc}. In particular
%\begin{equation}
%0 \in \partial\RR(z^\prime(t)) + D_z\II(t,z(t)) \qquad \faa t \in [0,T] 
%\label{eq:uniqueDiffSol2}
%\end{equation}
%or equivalently, for all $v \in \VV$:
%\begin{equation}
%\RR(z^\prime(t)) \geq \RR(v) + \dual{-D_z\II(t,z(t))}{v-z^\prime(t)} \qquad \faa t \in [0,T].
%\label{eq:uniqueVISol2}
%\end{equation}
%Now turn to the discrete approximation, in which we will suppress the dependence on $\varepsilon$ for the sake of better readibility. By construction of the back-transformation we directly obtain
%\begin{equation}
%-D_z\II(\tcl{}(t),\zcu{}(t)) \in \partial\RR(0) \quad \Longleftrightarrow \quad \RR(v) \geq \dual{-D_z\II(\tcl{}(t),\zcu{}(t)}{v} \quad \forall v \in \VV \label{eq:back-transformedInc1}
%\end{equation}
%and for $t \in [t_{\iota(k-1)},t_{\iota(k)} )$
First of all, from \cref{thm:app.existence_and_uniqueness} we have the existence of a unique differential solution $z \in C^{0,1}(0,T;\ZZ)$, that fulfills for all $v \in \ZZ$
\begin{equation}
\RR(z^\prime(t)) \geq \RR(v) + \dual{-D_z\II(t,z(t))}{v-z^\prime(t)}_{\ZZ^\ast,\ZZ} \qquad \faa t \in [0,T].
\label{eq:thm.uniqueVISol}
\end{equation}
On the other hand, according to \eqref{eq:auxA06}, we have for all $v\in \ZZ$ that
\begin{equation}
-D_z\II(\tcl{}(t),\zcu{}(t)) \in \partial\RR(0) \quad \Longleftrightarrow \quad \RR(v) \geq \dual{-D_z\II(\tcl{}(t),\zcu{}(t)}{v}_{\ZZ^\ast,\ZZ}. \label{eq:back-transformedInc1}
\end{equation}
Moreover, for $t \in [t_{\kindex(j-1)},t_{\kindex(j)} )$, the positive homogeneity and convexity of $\RR$ together with \eqref{eq:opt.prop03} give
\begin{align*}
 \RR(\tilde{z}^\prime_\tau(t)) &= \RR \left( \frac{ z_{\kindex(j)}-z_{\kindex(j-1)}}{t_{\kindex(j)}-t_{\kindex(j-1)}} \right) \leq \frac{1}{t_{\kindex(j)}-t_{\kindex(j-1)}} \sum_{i = \kindex(j-1)}^{\kindex(j)-1}\RR(z_{i+1}-z_{i}) \\ 
 &\leq \frac{1}{t_{\kindex(j)}-t_{\kindex(j-1)}} \sum_{i = \kindex(j-1)}^{\kindex(j)-1} \dual{-D_z\II(t_i,z_{i+1})}{z_{i+1}-z_{i}}_{\ZZ^\ast,\ZZ} \\
% &= \frac{1}{t_{\kindex(j)}-t_{\kindex(j-1)}} \sum_{i = \kindex(j-1)}^{\kindex(j)-1} 
% \begin{aligned}[t] \big[ \dual{D_z\II(\tcl{}(t),\zcu{}(t))-D_z\II(t_i,z_{i+1})}{z_{i+1}-z_{i}}_{\ZZ^\ast,\ZZ} \; &\\ 
%- \dual{D_z\II(\tcl{}(t),\zcu{}(t))}{z_{i+1}-z_i}_{\ZZ^\ast,\ZZ} \big] & \end{aligned} \\
&= \dual{-D_z\II(\tcl{}(t)),\zcu{}(t)}{\tilde{z}_\tau^\prime(t)}_{\ZZ^\ast,\ZZ} \\
&\quad + \frac{1}{t_{\kindex(j)}-t_{\kindex(j-1)}} 
\sum_{i = \kindex(j-1)}^{\kindex(j)-1} \dual{D_z\II(\tcl{}(t),\zcu{}(t))-D_z\II(t_i,z_{i+1})}{z_{i+1}-z_{i}}_{\ZZ^\ast,\ZZ} .
\end{align*}
For the last term, we further estimate
\begin{align*}
&\dual{D_z\II(\tcl{}(t),\zcu{}(t))-D_z\II(t_i,z_{i+1})}{z_{i+1}-z_{i}}_{\ZZ^\ast,\ZZ}  \\
&\quad \leq \dual{A(\zcu{}(t)-z_{i+1})}{z_{i+1}-z_i}_{\ZZ^\ast,\ZZ} + \dual{D_z\FF(\zcu{}(t))-D_z\FF(z_{i+1})}{z_{i+1}-z_i}_{\ZZ^\ast,\ZZ} \\
&\quad \leq \norm{A}_{\LL(\ZZ,\ZZ^\ast)} \norm{z_{\kindex(j)}-z_{i+1}}_\ZZ \norm{z_{i+1}-z_i}_\ZZ  + C_\FF \norm{z_{\kindex(j)}-z_{i+1}}_\ZZ \norm{z_{i+1}-z_i}_\ZZ \\
&\quad \leq C \, \tau \norm{z_{i+1}-z_i}_\ZZ ,
\end{align*}
where we used \cref{lem:ehrling}, \cref{lem:bound_iter_Z}, \eqref{eq:estimate_numSmallSteps}, and the fact that $t_i = t_{\kindex(j)} = \tcl{}(t)$ for all $ i \in \{\kindex(j-1),\dots,\kindex(j)-1\}$, see \eqref{eq:auxA09}. Exploiting \eqref{eq:iota_sum_bound}, and combining the resulting estimate with \eqref{eq:back-transformedInc1} gives for all $w \in \ZZ$:
\begin{equation}
\RR(w) - \RR(\tilde{z}^\prime_\tau(t)) + \dual{D_z\II(\tcl{}(t),\zcu{}(t))}{w - \tilde{z}^\prime_\tau(t)}_{\ZZ^\ast,\ZZ} \geq -C \, \tau \quad \faa t \in [0,T] \, . \label{eq:auxA05}
\end{equation}
Testing \eqref{eq:thm.uniqueVISol} with $v=\tilde{z}^\prime_\tau(t)$ and \eqref{eq:auxA05} with $w =z^\prime(t)$, respectively, and summing up the resulting inequalities yields
\begin{align*}
&C \, \tau \geq \dual{D_z\II(\tcl{}(t),\zcu{}(t)) - D_z\II(t,z(t))}{\tilde{z}^\prime_\tau(t)-z^\prime(t)}_{\ZZ^\ast,\ZZ} \\
&\; = \langle D_z\II(\tcl{}(t),\zcu{}(t)) - D_z\II(t,\zcu{}(t)) \\
&\qquad + D_z\II(t,\zcu{}(t)) - D_z\II(t,\tilde{z}_\tau(t)) 
 + D_z\II(t,\tilde{z}_\tau(t)) - D_z\II(t,z(t)),\tilde{z}^\prime_\tau(t)-z^\prime(t)\rangle_{\ZZ^\ast,\ZZ} .
\end{align*}
Since $z$ is Lipschitz continuous, we have $\norm{z^\prime(t)}_\ZZ \leq C$ a.e. in $[0,T]$. In combination with \eqref{eq:bound_discrete-derivative} as well as \cref{lem:ehrling} (note that $\tilde{z}_\tau$ and $ \zcu{}$ are bounded independent of $\tau$), we can thus estimate
\begin{align}
&\hspace*{-1cm}\dual{D_z\II(t,\tilde{z}_\tau(t)) - D_z\II(t,z(t))}{\tilde{z}^\prime_\tau(t)-z^\prime(t)}_{\ZZ^\ast,\ZZ} \notag \\
&\leq \vabs{\dual{D_z\II(\tcl{}(t),\zcu{}(t))- D_z\II(t,\zcu{}(t))}{\tilde{z}^\prime_\tau(t)) - z^\prime(t)}_{\ZZ^\ast,\ZZ}} \notag \\
&\quad + \vabs{\dual{D_z\II(t,\zcu{}(t)) - D_z\II(t,\tilde{z}_\tau(t))}{\tilde{z}^\prime_\tau(t)) - z^\prime(t)}_{\ZZ^\ast,\ZZ}} + C \, \tau \notag \\
&\leq \norm{\ell(\tcl{}(t))-\ell(t)}_\VV \norm{\tilde{z}_\tau^\prime(t) - z^\prime(t)}_\VV \notag \\
&\quad + \vabs{\dual{A\zcu{}(t) - A\tilde{z}_\tau(t)}{\tilde{z}^\prime_\tau(t) - z^\prime(t)}_{\ZZ^\ast,\ZZ}} \notag \\
&\quad + \vabs{\dual{D_z\FF(\zcu{}(t)) - D_z\FF(\tilde{z}_\tau(t))}{\tilde{z}^\prime_\tau(t)) - z^\prime(t)}_{\ZZ^\ast,\ZZ}} + C \, \tau \notag \\
&\begin{aligned}[t] \leq \Big( C \norm{\zcu{}(t) - \tilde{z}_\tau(t))}_\ZZ + C_{\FF} 
\norm{\zcu{}(t) - \tilde{z}_\tau(t)}_\ZZ &\notag \\[-1ex]
+ |\ell|_{Lip} \abs{\tcl{}(t)-t} & \Big) \, \norm{\tilde{z}_\tau^\prime(t) - z^\prime(t)}_\ZZ + C \, \tau \end{aligned} \notag \\
&\leq C \tau \, (\norm{\tilde{z}_\tau^\prime(t)}_\ZZ + \norm{z^\prime(t)}_{\ZZ}) + C \, \tau 
\leq C \, \tau, \label{eq:auxA10}
\end{align}
%
%&\quad \vabs{\dual{A(\zcu{}(t) - \tilde{z}_\tau(t)))}{\tilde{z}^\prime_\tau(t)) - z^\prime(t)}} \\
%&\qquad \qquad + \vabs{\dual{D_z\FF(\zcu{}(t))-D_z\FF(\hat{z}_\tau(t))}{\tilde{z}^\prime_\tau(t)) - z^\prime(t)}} \\
%&\qquad \qquad + \norm{\ell(\tcl{}(t))-\ell(t)}_\VV  (\norm{\tilde{z}_\tau^\prime(t) - z^\prime(t)}_\VV) + C \, \tau/\varepsilon \\
%&\qquad \leq \Big( \norm{A}_{\LL(\ZZ,\ZZ^\ast)}\norm{\zcu{}(t) - \tilde{z}_\tau(t))}_\ZZ + C_\FF \norm{\zcu{}(t) - \tilde{z}_\tau(t))}_\ZZ \\
%&\qquad \qquad + |\ell|_{Lip} \abs{\tcl{}(t)-t} \Big) \norm{\tilde{z}_\tau^\prime(t) - z^\prime(t)}_\ZZ + C \, \tau/\varepsilon \\
%&\qquad \leq \big( \norm{A}_{\LL(\ZZ,\ZZ^\ast)} C_2 \, \tau + C_\FF C_2 \, \tau + |\ell|_{Lip} \, 2 \, \tau \big) \, (C_1/\varepsilon + \norm{z^\prime(t)}_{\ZZ}) + C \, \tau/\varepsilon \\
%&\qquad \leq C \, \tau/\varepsilon
%\end{align}
where we used \eqref{eq:bound_difference-interpolants} and \eqref{eq:bound_backtrafo-time} in the next-to-last inequality. We can now in principle follow the lines of \citep[Thm 7.4]{mielketheil}. Since an additional error $ C \tau $ arise in \eqref{eq:auxA10}, we need to adapt some estimates of \cite{mielketheil} and therefore we give the main details:
%Firstly we note that, similar to \citep[Eq. (7.4), p. 178]{mielketheil}, we have 
%\begin{equation}
%\dual{D_z\II(t,\tilde{z}_\tau(t)) - D_z\II(t,z(t))}{\tilde{z}^\prime_\tau(t)-z^\prime(t)} \leq C \, \tau/\varepsilon
%\end{equation}
%which has an extra "error" $C \, \tau/\varepsilon$ we need to take into account. With this at hand define 
Again we define the error measure
$\gamma(t) := \dual{D_z\II(t,\tilde{z}_\tau(t)) - D_z\II(t,z(t))}{\tilde{z}_\tau(t)-z(t)}_{\ZZ^\ast,\ZZ}$.
Due to the $\kappa$-uniform convexity of $\II(t,\cdot)$, we have $\gamma(t) \geq \kappa \norm{\tilde{z}_\tau(t)-z(t)}_\ZZ^2$. In full analogy to \citep[Thm 7.4]{mielketheil}, we can estimate (see Appendix~\ref{sec:App.errorGamma})
\begin{equation*}
\dot{\gamma}(t) \leq C \norm{\tilde{z}_\tau(t)-z(t)}_\ZZ^2
+ 2 \dual{D_z\II(t,\tilde{z}_\tau(t)) - D_z\II(t,z(t))}{\tilde{z}^\prime_\tau(t)-z^\prime(t)}_{\ZZ^\ast,\ZZ} \, ,
\end{equation*}
wherein we use the essential boundedness of $\tilde{z}_\tau^\prime$ and $z^\prime$. Inserting \eqref{eq:auxA10} and exploiting that $\gamma(t)/\kappa \geq \norm{\tilde{z}_\tau(t)-z(t)}_\ZZ^2$ we obtain 
$\dot{\gamma}(t) \leq C \gamma(t) + C \tau$.
Now, we proceed as in the end of the proof of \cref{thm:reverse_approx_small}. Integrating and using Gronwall's Lemma yields
$\gamma(t) \leq (\gamma(0) + C T \tau)\exp^{C t} \leq C (\gamma(0)+\tau)$.
Due to $\hat{z}(0) = z(0) = z_0$, we have $\gamma(0) = 0$. Exploiting again the $\kappa$-uniform convexity of $\II$, we finally obtain $\norm{\tilde{z}_\tau(t)-z(t)}_\ZZ^2 \leq \gamma(t)/\kappa \leq C \, \tau$,
which was claimed.
\end{proof}

\begin{remark}\label{rem:notoptimal}
    In contrast to Theorem~\ref{thm:reverse_approx_small}, we do not obtain the optimal rate of convergence  
    in case the Lipschitz constant of $\ell$ is too big. 
    The critical part of the proof is the estimate of 
    $\sum_{i = \kindex(j-1)}^{\kindex(j)-1} \dual{D_z\II(\tcl{}(t),\zcu{}(t))-D_z\II(t_i,z_{i+1})}{z_{i+1}-z_{i}}$, 
    that only yields an order $\Landau{\tau}$ instead of $\Landau{\tau^2}$, which would be necessary to obtain the optimal order. 
    A potential resort could be to replace $\tilde z_\tau$ by a more sophisticated interpolant that does not simply neglect all iterations without progress 
    in the physical time.
    Note that, due to the $1-$homogeneity of the dissipation, it is always possible to achieve $|\ell|_{Lip} < \kappa$ by rescaling the time accordingly. 
    Then, Theorem~\ref{thm:reverse_approx_local} applies giving the optimal order in the rescaled time scale. Of course, depending on the 
    Lipschitz constant of $\ell$, the rescaled time scale might become rather small so that a large number of iterations is necessary, but 
    this rescaling argument indicates that it should be possible to achieve the optimal order in case of large $|\ell|_{Lip}$, too. 
    This however gives rise to future research.
\end{remark}

%\begin{remark}
%As seen in Lemma~\ref{lem:smallsteps}, the restriction on $\varepsilon$ guarantees that after an undersized timestep there is always a sufficient large one. Since the actual constants $|\ell|_{Lip}$ and $\kappa$ might a-priori be unknown, it is convenient to proceed as follows in a numerical algorithm:
%\begin{itemize}
%\item Start with some initial guess on $\varepsilon \in (0,1)$ 
%\item IF it exists  $k \leq N(\tau)$: 
%\[ \quad t_k - t_{k-1} < \varepsilon \tau \quad  \text{and} \quad t_{k+1}-t_{k} < \varepsilon \tau \] THEN reduce $\varepsilon$ by some predefined factor.
%\item ELSE we found $\varepsilon>0$ such that 
%\begin{equation}\label{eq:necessary_eps}
%\forall k\leq N(\tau): \quad  t_k - t_{k-1} < \varepsilon \tau \quad \Longrightarrow \quad t_{k+1}-t_{k} \geq \varepsilon \tau \end{equation} 
%\end{itemize} 
%Due to Lemma~\ref{lem:smallsteps} this procedure will at least stop if $\varepsilon \leq \frac{\kappa}{\kappa + |\ell|_{Lip}}$. 
%Now by reviewing the proof of Lemma~\ref{lem:smallsteps} and Theorem~\ref{thm:reverse_approx_small} we see that it is not necessary to require $\varepsilon \leq \frac{\kappa}{\kappa + |\ell|_{Lip}}$ if it holds \eqref{eq:necessary_eps} for some $0<\varepsilon<1$. However the bound on $\varepsilon$ guarantees that this condition will be fulfilled for $\varepsilon$ small enough.
%\end{remark}

%%%%%%%%%%%%%%%%%%%%%%%%%%%%%%%%%%%%%%%%%%%%%%%%
\subsection{A priori Analysis for Locally Uniformly Convex Energies}\label{sec:LocConv}

As already mentioned in the introduction, the local incremental minimization algorithm is actually not necessary, if the energy is globally uniformly convex. 
In this case, one could also use the global incremental minimization scheme, which is easier to implement, since the additional inequality constraint 
in \eqref{eq:locmin} is omitted. The situation changes however, if the energy is no longer globally uniformly convex, but only locally around a 
given evolution $z$. Then the local incremental minimization scheme still approximates the (local) solution with optimal order (provided that $|\ell|_{Lip}$
is not too large), while the global scheme might fail to converge, as we will demonstrate by means of a numerical example in Section~\ref{sec:testloc}.
Our precise notion of local uniform convexity is as follows:

\begin{assumption}[Local $\kappa$-uniform convexity]\label{ass:locconv}\\
We call $\II$ \emph{locally $\kappa$-uniform convex} around $ z :[0,T] \to \ZZ$ if there exist $\kappa,\Delta>0$, independent of $t$, such that $\II(t,\cdot)$ is $\kappa$-uniformly convex on $\overbar{B_\ZZ(z(t), \Delta)}$ for all $t \in [0,T]$, i.e.
\begin{equation}\label{eq:ineq_locconv} 
\dual{D_z^2\II(t,\tilde{z})v}{v}_{\ZZ^\ast,\ZZ} \geq \kappa \norm{v}_\ZZ^2 \qquad \forall \tilde{z} \in \overbar{B_\ZZ(z(t),\Delta)}, \, v \in \ZZ . 
\end{equation}
\end{assumption}
\noindent Note that local uniform convexity is always referred to an evolution $z$. The \cref{ass:locconv} especially implies that
\begin{equation}\label{eq:ineq_locconv2}
\dual{D_z\II(t,z_2)-D_z\II(t,z_1)}{z_2-z_1}_{\ZZ^\ast,\ZZ} \geq \kappa \norm{z_2-z_1}_\ZZ^2 \qquad \forall z_1,z_2 \in \overbar{B_\ZZ(z(t),\Delta)}
\end{equation}
Indeed, using \eqref{eq:ineq_locconv}, we obtain
\begin{align*}
&\dual{D_z\II(t,z_2)-D_z\II(t,z_1)}{z_2-z_1}_{\ZZ^\ast,\ZZ} \\
&\qquad = \int_0^1 \dual{D_z^2\II(t,z_1+s(z_2-z_1))[z_2-z_1]}{z_2-z_1}_{\ZZ^\ast,\ZZ} \d s
\geq \kappa \norm{z_2-z_1}_\ZZ^2
\end{align*}
where we used that $z_1+s(z_2-z_1) \in \overbar{B_\ZZ(z(t),\Delta)}$ for all $ s \in [0,1]$. Now, in order to prove a convergence-rate in the local uniform convex case, we again have to estimate the difference of iterates in the $\ZZ$-norm. Since it is not a-priori clear that the iterate remains in the neighbourhood of convexity of $\II$, we need to alter the proof of \cref{lem:bound_iter_Z}.

%%%%%% lemma
\begin{lemma}\label{lem:bound_discrete-loc-Z}
Let $0 \in \partial\RR(0) + D_z\II(t_{k-1},z_k)$ for some $k \in \N$. Then 
$\norm{z_{k+1} - z_{k}}_\ZZ \leq C_{loc} \, \tau$.
for some constant $C_{loc} = C_{loc}(\FF,\alpha,|\ell|_{Lip})>0$.
\end{lemma}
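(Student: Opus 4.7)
The plan is the following. Since $z_k$ is locally stable at time $t_{k-1}$, formula \eqref{eq:lambda_dist_expression} forces $\lambda_k = 0$. Combining the identity \eqref{eq:eq.aux001} (written for the next step) with the variational inequality \eqref{eq:opt.prop04} at step $k$, tested with $v = z_{k+1}-z_k$ and exploiting $\lambda_k=0$, and then subtracting, yields, exactly as in the first lines of the proof of \cref{lem:general_estimate},
\[
0 \;\geq\; \lambda_{k+1}\tau^2 \;+\; \dual{D_z\II(t_k,z_{k+1}) - D_z\II(t_{k-1},z_k)}{z_{k+1}-z_k}_{\ZZ^\ast,\ZZ}.
\]

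Next I would split the inner product by inserting $\pm D_z\II(t_k,z_k)$. Invoking the semilinear form $\II(t,z) = \tfrac12\dual{Az}{z}_{\ZZ^\ast,\ZZ} + \FF(z) - \dual{\ell(t)}{z}_{\VV^\ast,\VV}$, the $A$-contribution produces the positive term $\alpha\,\|z_{k+1}-z_k\|_\ZZ^2$ via coercivity; the difference in the times contributes $\dual{\ell(t_{k-1})-\ell(t_k)}{z_{k+1}-z_k}$; and a $D_z\FF$-difference remains. For the $\FF$-part, since the iterates lie in a fixed ball $B_\ZZ(0,r_0)$ by \cref{lem.basicest3}, \cref{lem:ehrling} combined with the $\VV$-constraint $\|z_{k+1}-z_k\|_\VV\leq \tau$ yields a bound of the form $C_\FF\,\tau\,\|z_{k+1}-z_k\|_\ZZ$. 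Similarly, the Lipschitz continuity of $\ell$ together with $|t_k-t_{k-1}|\leq \tau$ and the $\VV$-constraint produces $|\ell|_{Lip}\,\tau^2$.

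Dropping the nonnegative term $\lambda_{k+1}\tau^2$ then leaves the quadratic inequality
\[
\alpha\,\|z_{k+1}-z_k\|_\ZZ^2 \;\leq\; C_\FF\,\tau\,\|z_{k+1}-z_k\|_\ZZ \;+\; |\ell|_{Lip}\,\tau^2,
\]
which, setting $x := \|z_{k+1}-z_k\|_\ZZ$ and solving in $x$, is satisfied only for
\[
\|z_{k+1}-z_k\|_\ZZ \;\leq\; \frac{C_\FF + \sqrt{C_\FF^2 + 4\alpha|\ell|_{Lip}}}{2\alpha}\;\tau,
\]
which gives exactly the claimed dependence $C_{loc}(\FF,\alpha,|\ell|_{Lip})$.

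The main point of the argument, compared with its globally convex counterpart \cref{lem:bound_iter_Z}, is that one cannot invoke $\kappa$-uniform convexity of $\II$: the next iterate $z_{k+1}$ may a priori leave the neighborhood on which $\II(t_k,\cdot)$ is uniformly convex, so the clean linear estimate of \cref{lem:estimate_consec_step} is unavailable. The substitute is the always-available coercivity of the quadratic part $A$, which still supplies a positive quadratic term $\alpha\|z_{k+1}-z_k\|_\ZZ^2$; the price is that the $D_z\FF$-difference must be absorbed via a \emph{quadratic} (rather than linear) inequality in $\|z_{k+1}-z_k\|_\ZZ$, which accounts for the square-root dependence on $|\ell|_{Lip}$ in $C_{loc}$.
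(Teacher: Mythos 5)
Your proposal is correct and follows essentially the same route as the paper: start from the inequality underlying \eqref{eq:eq.general_estimate_2} with $\lambda_k=0$, insert the semilinear structure of $\II$, use the coercivity of $A$, Lemma~\ref{lem:ehrling} (resp.\ Remark~\ref{rem:ehrling2}) for the $D_z\FF$-difference, the Lipschitz continuity of $\ell$, and the constraint $\norm{z_{k+1}-z_k}_\VV\leq\tau$. The only cosmetic difference is that you solve the resulting quadratic inequality explicitly, whereas the paper absorbs the cross term via Young's inequality before invoking the $\VV$-constraint; both yield a constant with the claimed dependence.
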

\begin{proof} Let $k\in \N$ be given. From \eqref{eq:eq.general_estimate_2} we know
\begin{align*}
0 &\geq \dual{D_z\II(t_{k},z_{k+1})-D_z\II(t_{k},z_{k})}{z_{k+1}-z_{k}}_{\ZZ^\ast,\ZZ} \notag \\
&\quad + \dual{D_z\II(t_{k},z_{k})-D_z\II(t_{k-1},z_{k})}{z_{k+1}-z_{k}}_{\ZZ^\ast,\ZZ}  + (\lambda_{k+1}-\lambda_k) \tau^2 \end{align*}
Since $0 \in \partial\RR(0) + D_z\II(t_{k-1},z_k)$ holds by assumption, \eqref{eq:lambda_dist_expression} implies $\lambda_k = 0$. Inserting the definition of $\II$ and exploiting Remark~\ref{rem:ehrling2}, we can thus further estimate
\begin{align*}
0 &\geq \dual{A(z_{k+1}-z_k)}{z_{k+1}-z_k}_{\ZZ^\ast,\ZZ} + \dual{D_z\FF(z_{k+1})-D_z\FF(z_k)}{z_{k+1}-z_k}_{\ZZ^\ast,\ZZ} \\ 
&\quad + \dual{\ell(t_{k-1})-\ell(t_k)}{z_{k+1}-z_k} + \lambda_{k+1} \tau^2 \\
&\geq \alpha \norm{z_{k+1}-z_k}_\ZZ^2 - C_\FF \norm{z_{k+1}-z_k}_\ZZ \norm{z_{k+1}-z_k}_\VV 
 - |\ell|_{Lip} (t_k-t_{k-1}) \norm{z_{k+1}-z_k}_\VV . 
\end{align*}
Therefor, by applying the generalized Young-inequality, it follows from the constraint in \eqref{eq:locmin} that
\begin{align*}
0 &\geq \alpha \norm{z_{k+1}-z_k}_\ZZ^2 - \frac{\alpha}{2} \norm{z_{k+1}-z_k}_\ZZ^2 -C_{\FF,\alpha} \norm{z_{k+1}-z_k}_\VV^2 - |\ell|_{Lip}\, \tau^2 \\
&\geq \frac{\alpha}{2} \norm{z_{k+1}-z_k}_\ZZ^2 - C_{\FF,\alpha} \tau^2 - |\ell|_{Lip}\, \tau^2 
\end{align*}
so that indeed $C_{loc} \, \tau^2 \geq  \norm{z_{k+1}-z_k}_\ZZ^2$
with $C_{loc} = \frac{2}{\alpha}(C_{\FF,\alpha} + |\ell|_{Lip} )$.
\end{proof}
%%%%% corollary 
%\begin{lemma}\label{lem:timeprogress}
%Let the general assumptions be fulfilled. Let further $\II(t_{k-1},\cdot) \in C^3(\ZZ;\R)$ be $\kappa$-uniform convex on $B^\ZZ_\Delta(z)$ for some $k\in \N$. Let further $|\ell|_{Lip} \leq \kappa - \delta$ for some $\delta \in (0,\kappa]$. Then, if $B^\ZZ_\tau(z_{k-1}) \subset B^\ZZ_\Delta(z)$, it holds
%\begin{equation}
%\norm{z_k - z_{k-1}}_\VV \leq \frac{\kappa-\delta}{\kappa}\, \tau < \tau \, .
%\label{eq:timeprogress}
%\end{equation} 
%and $ 1- \frac{\kappa-\delta}{\kappa} < \hat{t}^\prime(s) \leq 1 $ for all $s \in [s_{k-1},s_k)$.
%\end{lemma}
%\begin{proof} by induction. 
%Let $-D_z\II(0,z_0) \in \partial \RR(0)$ hold. By convexity of $\II(t,\cdot)$ we have
%\[ \II(0,z_0) \leq \II(0,z) + \RR(z-z_0) \quad \forall z \in \ZZ \]
%so that $z_1 = z_0$ is the unique minimizer for \eqref{eq:locminscheme} which is \eqref{eq:timeprogress} for $k=1$.
%Let \eqref{eq:timeprogress} hold for some $k \in \N$, that is    
%$ \norm{z_k - z_{k-1}}_\VV < \tau$. Since $z_k \in \interior{B(z_{k-1},\tau)}$ we have
%\begin{align}
%0 \in \partial \RR(z_k - z_{k-1}) + D_z\II(t_{k-1},z_k) \label{eq:auxA01}
%\end{align}
%From Lemma~\ref{lem:estimate_consec_step} we thus obtain
%\[  \norm{z_{k+1}-z_k}_\VV \leq \frac{|\ell|_{Lip}}{\kappa} \, \tau \leq \frac{\kappa-\delta}{\kappa}\, \tau \]
%by the smallness assumption on $|\ell|_{Lip}$. The lower bound on $\hat{t}^\prime(s)$ is a direct consequence of \eqref{eq:timeprogress} and the time-update formula. 
%\end{proof}
With this at hand, we can now show an a-priori estimate in the case of an energy-functional, which is only locally uniform convex around a differential solution.

\begin{theorem}\label{thm:reverse_approx_local}
Let $z \in C^{0,1}([0,T];\ZZ)$ be a (differential) solution. 
Furthermore let $\II$ be locally $\kappa$-uniform convex around $z$ with radius $\Delta>0$ and assume that $\ell \in W^{1,\infty}([0,T];\VV)$ with $\abs{\ell}_{Lip} \leq \kappa - \delta$ (see \cref{ass:Lipschitz_small}) and $\ell^\prime \in BV([0,T];\VV)$. Then there exists a constant $K_{loc}>0$, independent of $\tau$, such that, for the back-transformed parameterized solution $z_\tau:[0,T]\to \ZZ$ and all $\tau \leq \bar{\tau}$ with $\bar{\tau}$ sufficiently small, it holds:
\begin{equation} 
\norm{z_\tau(t)-z(t)}_\ZZ \leq K_{loc} \, \tau \quad \forall t \in [0,T]. \label{eq:eq.reverse_approx_local}
\end{equation}
\end{theorem}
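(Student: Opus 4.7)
My plan is to reduce the proof to Theorem~\ref{thm:reverse_approx_small} via a bootstrap argument that is essentially dictated by Remark~\ref{rem:uni-conv_estimates}. That remark identifies precisely the three places where global $\kappa$-uniform convexity was used in the earlier proof: the progress estimate \eqref{eq:timeprogress}, the lower bound \eqref{eq:gammaest} on the error measure $\gamma$, and the inequality \eqref{eq:auxA02} inside the estimate of $R(t)$. Each of these remains valid provided that the relevant iterates (resp.\ interpolants) lie in a ball on which $\II(t,\cdot)$ is uniformly convex. Hence the whole analysis of Theorem~\ref{thm:reverse_approx_small} will carry over once I can certify that, for $\tau$ small enough, the iterates $z_k$ and the interpolant $\tilde z_\tau(t)$ stay in $\overbar{B_\ZZ(z(\cdot),\Delta)}$ throughout.

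To make this precise, I would fix a slack $\Delta' < \Delta$, say $\Delta'=\Delta/2$, and introduce the maximal safe index
\[
k^\ast(\tau) := \max\{\, k \leq \hat{N}(\tau) \, : \, z_j \in \overbar{B_\ZZ(z(t_j),\Delta')} \text{ for all } j \leq k \,\}.
\]
Since $z_0=z(0)$, one has $k^\ast(\tau)\geq 0$. On the range $\{0,\dots,k^\ast(\tau)\}$, Lemma~\ref{lem:bound_discrete-loc-Z} combined with the $\ZZ$-Lipschitz continuity of $z$ ensures $\|z_{k+1}-z_k\|_\ZZ = \Landau{\tau}$ for locally stable predecessors, so that consecutive iterates also lie in $\overbar{B_\ZZ(z(t_k),\Delta)}$ for small $\tau$. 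Thus Lemma~\ref{lem:general_estimate}, Lemma~\ref{lem:estimate_consec_step} and the inductive argument of Lemma~\ref{lem:timeprogress} reproduce up to $k^\ast(\tau)$ with the local constant $\kappa$. Consequently the retransformation $s_\tau \mapsto t$ is well-defined on $[0, t_{k^\ast(\tau)}]$, and Steps~0--7 of Theorem~\ref{thm:reverse_approx_small} can be executed verbatim on this subinterval, yielding a constant $K_{loc}>0$ independent of $\tau$ such that $\|z_\tau(t) - z(t)\|_\ZZ \leq K_{loc}\tau$ for all $t \in [0, t_{k^\ast(\tau)}]$.

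To close the bootstrap, suppose towards contradiction that $k^\ast(\tau) < \hat{N}(\tau)$. Then by maximality $\|z_{k^\ast+1} - z(t_{k^\ast+1})\|_\ZZ > \Delta'$. On the other hand, the triangle inequality combined with Lemma~\ref{lem:bound_discrete-loc-Z}, the Lipschitz continuity of $z$ and the bound just established at $t = t_{k^\ast}$ gives
\[
\|z_{k^\ast+1} - z(t_{k^\ast+1})\|_\ZZ \leq (C_{loc} + K_{loc} + L_z)\,\tau,
\]
where $L_z$ denotes the $\ZZ$-Lipschitz constant of $z$. Choosing $\bar\tau := \Delta' /(C_{loc} + K_{loc} + L_z)$ contradicts the previous inequality for all $\tau < \bar\tau$, so $k^\ast(\tau) = \hat{N}(\tau)$ and \eqref{eq:eq.reverse_approx_local} holds on all of $[0,T]$.

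The main obstacle is the apparent circularity: local convexity produces the error bound only if the iterates remain in the convexity region, while the fact that they do remain there is exactly what the error bound is supposed to certify. The bootstrap dissolves this, but two points must be checked carefully: first, the constants appearing in the reproduction of Theorem~\ref{thm:reverse_approx_small} must depend only on the local convexity data ($\kappa,\Delta$), on $\alpha,\FF,\ell,z_0,T$, and on $\|A\|$, and not on $\tau$ or on $k^\ast(\tau)$; second, the individual step bound $\|z_{k+1}-z_k\|_\ZZ = \Landau{\tau}$ must be available without invoking global uniform convexity, which is precisely the content of Lemma~\ref{lem:bound_discrete-loc-Z} and explains why that lemma was singled out beforehand.
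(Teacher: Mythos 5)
Your proposal is correct and follows essentially the same route as the paper: a bootstrap (there phrased as a direct induction over $k$, here as a maximal safe index plus contradiction) that keeps the iterates and the interpolant inside $\overbar{B_\ZZ(z(t),\Delta)}$ using the convexity-free single-step bound of Lemma~\ref{lem:bound_discrete-loc-Z}, the localization of the three convexity estimates from Remark~\ref{rem:uni-conv_estimates}, and a threshold $\bar\tau$ determined by $\Delta$, $C_{loc}$, the Lipschitz constant of $z$, and the a priori constant $K'$ from Theorem~\ref{thm:reverse_approx_small}. The two caveats you flag (that $K_{loc}$ must be fixed before $\bar\tau$ and be independent of the length of the safe subinterval, and that the step bound must not presuppose convexity) are exactly the points the paper's Step~0 and Lemma~\ref{lem:bound_discrete-loc-Z} address.
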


\begin{proof} 
The proof basically follows the Steps in the proof of \cref{thm:reverse_approx_small}. 
Though we need to ensure that the iterates remain in the region of uniform convexity of $\II$, see Remark~\ref{rem:uni-conv_estimates}. 
Therefor, we will show by means of induction, that $z_k,z_{k+1} \in B_\ZZ(z(t),\Delta)$ for $t \in [t_{k-1},t_k]$. 
As an easy consequence, the affine interpolant $\tilde{z}_\tau$, defined in \eqref{eq:affine_thm_loc} below, 
fulfills $\tilde{z}_\tau(t) \in B_\ZZ(z(t),\Delta)$ for $t \in [t_{k-1},t_k]$, 
which yields that the estimates in Remark~\ref{rem:uni-conv_estimates} also hold in the local convex case and we can proceed as in the proof of \cref{thm:reverse_approx_small}.\\[0.5ex]
\textsl{Step 0: Preparation}\\ We start by choosing 
\begin{equation}
\tau \leq \min\left(\frac{\Delta}{3 \, C_{loc}} , \frac{\Delta}{3 \, K^\prime} , \frac{\Delta}{3 \, |z|_{Lip}}, \frac{\Delta}{3}\right) =: \bar{\tau}, \label{eq:tau_choice} 
\end{equation} where $C_{loc}$ denotes the constant from \cref{lem:bound_discrete-loc-Z} and $K^\prime$ the constant from \cref{thm:reverse_approx_small}. To be precise here, assume that $\II$ is globally $\kappa$-uniform convex. Then, by \cref{thm:reverse_approx_small}, there would exist a constant $K^\prime$ such that the a-priori estimate \eqref{eq:reverse_approx_small} would hold on $[0,T]$. This is the constant we refer to here. To proof \eqref{eq:eq.reverse_approx_local}, we will now successively show that the affine-interpolant defined by 
\begin{equation}\label{eq:affine_thm_loc}
\tilde{z}_\tau(t) := z_{k} + \frac{t-t_{k-1}}{t_k-t_{k-1}} ( z_{k+1} - z_{k} ) \quad t \in [t_{k-1},t_k), 
\end{equation}
fulfills \eqref{eq:eq.reverse_approx_local} on every interval $[t_{k-1},t_{k}]$. Since we might have $[t_{k-1},t_k) = \emptyset$, 
this definition is at first only formally. However, we will successively show by means of induction w.r.t $k$, that $t_k - t_{k-1} \geq \varepsilon \tau$ for some $\varepsilon \in [0,1)$ independent of $\tau$.\\[0.5ex]
\textsl{Step 1: Initialization}\\ 
We show \eqref{eq:eq.reverse_approx_local} for $t \in [t_0,t_1]$. To do so, we observe that, due to the choice of $\tau$, we have $B_\ZZ(z_0,\tau) \subset B_\ZZ(z_0,\Delta)$. Hence, $\II(0,\cdot)$ is convex on $B_\ZZ(z_0,\tau)$ and consequently, we can argue exactly as in Remark~\ref{rem:z1=z0} 
to obtain $z_1 = z_0 \in B_\ZZ(z(0),\Delta)$ and $t_1 - t_0 = \tau$ so that $\tilde{z}_\tau$ is well defined and equals $z_0$ on $[t_0,t_1]$. 
Since $z_0, z_1 \in B_\ZZ(z(0),\Delta)$ and $\II(t_0, \cdot)$ is uniformly convex there by assumption, 
the estimates \eqref{eq:timeprogress} and \eqref{eq:auxA02} hold for $k=1$ (see Remark~\ref{rem:uni-conv_estimates}).
Moreover, we obviously have $\tilde{z}_\tau(t) \equiv z_0 \in B_\ZZ(z(t),\Delta)$ for all $t \in [t_0,t_1]$, 
due to the Lipschitz-continuity of $z$ and the choice of $\tau$. 
Therefore, we can exploit the convexity of $\II(t,\cdot)$ on $B_\ZZ(z(t),\Delta)$, giving that $\eqref{eq:gammaest}$ holds 
for $t\in [t_0, t_1]$, too. Then, as illustrated in Remark~\ref{rem:uni-conv_estimates}, 
we can argue analogous to the proof of \cref{thm:reverse_approx_small} (steps 2--6) to obtain
$\|\tilde z_\tau(t) - z(t)\|_\ZZ \leq K' \,\tau$ for all $t\in [t_0, t_1]$. \\[0.5ex]
\textsl{Step 2: Induction}\\ Let $k \in \N$ be given with 
\begin{gather} 
z_k \in B_\ZZ(z(t_{k-1}),\Delta), \quad \norm{z_k-z_{k-1}}_\VV < \tau, \label{eq:induction1}\\
\norm{\tilde{z}_\tau(t)-z(t)}_\ZZ \leq K^\prime \, \tau \quad\forall\,t \in [t_0,t_k]. \label{eq:eq.aux009}
\end{gather} 
In the first step of the proof, we have seen that these conditions are fulfilled for $k=1$ and 
will now show that we can then extend these estimates to $[t_0,t_{k+1}]$. 
For this, we observe that, since $\tau \leq \frac{\Delta}{3 \, K^\prime}$, 
the inequality \eqref{eq:eq.aux009} gives $z_k = \tilde{z}_\tau(t_k) \in B_{\Delta/3}(z(t_k))$. 
Thus, by exploiting \cref{lem:bound_discrete-loc-Z} and \eqref{eq:tau_choice}, it follows that 
$\norm{z_{k+1}-z(t_k)}_\ZZ \leq \Delta$ so that again 
the estimates \eqref{eq:timeprogress} and \eqref{eq:auxA02} hold true (see Remark~\ref{rem:uni-conv_estimates}). 

It remains to show that $\tilde{z}_\tau(t) \in B_\ZZ(z(t),\Delta)$ for all $t \in [t_k,t_{k+1}]$ so that we have 
\eqref{eq:gammaest} on the next time interval, see again Remark~\ref{rem:uni-conv_estimates}.
By \eqref{eq:induction1}, it holds $\lambda_k=0$ such 
that the inequality \eqref{eq:eq.general_estimate_2}, in combination with $\lambda_{k+1} \geq 0$, reduces to 
\begin{multline*}
0 \geq \dual{D_z\II(t_{k},z_{k+1})-D_z\II(t_{k},z_{k})}{z_{k+1}-z_{k}}_{\ZZ^\ast,\ZZ} \\
+ \dual{D_z\II(t_{k},z_{k})-D_z\II(t_{k-1},z_{k})}{z_{k+1}-z_{k}}_{\ZZ^\ast,\ZZ}.
\end{multline*}
The $\kappa$-uniform convexity of $\II(t_k,\cdot)$ on $B_\ZZ(z(t_k),\Delta)$ thus gives 
$0 \geq \kappa \norm{z_{k+1}-z_{k}}_\ZZ^2 - |\ell|_{Lip} (t_{k}-t_{k-1}) \norm{z_{k+1}-z_{k}}_\VV$, which implies 
\[ \norm{z_{k+1}-z_k}_\ZZ \leq |\ell|_{Lip}/\kappa \, \tau \leq \frac{\kappa-\delta}{\kappa} \tau < \tau \] by the assumption on $|\ell|_{Lip}$. By the time-update \eqref{eq:tupdate}, we consequently have 
\begin{equation}\label{eq:auxA13}
t_k-t_{k-1} \geq \delta/\kappa \, \tau,
\end{equation}
which gives the well-posedness of our interpolant and the boundedness of its derivative in $\ZZ$ due to \cref{lem:bound_discrete-loc-Z}. From this Lemma and again the choice of $\tau$, we moreover conclude for $t \in [t_k,t_{k+1}]$
\begin{align*}
\norm{\tilde{z}_\tau(t)-z(t)}_\ZZ &\leq \norm{z_k-z(t_k)}_\ZZ + \norm{z(t_k)-z(t)}_\ZZ + \frac{t-t_k}{t_{k+1}-t_k}\norm{z_{k+1}-z_k}_\ZZ \\
&\leq K^\prime \, \tau + \norm{z}_{Lip} (t_{k+1}-t_k) + C_{loc} \tau 
\leq \Delta/3 + \Delta/3 + \Delta/3 = \Delta.
\end{align*}
Hence $\tilde{z}_\tau(t) \in B_\ZZ(z(t),\Delta)$ for all $t \in [t_0,t_{k+1}]$ so that the uniform convexity of $\II(t,\cdot)$ 
on $B_\ZZ(z(t), \Delta)$ implies that \eqref{eq:gammaest} holds on $[t_0,t_{k+1}]$.
Thus we can again argue as in the proof of \cref{thm:reverse_approx_small} (steps 2--6) to show 
\eqref{eq:eq.aux009} on the extended time interval $[t_0,t_{k+1}]$.
In summary, we therefore have shown that \eqref{eq:induction1}--\eqref{eq:eq.aux009} holds with $k+1$ instead of $k$, 
which completes the induction step. Hence, iterating this yields $\|\tilde z_\tau(t) - z(t)\|_\ZZ \leq K' \,\tau$ on the whole 
time interval $[0,T]$.\\[0.5ex]
\textsl{Step 3: Comparing Interpolants}\\
We again define the affine interpolant $\hat{t}_\tau$ as in \eqref{eq:affine-interpolants}. From \eqref{eq:auxA13}, it follows that $\hat{t}^\prime_\tau \geq \delta/\kappa$ for all $ s \in [0,S_\tau]$. Thus, there exists a unique inverse function $s_\tau : [0,T] \to [0,\hat{S}_\tau]$ with $ 1 \leq s_\tau^\prime(t) \leq \frac{1}{1- \frac{\kappa-\delta}{\kappa}}$ a.e. in $[0,T]$. 
In full analogy to the proof of \cref{thm:reverse_approx_small} (step 7), we obtain 
$\norm{z_\tau(t)-\tilde{z}_\tau(t)}_\ZZ \leq \tau$, 
where again $z_\tau$ is the retransformed affine interpolation, i.e. $z_\tau(t) := \hat{z}_\tau(s_\tau(t))$. Thus we finally get 
\[ \norm{z_\tau(t)-z(t)}_\ZZ \leq \norm{z_\tau(t)-\tilde{z}_\tau(t)}_\ZZ + \norm{\tilde{z}_\tau(t)-z(t)}_\ZZ \leq K_{loc} \, \tau , \]
which was claimed.
\end{proof}

%%%%%%%%%%%%%%%%%%%%%%%%%%%%%%%%%%%%%%%%%%%%%%%%%%%%%%%%%%%%%%%%%%
\section{Numerical tests}\label{sec:tests}
In the next subsections, we provide two numerical examples in order to illustrate the theoretical findings of the previous section. 

\subsection{Globally uniformly convex energy}
We start with an infinite-dimensional example. For that, we let $\Omega = [0,1]^2$ and choose 
\[ \II(t,z) = \frac{1}{2} \dual{A z}{z}_{\ZZ^\ast,\ZZ} - \dual{\ell(t)}{z}_{\VV} \]
with $A = -\Delta : H^1_0(\Omega) \mapsto	H^{-1}(\Omega)$ and 
$\ell(t,x) = \mathds{1}_\Omega - \frac{1}{\pi} \cos(\pi \, t/2 ) f(x)$, 
wherein $f(x) = 2(x_1(1-x_1)+x_2(1-x_2))$. Moreover, the dissipation functional is given by the $L^1$-norm, i.e., $\RR(v) = \norm{v}_{L^1(\Omega)}$. Consequently, the underlying spaces are $\ZZ = H^1_0(\Omega)$, $\VV=L^2(\Omega)$, and $\XX = L^1(\Omega)$. In this setting, the unique (differential) solution to \eqref{eq:subDiffInc} reads
\begin{equation}\label{eq:eq.exactPDE_sol}
z(t,x) = \left\{
\begin{array}{rc}
0 & ,t  \in [0,1) \\
-\frac{1}{\pi} \cos(\frac{\pi}{2} t) \, v(x) & ,t \in [1,2) \\
-\frac{1}{\pi} \,  v(x) & ,t \in [2,3]
\end{array} \right.
\end{equation}
with $v(x) = x_1 x_2 (1-x_1)(1-x_2)$. For the spatial discretization of this system, we choose linear finite elements on a Friedrich-Keller triangulation 
with mesh size $h = \sqrt{2}/100$ and use a mass-lumping scheme for the discretization of $\RR$. 
The detailed implementation is described in \citep{fem_paramsol}. 
The resulting errors are shown in Figure~\ref{fig:pdeExample_rates}. 
It can be seen that the error decreases in a linear fashion (w.r.t. the time-parameter $\tau$) until the error of the spatial-discretization is dominating.
\begin{figure}[h!]
    \centering
      \includegraphics[scale=0.5]{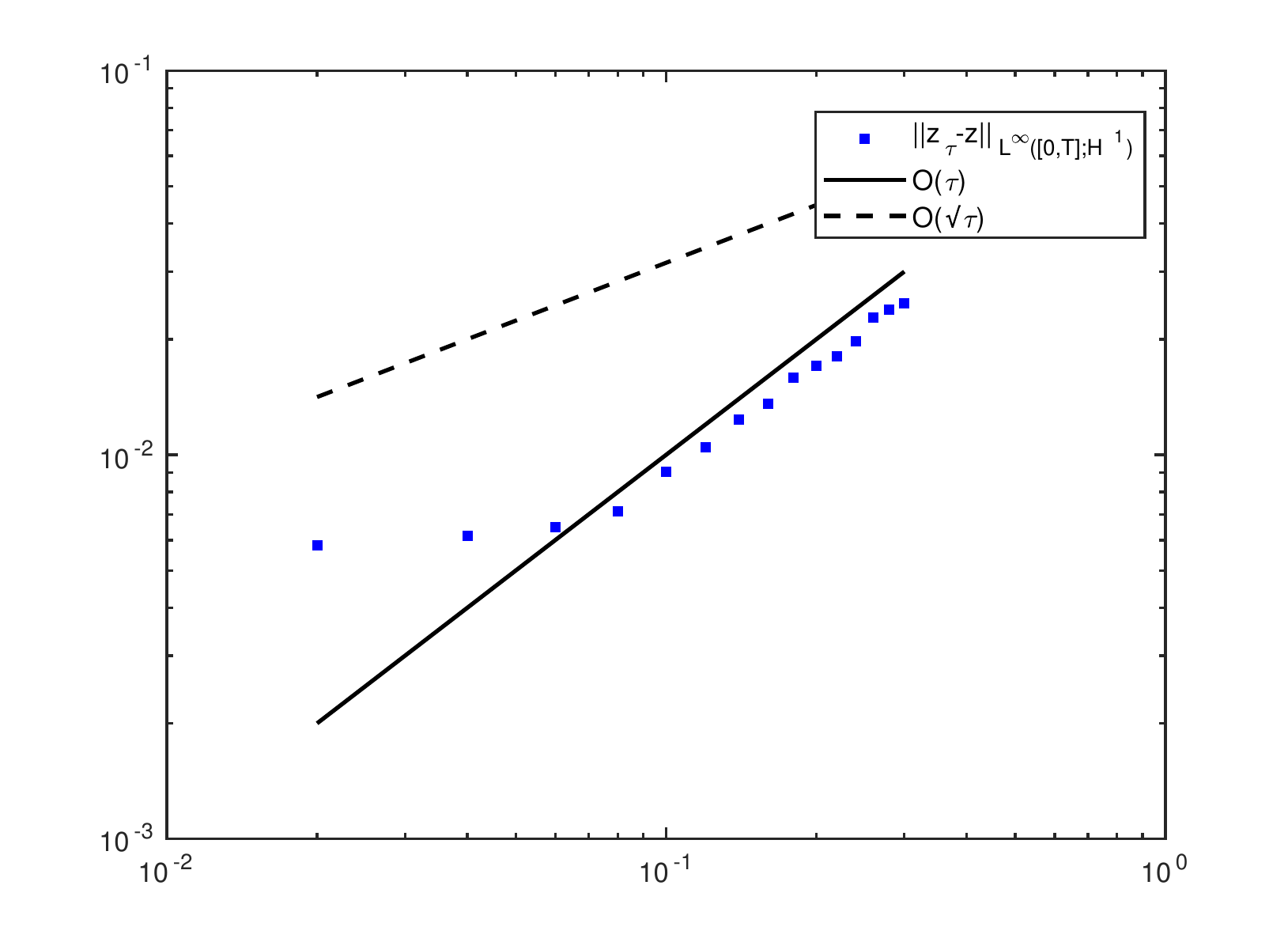} 
      \caption{Errors for the approximation of the parameterized solution \eqref{eq:eq.exactPDE_sol} using the local minimization scheme.} 
      \label{fig:pdeExample_rates} 
   \centering 
\end{figure}

\subsection{Locally uniformly convex energy}\label{sec:testloc}
We next give a one-dimensional example, in which the energy is not globally uniformly convex. 
In particular, the energetic solution will no longer be continuous in time, which is seen in Figure~\ref{fig:localConvex}. 
However, the parametrized solution is still Lipschitz-continuous and moreover remains in a region, where the energy is uniformly convex, 
see Figure~\ref{fig:localConvex}. For this example, we set $\ZZ=\VV=\XX=\R$ as well as:
\begin{align}
\RR(v) = \abs{v} \quad \text{and} \quad \II(t,z) = \frac{1}{2} z^2 + \FF(z) - \ell(t)z \label{eq:Energ_locConv_1D}
\end{align}
with
\begin{align*}
&\FF(z) = \left\{ \begin{array}{rc}
2z^3 - 5/2\, z^2 +1 & , z \geq 0 \\
-2z^3 - 5/2 \, z^2 +1 & , z<0
\end{array} \right. \quad \text{and} \quad  \ell(t) = -1/2(t-3/2)^2+3/2 .
\end{align*}
For $z_0 = -2/3$, a (differential) solution to \eqref{eq:subDiffInc} with \eqref{eq:Energ_locConv_1D} reads
\begin{equation} \label{eq:z_loc_analytic}
z(t) = \left\{
\begin{array}{rl}
-2/3 & ,t  \in [0,1/2) \\
-\frac{1}{3}(1+1/2 \sqrt{1+3(t-3/2)^2})  & ,t \in [1/2,2) \\
-1/2  & ,t \in [2,3]
\end{array} \right.
\end{equation}
By direct calculations, one verifies that $z$ indeed stays in a region, where $\II$ is uniformly convex. 
Thus, from the analysis in Section~\ref{sec:AprioriEstimates}, we expect the error in the approximation to be of order $\Landau{\tau}$, 
which can be nicely observed in the Figure~\ref{fig:localConvex} below. 
\begin{figure}[h!]
   \begin{subfigure}[t]{0.49\textwidth} 
      \includegraphics[width=\textwidth]{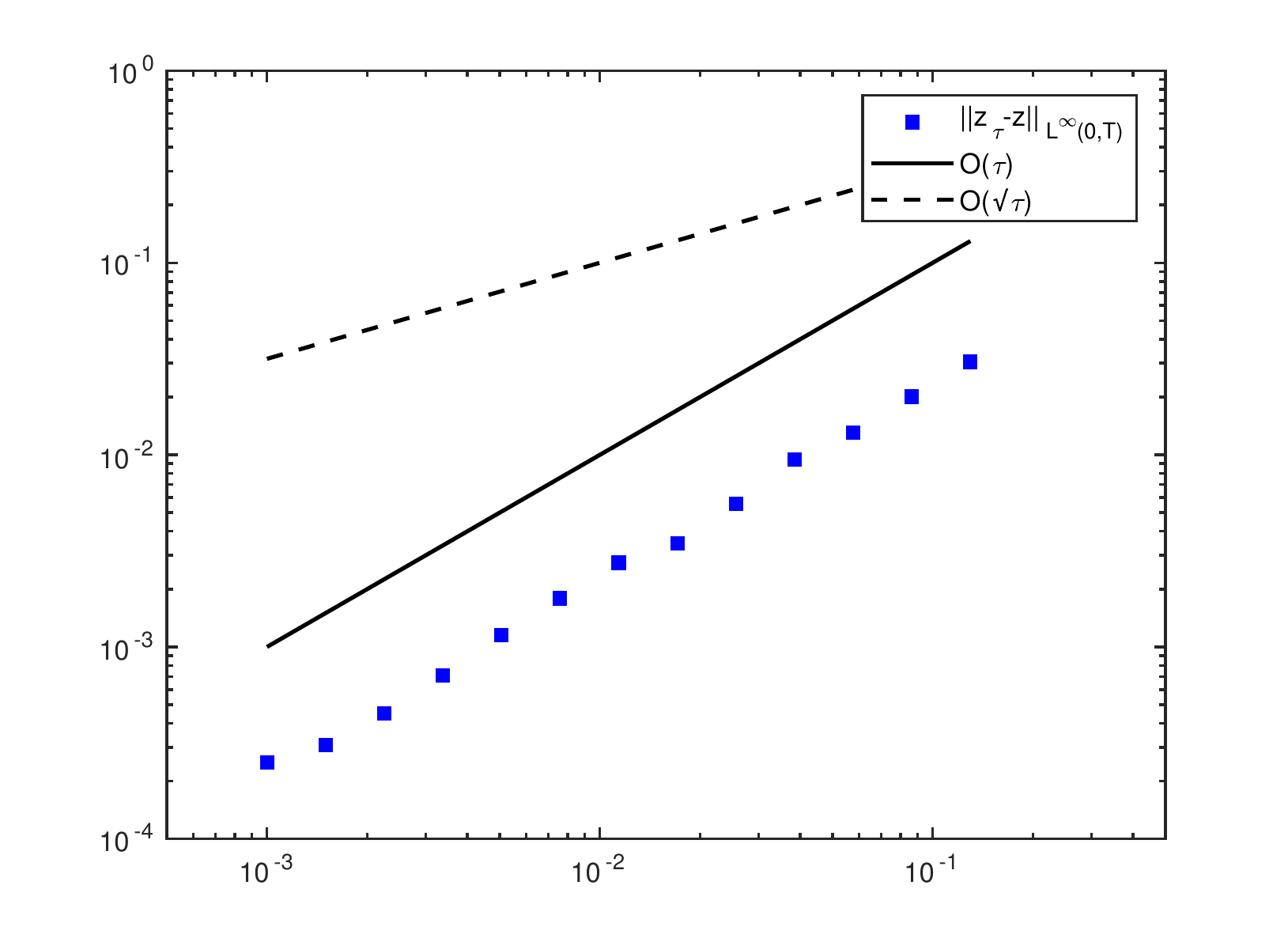} 
   \end{subfigure}\hfill% 
   \begin{subfigure}[t]{0.49\textwidth} 
      \includegraphics[width=\textwidth]{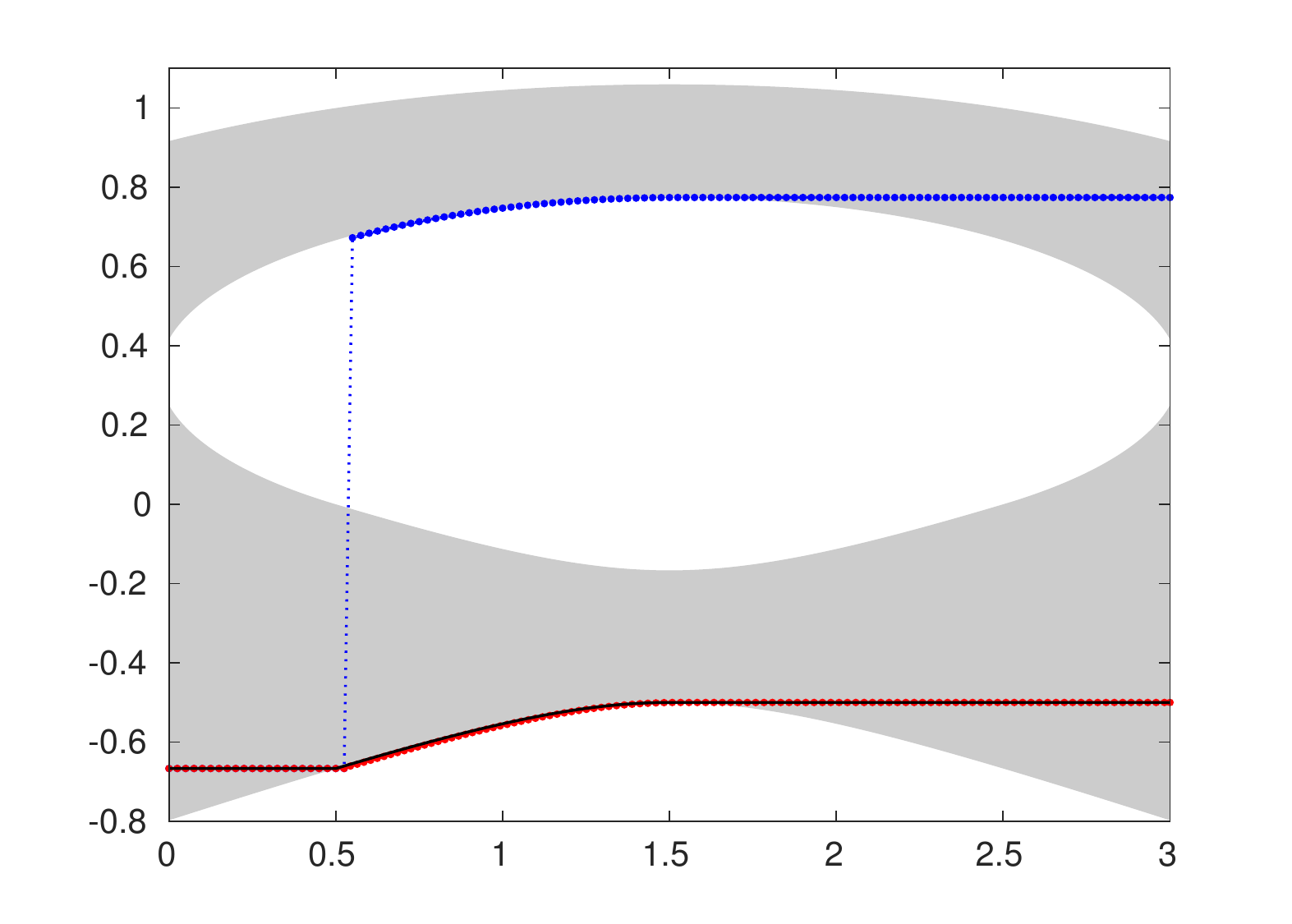}  
   \end{subfigure}\\[5pt]% 
   \centering 
   \caption{Left: Errors for the approximation of a parametrized solution using the local minimization scheme depending on the stepsize $\tau$; Right: Corresponding differential solution (black) as well as the numerical approximations using the global (blue) and the local iterated minimization scheme (red) as functions of the time $t$.}
   \label{fig:localConvex}
\end{figure}

\begin{appendix}

\section{Estimation of the error measure $\gamma$}\label{sec:App.errorGamma}
In the proofs of \cref{thm:reverse_approx_small} and \cref{thm:reverse_approx_general}, we use an adapted version of an estimate that is part of the proof of uniqueness for solutions of RIS from \cite{mielketheil}. For convenience of the reader, we present this adapted version here. Therefor let $z_1,z_2 \in W^{1,\infty}([0,T];\ZZ)$ and again $\gamma(t) := \dual{D_z\II(t,z_1(t)) - D_z\II(t,z_2(t))}{z_1(t)-z_2(t)}_{\ZZ^\ast,\ZZ}$. First of all we calculate
\begin{align*}
\dot{\gamma}(t) 
% &= \dual{D^2_z\II(t,z_1(t))[z_1^\prime(t)] - D^2_z\II(t,z_2(t))[z_2^\prime(t)]}{z_1(t)-z_2(t)}_{\ZZ^\ast,\ZZ} \\
% &= \dual{D_z\II(t,z_1(t)) - D_z\II(t,z_2(t))}{z_1^\prime(t)-z_2^\prime(t)}_{\ZZ^\ast,\ZZ} \\
&= \dual{D^2_z\II(t,z_1(t))[z_1(t)-z_2(t)]}{z_1^\prime(t)}_{\ZZ^\ast,\ZZ} 
- \dual{D^2_z\II(t,z_2(t))[z_1(t)-z_2(t)]}{z_2^\prime(t)}_{\ZZ^\ast,\ZZ} \\
&\quad + \dual{D_z\II(t,z_1(t)) - D_z\II(t,z_2(t))}{z_1^\prime(t)-z_2^\prime(t)}_{\ZZ^\ast,\ZZ} ,
\end{align*}
where we used the symmetry of $D_z^2\II$. Note that, due to the special structure of $\II$, the partial derivative w.r.t. $t$ is equal to zero. Rearranging terms, we arrive at
\begin{align*}
\dot{\gamma}(t) &= \dual{D^2_z\II(t,z_1(t))[z_1(t)-z_2(t)]+D_z\II(t,z_2(t))-D_z\II(t,z_1(t))}{z_1^\prime(t)}_{\ZZ^\ast,\ZZ} \\
&\quad - \dual{D^2_z\II(t,z_2(t))[z_1(t)-z_2(t)]+D_z\II(t,z_1(t))-D_z\II(t,z_2(t))}{z_2^\prime(t)}_{\ZZ^\ast,\ZZ} \\
&\quad + 2 \dual{D_z\II(t,z_1(t)) - D_z\II(t,z_2(t))}{z_1^\prime(t)-z_2^\prime(t)}_{\ZZ^\ast,\ZZ} \\
\end{align*} 
Now, due to $z_1,z_2 \in W^{1,\infty}([0,T];\ZZ)$ and the regularity on $\II(t,\cdot)$ (see \eqref{ass:regularity_I}), we find that 
\begin{align*}
\dot{\gamma}(t)
&\leq C \norm{z_1(t)-z_2(t)}_\ZZ^2 \norm{z_1^\prime(t)}_\ZZ 
+ C \norm{z_1(t)-z_2(t)}_\ZZ^2 \norm{z_2^\prime(t)}_\ZZ \\
&\quad + 2 \dual{D_z\II(t,z_1(t)) - D_z\II(t,z_2(t))}{z_1^\prime(t)-z_2^\prime(t)}_{\ZZ^\ast,\ZZ} \\
&\leq C \norm{z_1(t)-z_2(t)}_\ZZ^2 + 2 \dual{D_z\II(t,z_1(t)) - D_z\II(t,z_2(t))}{z_1^\prime(t)-z_2^\prime(t)}_{\ZZ^\ast,\ZZ}
\end{align*} 
which is the desired estimate.

\section{Existence and Uniqueness of differential solutions}\label{sec:diffexistence}
The statements of \cref{thm:reverse_approx_small} and \cref{thm:reverse_approx_general} each refer to the unique differential solution of \eqref{eq:subDiffInc}, which exists due to \citep[Thm. 7.4]{mielketheil}. However, in \citep{mielketheil}, the energy functional is assumed to be slightly more regular than as in \eqref{ass:regularity_I}. For completeness, we therefor bring together the necessary results from the literature to obtain the existence and uniqueness of differential solutions in our setting. 

\begin{theorem}\label{thm:app.existence_and_uniqueness}
Let $\II$ fulfill Assumption~\ref{ass:globconv}, i.e., it is $\kappa$-uniformly convex . 
Then there exists a unique differential solution $z \in W^{1,\infty}(0,T;\ZZ)$, i.e. it holds
\begin{equation}
0 \in \partial\RR(z^\prime(t)) + D_z\II(t,z(t)) \quad \faa t \in [0,T]. \label{eq:auxA11}
\end{equation}
\end{theorem}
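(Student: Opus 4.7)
The plan is to establish existence through a global incremental minimization scheme and uniqueness through the error measure $\gamma$ of Appendix \ref{sec:App.errorGamma} combined with a Gronwall-type inequality. The $\kappa$-uniform convexity plays a dual role: it guarantees a unique minimizer at each discrete step and it provides the lower bound on $\gamma$ that drives the Gronwall argument.

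For existence, I would introduce the equidistant partition $t_k = k\tau$ and the iterates
\[
 z_k \in \argmin_{z\in\ZZ}\{\II(t_k, z) + \RR(z - z_{k-1})\},
\]
which are uniquely determined by the $\kappa$-uniform convexity of $\II(t,\cdot)$ and the convexity and lower semicontinuity of $\RR$. From the Euler condition $-D_z\II(t_k, z_k) \in \partial\RR(z_k - z_{k-1}) \subset \partial\RR(0)$ one reads off both $\RR(z_k - z_{k-1}) = \langle -D_z\II(t_k,z_k),\, z_k-z_{k-1}\rangle$ and, by local stability of $z_{k-1}$, $\RR(z_k-z_{k-1}) \geq \langle -D_z\II(t_{k-1},z_{k-1}),\, z_k - z_{k-1}\rangle$. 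Subtracting, splitting the increment of $D_z\II$ in its time and spatial components, and invoking $\kappa$-uniform convexity and $|\ell|_{Lip}$ yields
\[
 \kappa\,\|z_k-z_{k-1}\|_\ZZ^2 \leq |\ell|_{Lip}\,\tau\,\|z_k-z_{k-1}\|_\VV \leq |\ell|_{Lip}\,\tau\,\|z_k-z_{k-1}\|_\ZZ,
\]
so that the piecewise affine interpolants $\hat z_\tau$ are uniformly bounded in $W^{1,\infty}(0,T;\ZZ)$. Passing to a weak-$\ast$ limit $z \in W^{1,\infty}(0,T;\ZZ)$ is then standard, and the compact embedding $\ZZ \embeds \VV$ together with the lower-order growth \eqref{eq:assuF} of $D_z^2\FF$ allows passage to the limit in the piecewise-constant form of the discrete inclusion, producing a differential solution in the sense of Definition \ref{def:diffsol}.

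For uniqueness, let $z_1, z_2 \in W^{1,\infty}(0,T;\ZZ)$ be two solutions with common initial state $z_0$ and set
\[
 \gamma(t) := \langle D_z\II(t,z_1(t)) - D_z\II(t,z_2(t)),\, z_1(t) - z_2(t)\rangle_{\ZZ^*,\ZZ}.
\]
By $\kappa$-uniform convexity, $\gamma \geq \kappa\|z_1 - z_2\|_\ZZ^2$ and $\gamma(0) = 0$. Appendix \ref{sec:App.errorGamma} then delivers
\[
 \dot\gamma(t) \leq C\,\|z_1 - z_2\|_\ZZ^2 + 2\,\langle D_z\II(t,z_1) - D_z\II(t,z_2),\, z_1^\prime - z_2^\prime\rangle_{\ZZ^*,\ZZ}.
\]
Since $-D_z\II(t,z_i) \in \partial\RR(z_i^\prime) \subset \partial\RR(0)$ and $\RR$ is positively $1$-homogeneous, one has $\RR(w) \geq \langle -D_z\II(t,z_i), w\rangle$ for every $w\in\ZZ$ together with $\RR(z_i^\prime(t)) = \langle -D_z\II(t,z_i), z_i^\prime(t)\rangle$ almost everywhere. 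Evaluating the inequality at $w = z_{3-i}^\prime$ and subtracting the corresponding equality for each $i\in\{1,2\}$ and summing shows that the cross term in $\dot\gamma$ is nonpositive. Combined with $\|z_1-z_2\|_\ZZ^2 \leq \gamma/\kappa$, this leaves $\dot\gamma \leq (C/\kappa)\gamma$, and Gronwall together with $\gamma(0) = 0$ forces $\gamma \equiv 0$, hence $z_1 \equiv z_2$.

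The principal obstacle is the uniform $\ZZ$-Lipschitz bound for the discrete iterates: while morally classical, it relies on carefully exploiting the subdifferential characterization of $\partial\RR(0)$ at two consecutive locally stable iterates together with the full strength of the $\kappa$-uniform convexity. A secondary subtlety is the passage to the pointwise inclusion in the limit, which uses that $D_z\II(\cdot,\hat z_\tau(\cdot))$ converges strongly in $L^1(0,T;\VV^*)$ thanks to the compact embedding $\ZZ\embeds\VV$ and the lower-order growth of $D_z\FF$. Once these points are addressed, both the existence compactness argument and the uniqueness Gronwall argument proceed along well-trodden paths.
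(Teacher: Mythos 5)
Your uniqueness argument is exactly the paper's: the same error measure $\gamma$, the same cross-testing of the two variational inequalities to kill the term involving $z_1^\prime-z_2^\prime$, the estimate from Appendix~\ref{sec:App.errorGamma}, the lower bound $\gamma\geq\kappa\|z_1-z_2\|_\ZZ^2$, and Gronwall. For existence, however, you take a genuinely different route. The paper does not construct the solution at all; it simply cites \citep[Cor.~3.4.6(i), Cor.~3.1.2, Thm.~3.4.4]{mielkeroubi} for existence and for the $W^{1,\infty}(0,T;\ZZ)$ regularity of every differential solution. You instead run the global incremental minimization scheme and pass to the limit. Your key discrete estimate is correct: combining the Euler identity $\RR(z_k-z_{k-1})=\dual{-D_z\II(t_k,z_k)}{z_k-z_{k-1}}$ with the local stability of $z_{k-1}$ (which holds inductively since $\partial\RR(z_k-z_{k-1})\subset\partial\RR(0)$, with the base case given by the assumption on $z_0$) and splitting the increment of $D_z\II$ into its time and state parts does yield $\kappa\|z_k-z_{k-1}\|_\ZZ\leq|\ell|_{Lip}\,\tau$, hence the uniform $W^{1,\infty}(0,T;\ZZ)$ bound. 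This self-contained construction buys transparency and makes visible exactly where $\kappa$-uniform convexity and the Lipschitz continuity of $\ell$ enter, at the cost of redoing what the cited references already provide.

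One step in your existence sketch is stated too loosely to stand as written: you cannot ``pass to the limit in the piecewise-constant form of the discrete inclusion'' directly, because the argument of the subdifferential, $z_k-z_{k-1}$, tends to zero while $\partial\RR$ is not continuous there. The standard repair, which your tools do support, is to split the inclusion using the $1$-homogeneity of $\RR$: $\xi\in\partial\RR(v)$ iff $\xi\in\partial\RR(0)$ and $\RR(v)=\dual{\xi}{v}$. The stability part $-D_z\II(t,z(t))\in\partial\RR(0)$ passes to the limit by the weak-$\ast$ closedness of $\partial\RR(0)$ together with the strong convergence of the forces that you invoke. The energy part requires integrating the discrete identity $\RR(z_k-z_{k-1})=\dual{-D_z\II(t_k,z_k)}{z_k-z_{k-1}}$ over time, using weak lower semicontinuity of $v\mapsto\int\RR(v)$ on the left and strong-times-weak convergence on the right to obtain $\int_s^t\RR(z^\prime)\,\d r\leq\int_s^t\dual{-D_z\II(r,z(r))}{z^\prime(r)}\,\d r$, which combined with the pointwise reverse inequality from the stability condition gives the a.e.\ identity and hence the differential inclusion. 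With that step made explicit, your construction is a complete and valid alternative to the paper's citation-based existence argument.
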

\begin{proof}
First of all, the existence of a differential solution satisfying $ z \in W^{1,\infty}(0,T;\ZZ)$ follows from \citep[Cor. 3.4.6(i)]{mielkeroubi} combined with \citep[Cor. 3.1.2]{mielkeroubi}. Moreover, since $\II(t,\cdot)$ is uniform convex, every differential solution has to fulfill $z \in W^{1,\infty}(0,T;\ZZ)$ as a result of \citep[Thm. 3.4.4]{mielkeroubi}  (with $\alpha=2$, $\beta=1$) and \citep[Cor. 3.4.6(i)]{mielkeroubi}. 
Now, let $z_1,z_2 \in W^{1,\infty}(0,T;\ZZ)$ be two differential solutions. We again define $\gamma(t) := \dual{D_z\II(t,z_1(t)) - D_z\II(t,z_2(t))}{z_1(t)-z_2(t)}$.  Since $z^\prime \in L^1([0,T];\ZZ)$, \eqref{eq:auxA11} is equivalent to 
\begin{equation}\label{eq:app.uniqueDiffSol_reformulations}
    \RR(z^\prime(t)) \geq \RR(v) + \dual{-D_z\II(t,z(t))}{v-z^\prime(t)}_{\ZZ^\ast,\ZZ} \quad \forall v \in \ZZ . 
\end{equation}
Testing this variational inequality for $z_1$ with $z_2$ and vice versa and adding up the resulting inequalities, we obtain 
\[ 0 \geq \dual{D_z\II(t,z_1(t))-D_z\II(t,z_2(t))}{z_1^\prime(t)-z_2^\prime(t)}_{\ZZ^\ast,\ZZ} . \]
Exploiting the estimate from Section~\ref{sec:App.errorGamma}, we thus have
$\dot{\gamma}(t) %\leq C \norm{z_1(t)-z_2(t)}_\ZZ^2 + 2 \dual{D_z\II(t,z_1(t)) - D_z\II(t,z_2(t))}{z_1^\prime(t)-z_2^\prime(t)}_{\ZZ^\ast,\ZZ} \\ 
\leq C \norm{z_1(t)-z_2(t)}_\ZZ^2$.
The $\kappa$-uniform convexity of $\II$ implies $\gamma(t) \geq \kappa \norm{z_1(t)-z_2(t)}_\ZZ^2$, so that $\dot{\gamma}(t) \leq C \gamma(t)$ and we obtain the uniqueness result by applying the Gronwall-Lemma. 
\end{proof}

%\section{Backtransformation of the solution in Thm. }\label{sec:App.backtransformation}
%In \cref{thm:reverse_approx_small} we constructed an inverse function $\hat{s}_\tau : [0,T] \mapsto [0,\hat{S}_\tau]$ in order to retransform the (discrete) parameterized solution back into the physical time. There we only stated the explicit form of this retransformed function, which will be verified in the next few lines:\\
%Let $\hat{t}_\tau : [0,\hat{S}_\tau] \mapsto	[0,T]$ with 
%\[ \hat{t}_\tau(s) = t_{k-1} + \frac{s-s_{k-1}}{s_k-s_{k-1}} (t_k-t_{k-1}) \in [t_{k-1},t_k) \]
%for $s \in [(k-1)\tau,k\tau)$. Thus we obtain
%\[ s(t) = s_{k-1} + (s_k-s_{k-1}) \frac{t-t_{k-1}}{t_k-t_{k-1}} \]
%for $t \in [t_{k-1},t_k)$. Inserting this into the definition of the affine interpolants in the artificial time (see \eqref{eq:affine-interpolants}) yields for $t \in [t_{k-1},t_k)$:
%\[ z_\tau(t) := \hat{z}_\tau(s_\tau(t)) = z_{k-1}^\tau + \frac{t-t_{k-1}^\tau}{t_k^\tau-t_{k-1}^\tau}(z_k^\tau - z_{k-1}^\tau) \]
%which is the asserted retransformation.

%\section{Estimate on $\ell$}
%\begin{align*}
%&\hspace*{-1cm}\vnorm{\left(\frac{\ell(t_k)-\ell(t_{k-1})}{t_k-t_{k-1}}\right)-\left(\frac{\ell(t_{k-1})-\ell(t_{k-2})}{t_{k-1}-t_{k-2}}\right)}_\VV \\
%&= \vnorm{\frac{1}{t_k-t_{k-1}}\int_{t_{k-1}}^{t_k} \ell^\prime(s) \d s - \frac{1}{t_{k-1}-t_{k-2}}\int_{t_{k-2}}^{t_{k-1}} \ell^\prime(s) \d s } \\
%&= \vnorm{\int_{0}^{1} \ell^\prime(t_{k-1}+s(t_k-t_{k-1})) - \ell^\prime(t_{k-2}+s(t_{k-1}-t_{k-2}) \d s } \\
%&\leq \int_{t_{k-2}}^{t_{k}} \norm{\ell^{\prime\prime}(r)} \d r
%\end{align*}
%\todo{Rausnehmen?}

\end{appendix}

\renewcommand{\refname}{References} % Literaturverzeichnis -> "Quellen" benennen
\bibliography{database-references}

\end{document}